\numberwithin{equation}{section}
\newtheorem{theorem}{Theorem}[section]
\newtheorem{prop}[theorem]{Proposition}
\newtheorem{lemma}[theorem]{Lemma}
\newtheorem{cor}[theorem]{Corollary}
\newtheorem{conj}[theorem]{Conjecture}
\theoremstyle{definition}
\newtheorem{definition}[theorem]{Definition}
\newtheorem{example}[theorem]{Example}
\newtheorem{remark}[theorem]{Remark}
\newcommand{\D}{\Delta}
\DeclareMathOperator{\coker}{coker}
\DeclareMathOperator{\im}{im}
\def\<{{\langle}}
\def\>{{\rangle}}
\def\a{{\alpha}}
\def\b{{\beta}}
\def\g{{\gamma}}
\def\Z{\mathbb Z}
\def\Q{\mathbb Q}
\def\R{\mathbb R}
\def\T{\mathbb T}
\def\C{\mathbb C}
\def\a{\alpha}
\def\s{\sigma}
\def\L{{\Lambda}}
\def\n{{\bf n}}
\def\m{{\bf m}}
\def\e{\epsilon}
\def\A{{\cal A}}
\begin{document}

\title{Dynamics of Twisted Alexander Invariants}

\author{Daniel S. Silver \and Susan G. Williams\thanks{Both authors partially supported by NSF grant
DMS-0706798.} \\ {\em
{\small Department of Mathematics and Statistics, University of South Alabama}}}

\maketitle 

\begin{abstract} The Pontryagin dual of the based Alexander module of a link twisted by a ${\rm GL}_N \Z$ representation is an algebraic dynamical system with an elementary description in terms of colorings of a diagram. Its topological entropy is the exponential growth rate of the number of torsion elements of twisted homology groups of abelian covers of the link exterior. 

Total twisted representations are introduced. The twisted Alexander polynomial obtained from any nonabelian parabolic ${\rm SL}_2{\mathbb C}$ representation of a 2-bridge knot group is seen to be nontrivial. The zeros of any twisted Alexander polynomial of a torus knot corresponding to a parabolic ${\rm SL}_2{\mathbb C}$ representation or a finite-image permutation representation are shown to be roots of unity.
  \noindent \end{abstract}

\noindent {\it Keywords:} Knot, twisted Alexander polynomial, Fox coloring, Mahler measure\begin{footnote}{Mathematics Subject Classification:  
Primary 57M25; secondary 37B40.}\end {footnote}

\section{Introduction} The Alexander polynomial $\D_k(t)$, the first knot polynomial, was a result of J.W. Alexander's efforts during 1920--1928 to compute torsion numbers $b_r$, the orders of the torsion subgroups of $H_1 M_r$, where $M_r$ is the $r$-fold cyclic cover of ${\mathbb S}^3$ branched over $k$. Here a combinatorial approach to a topological problem led to new algebraic invariants such as the Alexander module and its associated polynomials. 

As an abelian invariant, $\D_k$ can miss a great deal of information. For example, it is well known that there exist infinitely many nontrivial knots with trivial Alexander polynomial. In 1990 X.S.~Lin introduced a more sensitive invariant using information from nonabelian representations of the knot group \cite{lin}. Later, refinements of these twisted Alexander polynomials were described by M. Wada \cite{wada}, P. Kirk and C. Livingston \cite{kl}  and J. Cha
\cite{cha}.

We examine twisted Alexander modules from the perspective of algebraic dynamics. Pontryagin duality converts a finitely generated ${\mathbb Z}[t_1^{\pm 1}, \dots, t_d^{\pm 1}]$-module ${\cal M}$ into a compact abelian group $\hat {\cal M}= {\rm Hom}({\cal M}, \T)$, where $\T=\R/\Z$ is the additive circle group. Multiplication in ${\cal M}$ by $t_1, \ldots, t_d$ becomes $d$ commuting homeomorphisms of $\hat {\cal M}$, giving rise to a ${\mathbb Z}^d$-action $\s: {\mathbb Z}^d \to {\rm Aut}\ \hat {\cal M}$.  Dynamical invariants of $\s$ such as periodic point counts and topological entropy provide invariants of the module ${\cal M}$.

In previous work \cite{swTOP2}, the authors considered the case in which ${\cal M}$ is the (untwisted) based Alexander module ${\cal A}^0$ of a link $\ell$ of $d$ components and $t_1, \ldots, t_d$ correspond to meridians.  When the $d$-variable Alexander polynomial $\D_\ell$  is nonzero, its logarithmic Mahler measure coincides with the topological entropy 
$h(\s)$, a measure of the complexity of $\s$.  Furthermore, $h(\s)$ is seen to be the exponential growth rate of torsion numbers $b_\L$
associated to $\ell$, where $\L$ is a finite-index sublattice of ${\mathbb Z}^d$ expanding in all $d$ directions. 

Given a representation $\g$ from the link group $\pi= \pi_1({\mathbb S}^3 \setminus \ell)$ to ${\rm GL}_N \Z$, a based twisted Alexander module ${\cal A}^0_\g$ and twisted Alexander polynomial 
$\D_{\ell, \g}$ are defined for which analogous results hold. In particular, we give a homological interpretation for the Mahler measure of $\D_{\ell, \g}$. 

Representations to ${\rm GL}_N \Z$ arise naturally from parabolic ${\rm SL}_2{\mathbb C}$ representations as {\it total twisted} representations.  Parabolic representations of 2-bridge knot and link groups provide many examples.  We show that for any 2-bridge knot $k$ and nonabelian parabolic representation $\gamma$, the polynomial $\D_{k, \g}$ is nontrivial.

The dynamical approach here is natural for fibered knots.
In such a case, $\hat{\cal A}_\g^0$ is a finite-dimensional torus with an automorphism $\s$ determined by the monodromy. The homology eigenvalues of $\s$ are the zeroes of the twisted Alexander polynomial. We prove that any parabolic ${\rm SL}_2{\mathbb C}$ representation of a torus knot group yields a twisted Alexander polynomial with trivial Mahler measure. 

We are grateful to the Institute for Mathematical Sciences at Stony Brook University and the Department of Mathematics of the George Washington University for their hospitality and support during the fall of  2007, when much of this work was done. We thank Abhijit Champanerkar, Stefan Friedl, Jonathan Hillman, Paul Kirk,  Mikhail Lyubich and Kunio Murasugi for comments and suggestions. 

\section{Twisted dynamical colorings}

We describe twisted Alexander modules of knots and links using extended Fox colorings of diagrams. 
This unconventional approach is both elementary and compatible with the dynamical point of view that we wish to promote. 

We begin with the untwisted case for knots. Twisted homology will require only a minor modification. Later, we describe the changes that are needed for general links. 

Let ${\cal D}$ be a diagram of an oriented knot $k$ with arcs indexed by $\{0, 1,\ldots,q\}$. Recall that $\T = \R/\Z$ is the additive circle group. Regard its elements as ``colors." A {\it based dynamical coloring} of ${\cal D}$ is a labeling $\a$ of the diagram that assigns to the $i$th arc the {\it color sequence} $\alpha_i=(\alpha_{i,n})\in \T^\Z$ with  $\a_0=(\ldots, 0, 0, 0, \ldots)$
(see Figure \ref{crossing}). At each crossing, we require:
\begin{equation}\label{color1}\alpha_{i,n}+\alpha_{j,n+1}=\alpha_{k,n} + \alpha_{i, n+1}\end{equation}
for all $n\in\Z$, where the $i$th arc is the overcrossing arc and the $j$th and $k$th arcs are the left and right undercrossing arcs, respectively.  Equivalently:
$$\alpha_i+\sigma\alpha_j=\alpha_k+\sigma\alpha_i,$$
where $\s$ is the {\it shift map},  the left coordinate shift on the second subscript.

The collection of all based dynamical colorings is denoted 
by ${\rm Col}^0({\cal D})$. It is a closed subgroup of $(\T^\Z)^q\cong (\T^q)^\Z$, invariant under $\s$, which acts as a continuous automorphism. 
The pair $({\rm Col}^0({\cal D}), \s)$ is an algebraic dynamical system, a compact topological group and continuous automorphism, which we call the  {\it based coloring dynamical system} of ${\cal D}$. 

\begin{figure}
\begin{center}
\includegraphics[height=1.5 in]{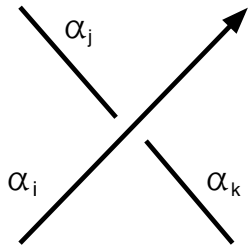}
\caption{Color sequences at a crossing}
\label{crossing}
\end{center}
\end{figure}

If ${\cal D}'$ is another diagram for $k$, then its associated dynamical system $({\rm Col}^0({\cal D}'), \s')$
is topologically conjugate to $({\rm Col}^0({\cal D}), \s)$ via a continuous isomorphism $h: {\rm Col}^0({\cal D}') \to {\rm Col}^0({\cal D})$ such that $\s \circ h = h \circ \s'$. This is the natural equivalence relation on algebraic dynamical systems. Hence $({\rm Col}^0({\cal D}), \s)$ is an invariant of $k$.

\begin{definition} The {\it based dynamical coloring system} $({\rm Col}^0(k), \s)$ of $k$ is $({\rm Col}^0({\cal D}), \s)$, where ${\cal D}$ is any diagram for $k$. \end{definition}

\begin{definition} An element $\a = (\a_{i, n})\in {\rm Col}^0(k)$ has {\it period} $r$ if $\s^r \a = \a$; equivalently, 
$\a_{i, n+r} = \a_{i, n}$ for all $i, n$. The set of all period $r$ elements is denoted by ${\rm Fix}\ \s^r$. \end{definition}

\begin{prop}\label{fixed} {\rm\cite{swTAMS}} For any knot, ${\rm Fix}\ \s$ is trivial. \end{prop}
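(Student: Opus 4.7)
The plan is to exploit the very strong constraint imposed by $\sigma$-fixedness: if $\a \in {\rm Fix}\ \s$, then $\a_{i,n+1} = \a_{i,n}$ for all $i,n$, so each color sequence $\a_i$ is the constant sequence with some value $c_i \in \T$. The problem thus collapses from sequences of colors to a single color per arc, and the question becomes whether the only constant coloring compatible with the crossing relation and the base condition $c_0 = 0$ is $c_i \equiv 0$.

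Substituting $\a_{i,n} = c_i$ into the crossing relation (\ref{color1}) yields
\begin{equation*}
c_i + c_j = c_k + c_i,
\end{equation*}
that is, $c_j = c_k$. In other words, the two undercrossing arcs at every crossing carry the same color, while the overcrossing arc is unconstrained. This is precisely the statement that as one traverses the knot, the color does not change when one passes under a crossing; and since the overcrossing arc also connects to itself with no change, the color is constant along the entire underlying knot circle.

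I would next argue that, because a knot is a single connected simple closed curve, its diagram consists of arcs that are all segments of one connected immersed circle in the plane, and every two arcs are connected by a path in the diagram passing through undercrossings only. Hence all the $c_i$ are equal. The normalization $c_0 = 0$ (coming from $\a_0 \equiv 0$ in the definition of a \emph{based} dynamical coloring) then forces $c_i = 0$ for every $i$, so $\a$ is trivial.

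The argument is essentially routine; the only subtle point is the combinatorial claim that the equivalence relation generated by ``$j \sim k$ whenever $j,k$ are the two undercrossing arcs at a common crossing'' identifies all arcs of a knot diagram. This is the one step to state carefully, and it is where knottedness (one component) rather than linkedness is used — for a link of several components, the relation would only equate arcs on each individual component, which is why the statement is restricted to knots.
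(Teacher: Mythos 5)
Your proof is correct, and since the paper simply cites \cite{swTAMS} for this fact without reproducing an argument, I can only evaluate the proposal on its own terms. The key steps are all sound: a $\s$-fixed coloring forces every color sequence to be constant, the crossing relation degenerates to $c_j = c_k$ where $j,k$ are the two undercrossing arcs, and for a one-component diagram the relation ``two arcs meet under a common crossing'' is exactly the adjacency relation one encounters when traversing the knot, so it identifies all arcs. Combined with the basing condition $c_0 = 0$, everything vanishes. You also correctly isolate the one place where knottedness (as opposed to linkedness) enters: for a multi-component link this relation only equates arcs within each component, and indeed ${\rm Fix}\ \s$ is nontrivial in general there. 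One remark worth making: the same conclusion also drops out of the module-theoretic picture the paper develops a bit later. By Proposition \ref{untwisted}, ${\rm Fix}\ \s$ is dual to $H_1X_\infty/(t-1)H_1X_\infty$, and for a knot $\D_k(1)=\pm 1$, so $t-1$ acts invertibly on the Alexander module and the quotient is trivial. Your combinatorial argument is more elementary and does not rely on that duality (which appears in the paper only after this proposition), so it is the more self-contained route in context.
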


If $\a = (\a_{i, n})\in {\rm Col}^0(k)$ has period 2, then $\a_i +\s \a_i$ is fixed by $\s$ for every $i$, and hence $\s \a_i = -\a_i$; that is, $\a_{i, n +1} = -\a_{i, n}$, for all $i, n$. Each $\a_{i, n}$ is determined by a single color $\a_{i, 0}\in \T$. The coloring condition (\ref{color1}) then becomes 
$$2 \a_{i,0} = \a_{j,0}+ \a_{k,0}.$$
Taking each $\a_{i,0}$ to be in $\Z/p$ for some positive integer $p$, 
the above becomes the well-known Fox $p$-coloring condition. Since each group $\Z/p$ embeds naturally in $\T$, we define a {\it Fox $\T$-coloring} to be any $\T$-coloring satisfying this condition. 
The group of Fox $\T$-colorings of a diagram modulo monochromatic colorings is  isomorphic to the group of period-2 points of ${\rm Col}^0(k)$.

For knots, all Fox $\T$-colorings are Fox $p$-colorings for some $p$, since $\D_k(-1)$ is nonzero and annihilates all period 2 points (see \cite{swTAMS}). 
It is well known that the group of Fox $p$-colorings of a diagram modulo monochromatic colorings is isomorphic to the homology $H_1(M_2; \Z/p)$ of the 2-fold cyclic cover with $\Z/p$ coefficients.

For each $r$, we denote by $X_r$ the $r$-fold cyclic cover of the knot exterior $X= {\mathbb S}^3\setminus k$. We denote its universal abelian cover by $X_\infty$, and regard $H_1 X_\infty$ as a finitely generated $\Z[t^{\pm 1}]$-module.  Since $H_1 X_r \cong H_1 M_r \oplus \Z$, the torsion subgroups of $H_1 X_r$ and $H_1 M_r$ are isomorphic, and we denote their order by $b_r$. We denote the rank of $H_1M_r$ (the dimension of the ${\mathbb Q}$-vector space $H_1 M_r \otimes {\mathbb Q}$) by  $\b_r$.

\begin{prop}\label{untwisted}  {\rm\cite{swTOP}} For any knot $k$, \begin{itemize}
\item[\rm(i)]  ${\rm Col}^0(k)$ is Pontryagin dual to $H_1 X_\infty$. The shift  $\s$ is dual to the meridian action of $t$. 

\item[\rm(ii)] ${\rm Fix}\ \s^r$ consists of $b_r$ tori, each of dimension $\b_r$.

\end{itemize}
\end{prop}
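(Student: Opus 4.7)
The plan is to use Pontryagin duality to translate (i) into a statement about $\Z[t^{\pm 1}]$-module homomorphisms from $H_1 X_\infty$ into $\T$, and (ii) into an identification of $H_1 X_\infty / (t^r - 1) H_1 X_\infty$ with $H_1 M_r$.

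For (i), I would begin with the Wirtinger presentation of $\pi_1(X)$. Applying Fox calculus to the Wirtinger relation $x_k = x_i^{-1} x_j x_i$ and abelianizing produces a $\Z[t^{\pm 1}]$-linear relation of the form $(t-1)x_i + x_j - tx_k = 0$ at each crossing. A $\Z$-linear homomorphism $\phi$ from the resulting module into $\T$ corresponds bijectively to an assignment $\a_{i,n} := \phi(t^n x_i)$, and the Fox relation translates (with left/right under-arc labels read according to crossing sign) into the coloring equation (\ref{color1}). The basing condition $\a_0 \equiv 0$ is equivalent to $\phi$ vanishing on the cyclic submodule $\langle x_0\rangle$; the quotient absorbs the extra $\Z[t^{\pm 1}]$-summand of the relative module $H_1(X_\infty, \tilde x_0)$ supported on the basepoint arc, leaving exactly $H_1 X_\infty$. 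Since $\s\a$ corresponds to the homomorphism $m \mapsto \phi(tm)$, the shift is dual to multiplication by $t$.

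For (ii), Pontryagin duality identifies ${\rm Fix}\ \s^r$ with the character group of the cokernel of $t^r-1$ acting on $H_1 X_\infty$, namely $H_1 X_\infty / (t^r - 1) H_1 X_\infty$. The Wang sequence for the infinite cyclic cover $X_\infty \to X_r$ with deck generator $t^r$ (using that $t^r$ acts trivially on $H_0 X_\infty \cong \Z$) then gives a short exact sequence
\begin{equation*}
0 \to H_1 X_\infty / (t^r - 1) \to H_1 X_r \to \Z \to 0
\end{equation*}
which splits since $\Z$ is free, and in which the $\Z$ quotient is generated by the class of the lifted meridian $\tilde \mu$ -- the loop in $\partial X_r$ that projects to $\mu^r$. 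To pass from $X_r$ to $M_r$ we glue in a solid torus $V$ whose meridian disk is bounded by $\tilde \mu$ (since the $r$-fold cyclic branched cover $D^2 \to D^2$, $z \mapsto z^r$, sends the boundary $S^1$ to $r\mu$). A Mayer-Vietoris computation then shows that gluing $V$ kills the $\Z\langle\tilde\mu\rangle$-summand (the $\Z$-summand of $H_1(V)$ is absorbed via the longitude relation identifying $\tilde\lambda$ with the core of $V$), yielding $H_1 M_r \cong H_1 X_\infty/(t^r - 1)$ as abelian groups.

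Finally, writing $H_1 M_r \cong \Z^{\b_r} \oplus T$ with $|T| = b_r$ via the structure theorem for finitely generated abelian groups, the Pontryagin dual is $\T^{\b_r} \oplus \widehat T$, which has $b_r$ connected components, each a torus of dimension $\b_r$. I anticipate the main technical subtlety to be verifying that the Wang splitting can be arranged with $[\tilde\mu]$ spanning the $\Z$-summand; concretely, the composition $H_1(\partial X_r) \to H_1 X_r \to \Z$ must send $\tilde\mu$ to a generator, which follows from comparing the Wang boundary map with the push-forward $H_1 X_r \to H_1 X \cong \Z$ (whose image is $r\Z$ and which carries $[\tilde\mu]$ to $r$). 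Everything else reduces to standard Pontryagin duality and covering-space arguments.
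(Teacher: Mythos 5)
Your argument is correct, and it follows the same route the paper itself takes in the twisted setting: part (i) mirrors the proof of Theorem \ref{dual} (dualize the based Alexander module, read off the coloring relation from the Fox/Wirtinger relations, and use $\A^0\cong H_1(X_\infty, m_\infty)\cong H_1X_\infty$ for a knot, as in Proposition \ref{basedalex}); part (ii) mirrors the component-count argument surrounding Corollary \ref{components}, using the Wang sequence for $X_\infty\to X_r$ together with the standard Mayer--Vietoris passage from $X_r$ to $M_r$. The paper cites \cite{swTOP} for Proposition \ref{untwisted} rather than reproving it, but your proof is a faithful reconstruction of that argument.
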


\begin{definition} \label{mahler} (1) The {\it logarithmic Mahler measure}  of a nonzero polynomial $p(t) = c_s t^s + \ldots +  c_1 t + c_0 \in {\mathbb C}[t]$
is 
$$m(p) = \log |c_s| + \sum_{i=1}^s \max \{ \log |\lambda_i|, 0 \},$$
where $\lambda_1, \ldots, \lambda_s$ are the roots of $p(t)$. \smallskip

(2) More generally, if $p(t_1, \ldots, t_d)$ is a nonzero polynomial in $d$ variables, then 
\begin{equation}\label{integral} m(p)= \int_0^1\cdots \int_0^1 \log|p(e^{2 \pi i \theta_1}, \ldots, e^{2 \pi i \theta_d})|\ d\theta_1\cdots d\theta_d.\end{equation}

\end{definition} 

\begin{remark} The integral in Definition \ref{mahler} can be singular, but nevertheless converges. The agreement between (1) and (2) in the case that $d=1$ is assured by Jensen's formula
(see \cite{ew} or \cite{schmidt}).
\end{remark} 

\begin{prop} \label{entropy} {\rm\cite{swTOP2}} For any knot $k$, the topological entropy $h(\s)$ is given by 
$$h(\s)=\lim_{r \to \infty} {1 \over r} \log b_r = m(\D_k(t)).$$
\end{prop}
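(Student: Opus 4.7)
The plan is to combine Proposition \ref{untwisted} with standard results from the algebraic dynamics of $\Z$-actions. Part (i) of that proposition identifies $({\rm Col}^0(k),\s)$ with the Pontryagin dual of $(H_1 X_\infty,\, t\cdot)$, and since $k$ is a knot, $H_1 X_\infty$ is a finitely generated torsion $\Z[t^{\pm 1}]$-module whose order ideal is generated by $\D_k(t)$. Both required equalities will then reduce to well-understood computations on the dual of such a module.

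For $h(\s)=m(\D_k)$, I would choose a square presentation matrix $A(t)$ for $H_1 X_\infty$, for instance a reduced Alexander matrix obtained from Fox calculus on the Wirtinger presentation of $\pi_1 X$, so that $\det A(t)=\pm t^j\D_k(t)$ for some $j$. The dual system then realizes $({\rm Col}^0(k),\s)$ as the closed shift-invariant subgroup $\{\alpha\in(\T^n)^\Z : A^T(\s)\alpha=0\}$ of a full shift. The Lind--Schmidt--Ward formula for topological entropy of algebraic $\Z$-actions (equivalently, the one-variable case of the Yuzvinskii addition formula) identifies the entropy of this system with $m(\det A)=m(\D_k)$. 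The Remark following Definition \ref{mahler} ensures that $m(\D_k)$ is finite even when $\D_k$ has roots on the unit circle.

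For $h(\s)=\lim_{r\to\infty}\frac{1}{r}\log b_r$, I would use part (ii) of Proposition \ref{untwisted}: ${\rm Fix}\ \s^r$ consists of $b_r$ tori. Dualizing, $b_r$ is the order of the torsion subgroup of $H_1 X_\infty/(t^r-1)H_1 X_\infty$, and Fox's classical formula gives
$$b_r \;=\; \prod_{\zeta^r=1,\ \D_k(\zeta)\ne 0}|\D_k(\zeta)|$$
(with the obvious adjustment when $\D_k$ vanishes at some $r$th root of unity). The required equality then follows from the Riemann-sum identity
$$\lim_{r\to\infty}\frac{1}{r}\sum_{\zeta^r=1}\log|\D_k(\zeta)| \;=\; \int_0^1\log|\D_k(e^{2\pi i\theta})|\,d\theta \;=\; m(\D_k).$$

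The main obstacle, and the only genuinely delicate step, is justifying this last limit when $\D_k$ has zeros on the unit circle. If any such zero is a root of unity, individual terms of the Riemann sum blow up and $\beta_r$ becomes positive for the affected values of $r$, which simultaneously perturbs both the dimension count of the tori in ${\rm Fix}\ \s^r$ and the Fox formula for $b_r$. This is handled by a convergence result of Lind showing that, after omitting the finitely many bad terms arising from each cyclotomic factor of $\D_k$, the Riemann sum still converges to $m(\D_k)$; the same bookkeeping reconciles the counting of torsion on the module side with the counting of connected components on the dual side.
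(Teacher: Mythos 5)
Since Proposition \ref{entropy} is cited from \cite{swTOP2} and the present paper gives no proof, the natural benchmark is the Remark that follows it, and your sketch matches that outline: duality via Proposition \ref{untwisted}, the Lind--Schmidt--Ward entropy formula, and the periodic-point count reduced to a Riemann sum that converges to $m(\D_k)$ only after the delicate near-unit-circle analysis. Two small corrections. First, ``the one-variable case of the Yuzvinskii addition formula'' is not the same thing as the LSW entropy formula $h(\s)=m(\det A)$; the addition formula (Theorem 14.1 in \cite{schmidt}, used later in the paper) concerns entropy in short exact sequences, while the entropy-equals-Mahler-measure statement is Yuzvinskii's separate theorem on solenoid automorphisms, generalized by LSW to $\Z^d$-actions. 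Second, your diagnosis of the obstacle is narrower than the actual difficulty: cyclotomic factors of $\D_k$ produce the finitely many exact zeros (and the positive $\b_r$) that you omit from the sum, but a knot polynomial may also have non-cyclotomic zeros on the unit circle (e.g.\ factors of Lehmer type), and then the issue is not that finitely many terms vanish but that infinitely many terms $\log|\D_k(\zeta)|$ approach $-\infty$ as $r$th roots of unity drift near those zeros; controlling those contributions is exactly what requires the Baker-type Diophantine estimates underlying Lind's convergence result and Schmidt's Theorem 21.1. Your proposal reaches for the right machinery, so this is a matter of stating the obstacle precisely rather than a gap in strategy.
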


\begin{remark} Proposition \ref{entropy} was proved earlier for the subsequence of torsion numbers $b_r$ for which $\b_r \ne 0$ by 
R. Riley \cite{riley2} and also F. Gonz\'alez-Acu\~na and H. Short \cite{gs}. The authors showed that one need not skip over torsion numbers for which $\b_r$ vanishes. They generalized the result by showing 
that the logarithmic Mahler measure of any nonvanishing multivariable Alexander polynomial $\D_\ell(t_1, \ldots, t_d)$ of a $d$-component link is again a limit of suitably defined torsion numbers. Both generalizations require a deep theorem from algebraic dynamics,  Theorem 21.1 of K. Schmidt's monograph \cite{schmidt},  which extends a theorem of D. Lind,  Schmidt and T. Ward \cite{lsw}.  \end{remark}

Consider now a linear representation $\g: \pi = \pi_1({\mathbb S}^3 \setminus k) \to {\rm GL}_N \Z$. Let $x_i$ denote the meridian generator of $\pi$ corresponding to the $i$th arc of a diagram ${\cal D}$ for $k$. We let $X_i$ denote the image matrix $\g(x_i)$. The representation $\g$ induces an action of  
$\pi$ on the $N$-torus $\T^N$.  Our {\it colors} will now be elements of $\T^N$.

\begin{definition} A {\it based $\g$-twisted dynamical coloring} of ${\cal D}$ is a labeling $\a$ of the arcs of ${\cal D}$ by {\it color sequences} $\alpha_i=(\alpha_{i,n})
\in (\T^N)^\Z$, with the 0th arc labeled by the sequence of zero vectors. The crossing condition (\ref{color1}) is replaced by:
$$\alpha_{i,n}+X_i\ \alpha_{j,n+1}=\alpha_{k,n} + X_k\ \alpha_{i, n+1}.$$
We will write this as
$$\alpha_i+X_i\ \sigma\alpha_j=\alpha_k+X_k\ \sigma\alpha_i,$$
where again $\s$ is the {\it shift map}, the coordinate shift on the $\Z$-coordinates, and $X \a_i = (X \a_{i, n})$.
\end{definition}

\begin{remark} More generally, one can consider representations $\g$ from $\pi$ to ${\rm GL}_NR$, where $\Z\subseteq R \subseteq \Q$. In this case, $\T^N$ is replaced by 
the $N$-dimensional solenoid $\hat R^N$, where $\hat R = {\rm Hom}(R, \T)$. We will say more about this  below. \end{remark}

The generalization from knots to links is natural. Let $\ell = \ell_1 \cup \cdots \cup \ell_d$ be an oriented link of $d$ components. Let ${\cal D}$ be a diagram. The abelianization $\epsilon: \pi \to \Z^d$ takes the meridian generator $x_i$ corresponding to the $i$th arc of ${\cal D}$ to a standard basis vector $\epsilon(x_i)\in \{e_1, \ldots, e_d\}$ for $\Z^d$. 

A based $\g$-twisted dynamical coloring of a diagram ${\cal D}$ is a labeling $\a$ of arcs by $\a_i =(\a_{i, \n}) \in (\T^N)^{\Z^d}$, where $\n=n_1 e_1+ \cdots + n_d e_d \in \Z^d$. For $\m \in \Z^d$, we write $\s_\m$ for the coordinate shift by $\m$ such that $\s_\m \a_i$ has $\n$th coordinate equal to $\a_{i, \n + \m}$. As before, the 0th arc is labeled with the zero sequence. At each crossing we require
\begin{equation}\label{coloring}\alpha_{i,\n}+X_i\ \alpha_{j,\n+{\epsilon (x_i)}}=\alpha_{k,\n} + X_k\ \alpha_{i, \n+{\epsilon (x_k)}}.\end{equation}
The crossing condition can be written more compactly as:
$$\alpha_i+X_i\ \sigma_{\e(x_i)}\alpha_j=\alpha_k+X_k\ \sigma_{\e(x_k)}\alpha_i.$$
The collection ${\rm Col}_\g^0({\cal D})$ of all based $\g$-twisted dynamical colorings is again a compact space, this time invariant under the $\Z^d$-action $\s=(\s_\n)$, generated by commuting automorphisms $\s_{e_1}, \ldots, \s_{e_d}$.

\begin{prop} \label{twist} Let $\ell$ be any oriented link. Up to topological conjugacy by a continuous group isomorphism, the dynamical system  $({\rm Col}^0_\g({\cal D}), \s)$ is independent of the diagram ${\cal D}$ for $\ell$. \end{prop}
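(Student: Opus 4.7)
The plan is to verify that $({\rm Col}^0_\g({\cal D}), \s)$ is preserved by planar isotopy and by the three Reidemeister moves; this suffices since any two diagrams of $\ell$ are connected by such moves. Planar isotopy leaves the combinatorial pattern of arcs and crossings unchanged, hence preserves the coloring system on the nose. For each Reidemeister move ${\cal D} \leftrightarrow {\cal D}'$, I construct a continuous group isomorphism $h: {\rm Col}^0_\g({\cal D}') \to {\rm Col}^0_\g({\cal D})$ commuting with the $\Z^d$-actions, i.e., $\s_\m \circ h = h \circ \s'_\m$ for every $\m \in \Z^d$.

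The construction is local. Outside the small disk $D$ in which the move is performed, arcs of ${\cal D}$ and ${\cal D}'$ correspond bijectively, and $h$ assigns identical color sequences to corresponding arcs. For arcs inside $D$, the crossing relations (\ref{coloring}) allow one to solve uniquely for the color sequences of the newly introduced arcs in terms of the boundary data, and vice versa. The base (0th) arc may be chosen to lie outside $D$, since each move involves only finitely many arcs, so the condition $\a_0 = 0$ is automatically preserved. Because (\ref{coloring}) is shift-equivariant in both diagrams, $h$ commutes with the shifts. Verifying R1 and R2 is a short algebraic exercise: in each case the number of new crossing relations equals the number of new arcs, and one checks that the system has a unique solution that imposes no condition on the colors entering the disk.

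The main obstacle is R3, where three crossings are rearranged among three strands without changing the number of arcs; one must show that two seemingly different systems of crossing relations cut out the same closed subgroup of the arc-indexed product of $(\T^N)^{\Z^d}$. Rather than computing directly, I would pass to the module picture: the coloring group ${\rm Col}^0_\g({\cal D})$ is Pontryagin dual to a based twisted Alexander module presented as a $\Z[\Z^d]$-module with one generator per non-base arc and one relation per crossing, the relations being the Fox-calculus derivatives of the Wirtinger relations evaluated through $\g$ and the abelianization $\e$. Reidemeister moves induce Tietze transformations on the Wirtinger presentation, hence isomorphisms of this module that commute with the meridian action. Dualizing then yields the required topological conjugacy $h$, and the R3 check reduces to the well-known invariance of the Wirtinger presentation under the Tietze transformation corresponding to R3.
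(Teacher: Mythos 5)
Your plan follows the paper's own sketch: the text immediately after Proposition \ref{twist} remarks that it "can be proven in a straightforward manner by checking invariance under Reidemeister moves" and that, "alternatively, it follows from Theorem \ref{dual}." You are proposing a hybrid of these two routes: R1 and R2 directly at the coloring level, R3 via the dual module and Tietze transformations. This is sound, but somewhat inefficient: once you have observed (as in the proof of Theorem \ref{dual}, which is established for a fixed diagram and does not presuppose Proposition \ref{twist}) that ${\rm Col}^0_\g({\cal D})$ is Pontryagin dual to the module presented by the Fox--Wirtinger relations, there is nothing special about R3. All three moves translate into Tietze transformations fixing the base generator $x_0$, and one uniform module-level argument dispatches them; the separate coloring-level check of R1 and R2 becomes redundant. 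One step to be more careful about: your remark that "the base (0th) arc may be chosen to lie outside $D$" takes care of an individual move, but it tacitly uses the fact that replacing the base arc by another arc of the same component does not change the conjugacy class of $({\rm Col}^0_\g,\s)$. That fact is true, but it is not a freebie of Reidemeister bookkeeping; it is most transparently seen via the paper's alternative route, where Proposition \ref{basedalex} identifies ${\cal A}_\g^0$ with $H_1(X',m';V)$, which depends only on the isotopy class of the meridian $m$ in the exterior, i.e., only on the component of $\ell$ it encircles. Spelling that out (or simply proving the proposition entirely through the duality and Proposition \ref{basedalex}) would close the small gap in the base-point handling.
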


 We denote this twisted coloring system by ${\rm Col}^0_\g(\ell)$, or ${\rm Col}^0_\g$ when the link $\ell$ is understood. 

Proposition \ref{twist} can be proven in a straightforward manner by checking invariance under  Reidemeister moves.   Alternatively, it follows from Theorem \ref{dual}.

The {\it topological entropy} $h(\s)$ of a $\Z^d$-action $\s: \Z^d \to {\rm Aut}\ Y$, where $Y$ is a compact metric space and $\s$ is a homomorphism to its group of homeomorphisms,  is a measure of its complexity. We briefly recall a definition from \cite{schmidt}, referring the reader to that source for details. If ${\cal U}$ is an open cover of  $Y$, denote the minimum cardinality of a  subcover by $N({\cal U})$. If $Q= \prod_{j=1}^d \{b_j, \ldots, b_j + l_j -1\} \subseteq \Z^d$ is a rectangle, let $|Q|$ be the cardinality of $Q$. Finally, let $\<Q\> = \min_{j=1, \ldots, d} l_j$. Then
$$h(\s)= \sup_{\cal U} h(\s, {\cal U}),$$
where ${\cal U}$ ranges over all open covers of $Y$, and 
$$h(\s, {\cal U}) = \lim_{\<Q\> \to \infty} {1 \over |Q|} \log N(
\vee_{\n \in Q}\ \s_{-\n}({\cal U})).$$ 
Here $\vee$ denotes common refinement of covers. Existence of the limit is guaranteed by subadditivity of $\log N$; that is,  $\log N({\cal U} \vee {\cal V}) \le \log N({\cal U}) + \log N({\cal V})$ for all open covers ${\cal U}, {\cal V}$ of $Y$.


\section{Twisted homology and colorings}  \label{twistedhomology}

Let $\pi = \< x_0, \ldots, x_q \mid r_1, \ldots, r_q \>$ be a Wirtinger presentation of $\pi = \pi({\mathbb S}^3 \setminus \ell)$, where $\ell = \ell_1 \cup \cdots \cup \ell_d$ is an oriented link, and $x_0$ is a meridian of $\ell_1$. 

As above, let $\e: \pi \to \Z^d$ be the abelianization homomorphism. We will make frequent use of the natural isomorphism from the free module $\Z^d$ to the {\sl multiplicative} free abelian group 
$\< t_1, \ldots, t_d \mid [t_i, t_j] \>$, identifying  $\n= (n_1, \ldots, n_d) \in \Z^d$  with the monomial $t^\n = t_1^{n_1}\cdots t_d^{n_d}$. 

Assume that $\g: \pi \to {\rm GL}_N R$ is a representation, where  $R$ is a Noetherian UFD. Then $V = R^N$ is a right $R[\pi]$-module. 
We regard $R[\Z^d]$ as the ring of Laurent polynomials in $t_1, \ldots, t_d$ with coefficients in $R$. 
We give $R[\Z^d] \otimes_R V$ the structure of a  right $R[\pi]$-module via
\begin{equation}\label{action} (p \otimes v)\cdot g = (pt^{\e(g)} ) \otimes (v \g(g))\ \forall g \in \pi.\end{equation}

We denote the exterior of $\ell$,  the closure of ${\mathbb S}^3$ minus a regular neighborhood of $\ell$, by $X$. Its universal cover will be denoted by $\tilde X$, and its  universal abelian cover by $X'$.

Let $C_*(\tilde X)$ denote the cellular chain complex of $\tilde X$ with coefficients in $R$. It is a free left $R[\pi]$ module, with basis obtained by choosing a  lifting for each cell of $X$. For purposes of calculation it is convenient to collapse $X$ to a $2$-complex with a single $0$-cell $*$, $1$-cells $x_0, \ldots, x_q$ corresponding to the generators of $\pi$, and $2$-cells $r_1, \ldots, r_q$ corresponding to the relators. We denote the chosen lifted cells
in $\tilde X$ by the same symbols. 

We consider the $\g$-twisted chain complex $C_*(X'; V_\g)$ of $R[\Z^d]$-modules
\begin{equation} \label{chaincplx} C_*(X'; V_\g) = (R[\Z^d]\otimes_RV) \otimes_{R[\pi]} C_*(\tilde X).\end{equation}
When it is clear what representation is being considered, we will shorten the notation $V_\g$ to $V$.  

The $\g$-twisted chain groups $C_*(X'; V)$, and hence the $\g$-twisted homology groups $H_*(X'; V)$, are finitely generated  $R[\Z^d]$-modules. The homology depends on the representation only up to conjugacy. The action of $\Z^d$ is given by 
$$t^{\n} \cdot (1 \otimes v \otimes z) =  t^{\n}   \otimes v \otimes z.$$

\begin{remark} \label{shapiro} Notation for twisted homology varies widely among authors. We have attempted to keep ours as uncluttered as possible. The notational reference to $X'$ is justified as follows. The representation $\g: \pi \to {\rm GL}_NR$ restricts to a representation of the commutator subgroup $\pi'$. Then $V$ is a right $R[\pi']$-module. Shapiro's Lemma (see \cite{brown}, for example) implies that $H_*(X'; V)$ is isomorphic to the homology groups resulting from the twisted chain complex $V \otimes_{R[\pi']} C_*(X')$.  
\end{remark}

For  $0 \le i \le q$, 
\begin{equation*}\begin{split}
\partial_1 (1 \otimes v \otimes x_i)& =  1\otimes v \otimes \partial_1 x_i
=1\otimes v \otimes (x_i\smash* -\, *)\\
& = 1\otimes v \otimes x_i\smash * -\ 1 \otimes v \otimes *.
\end{split}\end{equation*}
Using the $\pi$-action given by (\ref{action}), we can write
$$\partial_1 (1 \otimes v \otimes x_i) = t^{\e(x_i)}  \otimes vX_i \otimes * - 1 \otimes v \otimes *.$$
(We remind the reader that $X_i$ denotes $\g(x_i)$.)

If $r= x_i x_jx_i^{-1}x_k^{-1}$ is a Wirtinger relator, then one checks in a similar fashion that
$\partial_2 (1 \otimes v \otimes r)$ is given by 
\begin{equation*}\begin{split}
1 \otimes v \otimes x_i &+ t^{\e(x_i)} \otimes v X_i\otimes x_j - t^{\e(x_ix_jx_i^{-1})} \otimes v X_iX_jX_i^{-1} \otimes x_i  \\
&-t^{\e(x_ix_jx_i^{-1}x_k^{-1})} \otimes v X_iX_jX_i^{-1}X_k^{-1} \otimes x_k. 
\end{split}\end{equation*}
Since $x_ix_jx_i^{-1}x_k^{-1}$ is trivial in $\pi$, we can express the 
relation
$\partial_2 (1 \otimes v \otimes r) =0$ more simply: 
\begin{equation}\label{relations} 1 \otimes v \otimes x_i + t^{\e(x_i)} \otimes v X_i \otimes x_j = 1 \otimes v \otimes  x_k +t^{\e(x_k)} \otimes v  X_k \otimes x_i . \end{equation}

An elementary ideal $I_{\ell, \g}$ and {\it $\g$-twisted Alexander polynomial} $\D_{\ell, \g}$ are defined for $H_1(X'; V)$. The ideal $I_{\ell, \g}$ is generated by determinants of all maximal square submatrices of any presentation matrix, while $\D_{\ell, \g}$ is the greatest common divisor of the determinants, well defined up to multiplication by a unit in $R[\Z^d]$. The independence of $I_{\ell, \g}$ and $\D_{\ell, \g}$ of the choice of presentation matrix is well known
(see \cite{cf}, p. 101, for example). \bigskip

Following \cite{kl}, we write the chain module $C_1(X'; V) = W \oplus Y$, where 
$W$, $Y$ are freely generated by lifts of $x_0$ and $x_1,\ldots, x_q$, respectively. Our complex of $\g$-twisted chains can be written:
\begin{equation} \label{complex} 0 \to C_2(X'; V) \ {\buildrel \partial_2=a \oplus b \over \longrightarrow}\ 
W \oplus Y \ {\buildrel \partial_1= c+d \over \longrightarrow} \ C_0(X'; V)\to 0.\end{equation}

\begin{definition} The $\g$-{\it twisted Alexander module} ${\cal A}_\g$ is the cokernel of $\partial_2$. The {\it based} $\g$-{\it twisted Alexander module} ${\cal A}^0_\g$ is the cokernel of $b$. \end{definition}

Let $m$ be a meridianal link about the component $\ell_0$. We can regard $m$ as a subspace of the link exterior $X$, and the inclusion map induces an injection on fundamental groups. The representation 
$\g$ restricts, and we can consider the relative $\g$-twisted homology
$H_1(X', m'; V)$, where $m'$ is the lift of $m$ to the infinite cyclic cover $X'$. We can assume that cell complexes for $X$ and $m$ both have a single, common 0-cell. From the relative chain complex $C_*(X', m'; V) = C_*(X'; V)/C_*(m'; V)$, we  obtain the following homological interpretation of ${\cal A}_\g^0$. 

\begin{prop} \label{basedalex} The based $\g$-twisted Alexander module ${\cal A}_\g^0$ is isomorphic to $H_1(X', m'; V)$. \end{prop}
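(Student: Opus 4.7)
The plan is to compute $H_1(X', m'; V)$ directly from the relative cellular chain complex. Equip $X$ with the Wirtinger $2$-complex structure fixed before~(\ref{chaincplx}) (single $0$-cell $*$, $1$-cells $x_0, \ldots, x_q$, and $2$-cells $r_1, \ldots, r_q$), and realize $m$ as the subcomplex consisting of the single $0$-cell $*$ together with the $1$-cell $x_0$, carrying no $2$-cells. The inclusion $m \hookrightarrow X$ is then cellular and $\pi_1$-injective, and the representation $\g$ restricts to a representation of $\pi_1(m) = \<x_0\>$.

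At the chain level, $C_*(m'; V) = (R[\Z^d] \otimes_R V) \otimes_{R[\pi]} C_*(\tilde m)$ is a subcomplex of $C_*(X'; V)$. In degree $0$ the inclusion is an isomorphism, since both sides are free of rank one over $R[\Z^d]$ on the generator $1 \otimes v \otimes *$. In degree $1$, $C_1(m'; V)$ is the free summand generated by $1 \otimes v \otimes x_0$, which is precisely the factor $W$ of $W \oplus Y$. In degree $2$ the subcomplex is zero because $m$ has no $2$-cells. Passing to the quotient yields the two-term relative complex
$$C_2(X'; V) \ {\buildrel \bar\partial_2 \over \longrightarrow}\ Y,$$
concentrated in chain degrees $2$ and $1$, with $C_0(X', m'; V) = 0$.

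By construction, the relative boundary $\bar\partial_2$ is obtained from $\partial_2 = a \oplus b$ by composing with the projection $W \oplus Y \to Y$, and that composite is exactly $b$. Similarly $\bar\partial_1$ lands in $0$ and is trivial. Hence
$$H_1(X', m'; V) = Y/\im(b) = \coker(b) = {\cal A}_\g^0,$$
which is the desired identification.

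The argument is essentially bookkeeping once the cell structures are aligned; the one point requiring care is verifying that $C_*(m'; V)$ embeds in $C_*(X'; V)$ as described, which follows from the compatibility of the chosen lifts of $*$ and $x_0$ with the twisted $R[\pi]$-action~(\ref{action}). I anticipate no substantive obstacle beyond this verification.
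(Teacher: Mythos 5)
Your proof is correct and follows essentially the same route the paper sketches: after aligning cell structures so that $m$ is the subcomplex $\{*, x_0\}$, the subcomplex $C_*(m';V)$ contributes $W$ in degree one and all of $C_0$ in degree zero, so the relative complex collapses to $C_2(X';V)\xrightarrow{\,b\,}Y$ and $H_1(X',m';V)=\coker b={\cal A}_\g^0$. The paper states this without a formal proof, merely pointing to the relative chain complex $C_*(X',m';V)=C_*(X';V)/C_*(m';V)$, so your write-up simply supplies the bookkeeping the authors left implicit.
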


Although ${\cal A}_\g^0$ is an invariant of the (ordered) link by Proposition \ref{basedalex}, its dependence on the choice of component $\ell_0$ is critical. 

\begin{definition} The {\it  $\g$-twisted coloring polynomial} $D_{\ell, \g}$  is 0th elementary divisor $\D_0({\cal A}_\g^0)$. 
\end{definition}

Since ${\cal A}_\g^0$ has a square matrix presentation, $D_{\ell, \g}$ is relatively easy to compute. The quotient $D_{\ell, \g}/ {\rm det}(t^{\e(x_0)} X_0 -I)$, denoted by 
$W_{\ell, \g}$, is an invariant  introduced by M. Wada  \cite{wada}. It is well defined up to multiplication by a unit in $R[\Z^d]$. Wada's invariant is defined generally for any finitely presented group, given a homomorphism onto the integers and a linear representation. 

\begin{theorem}\label{dual} Assume that $\Z \subset R \subset \Q$.
The dual group  $\hat A_\g^0$ is topologically conjugate to the based coloring dynamical system ${\rm Col}^0_\g(\ell)$. 

\end{theorem}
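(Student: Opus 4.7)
The plan is to apply Pontryagin duality directly to the defining presentation ${\cal A}_\g^0 = \coker(b)$, where $b: C_2(X'; V) \to Y$. Since Pontryagin duality is an exact contravariant functor on locally compact abelian groups, $\hat{\cal A}_\g^0 \cong \ker(\hat b: \hat Y \to \widehat{C_2(X'; V)})$, and the strategy is to identify $\hat Y$ with the space of $q$-tuples of color sequences and show that the kernel of $\hat b$ is cut out by precisely the twisted crossing condition (\ref{coloring}).

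First I would unwind $\hat Y$. As an $R[\Z^d]$-module, $Y \cong (R[\Z^d] \otimes_R V)^q$, with one summand per arc $x_1, \dots, x_q$, and each summand is $R$-free with basis $\{t^\n \otimes e_j\}_{\n \in \Z^d,\,1\le j\le N}$. Its Pontryagin dual is therefore $(\hat R^N)^{\Z^d}$. Consequently an element $\phi \in \hat Y$ is specified by a $q$-tuple $(\alpha_1,\ldots,\alpha_q)$ with $\alpha_i = (\alpha_{i,\n}) \in (\hat R^N)^{\Z^d}$, via $\phi(t^\n \otimes v \otimes x_i) = \langle v, \alpha_{i,\n}\rangle$, using the canonical pairing $R^N \times \hat R^N \to \T$. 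Appending $\alpha_0 \equiv 0$ records the ``based'' convention, matching the fact that $x_0$ does not appear among the generators of $Y$.

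Next I would compute the kernel. Since $C_2(X'; V)$ is generated over $R[\Z^d]$ by the classes $1 \otimes v \otimes r$, the condition $\phi \in \ker\hat b$ becomes $\phi(b(t^\m \otimes v \otimes r)) = 0$ for every $v \in V$, $\m \in \Z^d$, and Wirtinger relator $r$. Substituting the boundary formula (\ref{relations}) for a crossing with over-arc $x_i$ and under-arcs $x_j, x_k$, and exploiting the adjoint identity $\langle v X, \alpha\rangle = \langle v, X^{\mathrm{t}}\alpha\rangle$ (where $X^{\mathrm{t}}$ denotes the contragredient action of $X$ on $\hat R^N$), this reduces to
$$\langle v,\ \alpha_{i,\m} + X_i^{\mathrm{t}}\alpha_{j,\m+\e(x_i)} - \alpha_{k,\m} - X_k^{\mathrm{t}}\alpha_{i,\m+\e(x_k)}\rangle = 0,$$
which, as $v$ ranges over a basis of $R^N$, is exactly the twisted crossing condition (\ref{coloring}). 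Hence $\phi \mapsto (\alpha_i)$ is a continuous group isomorphism $\hat{\cal A}_\g^0 \to {\rm Col}^0_\g({\cal D})$. Inspection of the pairing shows that multiplication by $t^\n$ on ${\cal A}_\g^0$ is intertwined with the shift $\s_\n$ on color sequences, so the isomorphism is a topological conjugacy. By Proposition \ref{twist} the identification is independent of the choice of diagram.

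The main anticipated obstacle is notational rather than conceptual: reconciling the duality conventions so that the matrices appearing in the chain-complex relations line up with the $X_i$ in the coloring condition (\ref{coloring}) without spurious transposes or inverses, and confirming that the dualized $\Z^d$-action matches the shift $\s$ in the correct sense. Once these conventions are pinned down, the argument is a direct unwinding of definitions. The extension from $R = \Z$ (colors in $\T^N = \hat\Z^N$) to $\Z \subset R \subseteq \Q$ (colors in the solenoid $\hat R^N$) is automatic from the identity $\hat R = {\rm Hom}(R,\T)$, so a single argument treats all allowed rings uniformly.
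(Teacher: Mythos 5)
Your proposal is correct and follows essentially the same route as the paper: dualize the presentation $\A_\g^0 = \coker b$ to get $\hat\A_\g^0 = \ker\hat b$, identify $\hat Y$ with the space of tuples of color sequences in $(\hat R^N)^{\Z^d}$, and observe that the vanishing of $\hat b$ on the Wirtinger $2$-cells is precisely the twisted crossing condition (\ref{coloring}). The only differences are cosmetic — you track the duality pairing with an explicit transpose, whereas the paper's row-vector/column-vector convention makes $(vX)\cdot\alpha = v\cdot(X\alpha)$ associatively with no transpose appearing — and you are somewhat more explicit about where the ``based'' condition $\alpha_0 \equiv 0$ and the intertwining of $t^\n$ with the shift $\s_\n$ enter.
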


\begin{proof}  Elements of the dual group $\hat A_\g^0$ are certain assignments
$$\a: t^{\n} \otimes v \otimes x_i \mapsto v \cdot \a_{i, \n} \in \T,$$ where $\a_{i, \n}$ is a vector in $\hat R^N$ for $1\le i\le q$ and $\n \in \Z^d$. Equivalently, $\a$ is a function taking each $x_i$  to an element $\a_{i, \n} \in (\hat R^N)^{\Z^d}$.  Since $\hat A_\g^0$ is the dual of the cokernel of $b$, such an assignment $\a$ is in $\hat A_\g^0$ if and only if $\a \circ b = 0$. In view of Equation (\ref{relations}),  the condition amounts to 
$$v \cdot \a_{i, \n} + vX_i \cdot   \a_{j, \n + \e(x_i)} = v \cdot \a_{k,\n} + v X_k\cdot  \a_{i, \n + \e(e_k)},$$
for every $v \in V$. Equivalently,
$$\a_{i, \n} + X_i  \a_{j, \n + \e(x_i)} =  \a_{k,\n} +  X_k  \a_{i, \n + \e(e_k)},$$
which is the condition (\ref{coloring}) for $\g$-twisted colorings. 
\end{proof}

From the chain complex (\ref{complex}), one obtains an exact sequence: 
$$0 \to H_1(X'; V) \to {\cal A}_\g^0\  \to \coker c \ {\buildrel f \over \rightarrow}\ H_0(X'; V) \to 0,$$ where the maps are natural. (This is shown in \cite{kl} in the case that $d=1$ and $R$ is a field. The general statement is proven in \cite{hln}.)

Exactness  allows us to write a short exact sequence that expresses ${\cal A}_\g^0$ as an extension of $H_1(X'; V)$
\begin{equation}\label{exactseq1}0 \to H_1(X'; V) \to  {\cal A}_\g^0 \to \ker f \to 0.\end{equation}

Pontryagin then duality induces an exact sequence
$$0 \to \widehat{\ker f} \to {\rm Col}^0_\g \to \widehat{ H_1(X'; V)} \to 0. $$

Recall that the shift map on ${\rm Col}^0_\g$ is denoted by $\s$. We denote the shifts of $\widehat{ H_1(X'; V)}$ and $\widehat{\ker f}$ by $\s'$ and $\s''$, respectively. By Yuzvinskii's Addition Formula (see Theorem 14.1 of \cite{schmidt}, for example), 
\begin{equation}\label{yuz} h(\s) = h(\s') + h(\s''). \end{equation}

The module $\ker f$ can be described more explicitly. While $H_0(X'; V)$ consists of the cosets of $C_0(X'; V)$ modulo the entire image of $\partial_1$, the module $\coker c$ is generally larger. Its elements are the cosets of $C_0(X'; V)$ modulo the image of $\partial_1$ restricted to the $\g$-twisted 1-chains generated only by $
1\otimes e_s \otimes x_0$. (As above, $e_s$ ranges over a basis of $\Z^N$.)   It follows that $\ker f\cong\im \partial_1/\im c.$

The exact sequence (\ref{exactseq1}) implies that 
\begin{equation}\label{ses1}D_{\ell, \g} = \D_{\ell, \g}\cdot \D_0(\ker f).\end{equation}
On the other hand, the short exact sequence
\begin{equation} 0 \to \ker f \to \coker c \ {\buildrel f \over \rightarrow}\ H_0(X'; V) \to 0\end{equation}
implies that 
\begin{equation}\label{ses2} \D_0(\coker c) = \D_0(\ker f) \cdot \D_0(H_0(X'; V)). \end{equation}
The elementary divisor $\D_0(\coker c)$ is  the determinant of $t^{\e(x_0)} X_0 - I$.  Note that $\D_0(H_0(X'; V)$ is a factor. 

Together equations (\ref{ses1}) and  (\ref{ses2}) imply the following relationship previously proven in \cite{kl}. 

\begin{prop} \label{alexcompute}The $\gamma$-twisted Alexander polynomial and Wada invariant are related by $$\D_{\ell, \g} = W_{\ell, \g} \cdot \D_0(H_0(X'; V)).$$ \end{prop}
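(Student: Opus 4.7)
The plan is to derive the claimed identity purely by algebraic manipulation of the two elementary-divisor equations (\ref{ses1}) and (\ref{ses2}) established just before the statement, together with the explicit evaluation $\D_0(\coker c) = \det(t^{\e(x_0)}X_0 - I)$ noted in the text and the definition $W_{\ell,\g} = D_{\ell, \g}/\det(t^{\e(x_0)}X_0 - I)$. All equalities will be understood up to multiplication by a unit in $R[\Z^d]$.

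First I would rewrite (\ref{ses2}) to solve for the unknown factor $\D_0(\ker f)$, namely
$$\D_0(\ker f) = \frac{\det(t^{\e(x_0)}X_0 - I)}{\D_0(H_0(X'; V))}.$$
Strictly speaking one should justify that $\D_0(H_0(X'; V))$ divides $\det(t^{\e(x_0)}X_0 - I)$ as an element of $R[\Z^d]$; this is precisely the content of (\ref{ses2}) and is a consequence of the multiplicativity of the zeroth Fitting ideal across the short exact sequence $0 \to \ker f \to \coker c \to H_0(X'; V) \to 0$ of finitely generated modules over the Noetherian UFD $R[\Z^d]$ (see Crowell--Fox \cite{cf}). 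Then substituting into (\ref{ses1}) yields
$$D_{\ell, \g} = \D_{\ell, \g} \cdot \frac{\det(t^{\e(x_0)}X_0 - I)}{\D_0(H_0(X'; V))},$$
and rearranging with the definition of $W_{\ell, \g}$ gives the desired identity.

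The main obstacle is really only the bookkeeping step above, ensuring that the cancellation takes place inside $R[\Z^d]$ rather than merely in its field of fractions; given the multiplicativity of $\D_0$ across short exact sequences in this setting, this is automatic. The substantive content of the proposition has already been absorbed into the two preceding exact sequences and the identification of $\D_0(\coker c)$, so no further homological input is needed.
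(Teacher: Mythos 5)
Your proposal is correct and follows the same route as the paper: the paper simply asserts that (\ref{ses1}) and (\ref{ses2}) together imply the identity, and you have carried out the implicit algebra, dividing the two elementary-divisor equations and invoking the definition $W_{\ell,\g} = D_{\ell,\g}/\det(t^{\e(x_0)}X_0 - I)$ and the identification $\D_0(\coker c) = \det(t^{\e(x_0)}X_0 - I)$. The divisibility concern you flag is already encoded in (\ref{ses2}) itself, so no additional input is needed.
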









In earlier work  \cite{swTOP2}, we interpreted the Mahler measure of any nonzero untwisted Alexander polynomial of a link as an exponential growth rate of torsion numbers associated to finite-index abelian branched covers of the link. Generalizing such a result for
twisted Alexander polynomials presents challenges. 
First, the representation $\g$ does not induce representations of the fundamental groups of branched covers of the link; we work with unbranched covers instead. Second, the relationship between the periodic points of the coloring dynamical system and torsion elements of the unbranched cover homology groups, which we exploited, becomes more subtle when twisting is introduced.  Third, $H_1(X'; V)$ has no obvious square matrix presentation, making the computation of the entropy $h(\s')$ more difficult. 

Let $\L \subseteq \Z^d$ be subgroup. 
An element 
$\a \in {\rm Col}_\g^0(\ell)$ is a $\L$-{\it periodic point} if 
$\s_\n \a = \a$ for every $\n \in \L$. More explicitly, if $\a$ is given by a based $\g$-twisted dynamical coloring $\a = (\a_{i, \n}) \in (\hat R^{qN})^{\Z^d}$ of a diagram for $\ell$, then  $\a$ is a $\L$-periodic point if $\a_{i, \n + \m} = \a_{i, \n}$, for every $\m \in  \L$ and $1\le i \le q$. 
We denote the subgroup of $\L$-periodic points by ${\rm Fix}_\L \s$.

Associated to the homomorphism $\pi\ {\buildrel \e \over \longrightarrow} \   \Z^d \to  \Z^d/\L$, where the second map is the natural projection, there is a regular covering space $X_\L$ of $X$ with covering transformation group $\Z^d/\L$. 
The representation $\g$ restricts to $\pi_1 X_\L$, which is a subgroup of $\pi$. (We denote the restricted representation also by $\g$.)
By Shapiro's Lemma,  $H_*(X_\L; V)$ is isomorphic to
the homology groups that result from the chain complex (\ref{chaincplx}) when $\Z^d$ is replaced by the quotient $\Z^d/\L$. 
\medskip

{\sl Henceforth, we assume that $R$ is the ring of fractions $S^{-1}\Z$ of $\Z$ by a multiplicative subset $S$ generated by a finite set of primes $p_1, \ldots, p_s$.}  The ring $R$ can be described as the set of rational numbers of the form $m/p_1^{\mu_1}\cdots p_s^{\mu_s}$, where $m\in\Z$ and $\mu_1, \ldots, \mu_s$ are nonnegative integers. The case $S=\emptyset$, whereby $S^{-1}\Z =\Z$, is included. \medskip

When the index $[\Z^d:\L] = |\Z^d/\L|$ is finite,  $H_*(X_\L; V)$ is a finitely generated $R$-module.  Since $R$ is a PID (see p.~73 of \cite{lang}, for example), we have a decomposition  \begin{equation} \label{homology} H_1(X_\L; V) \cong R^{\b_\L} \oplus R/(q_1^{n_1}) \oplus \cdots \oplus R/(q_t^{n_t}),  \end{equation} where $q_1, \ldots, q_t$ can be taken to be prime integers.  We note that $R/(q_i^{n_i}) \cong \Z/(q_i^{n_i}) \otimes_\Z R$.  

The following lemma is well known. (See p.~312 of \cite{dk}, for example.)

\begin{lemma} \label{order}  For prime $q$, $$\Z/(q^{n}) \otimes_\Z R = \begin{cases} \Z/(q^{n}) & \text{if $q \notin S$},\\ 0& \text{if $q \in S$}.\end{cases}$$ \end{lemma}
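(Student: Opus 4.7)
The plan is to use flatness of localization to reduce the tensor product to a quotient of $R$, and then analyze the two cases directly.

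First I would recall that $R=S^{-1}\Z$ is flat over $\Z$, since any localization of a commutative ring is flat. Applying $-\otimes_\Z R$ to the short exact sequence
$$0 \to \Z \ {\buildrel q^n \over \longrightarrow}\ \Z \to \Z/(q^n) \to 0$$
yields a short exact sequence, which shows that $\Z/(q^n)\otimes_\Z R \cong R/q^n R$. So the whole question reduces to computing $R/q^n R$ in each case.

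For the case $q \in S$, since $q$ is one of the generating primes of the multiplicative set $S$, it is a unit in $R$. Hence $q^n R = R$, and $R/q^n R = 0$.

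For the case $q \notin S$, I would consider the natural ring homomorphism $\phi: \Z \to R/q^n R$ and show it induces an isomorphism $\Z/(q^n) \to R/q^n R$. Clearly $q^n$ lies in the kernel, so $\phi$ factors through $\Z/(q^n)$. For surjectivity, any element of $R$ has the form $m/p_1^{\mu_1}\cdots p_s^{\mu_s}$ with $m \in \Z$ and each $p_i \ne q$; since $\gcd(p_i,q)=1$, each $p_i$ is invertible modulo $q^n$, so an integer representative exists modulo $q^n R$. For injectivity, suppose an integer $m$ satisfies $m \in q^n R$, so $m \cdot p_1^{\mu_1}\cdots p_s^{\mu_s} = q^n m'$ for some integer $m'$ and some nonnegative exponents. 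Since the $p_i$ are coprime to $q$, it follows that $q^n \mid m$.

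The argument is essentially routine commutative algebra, so there is no real obstacle; the only point worth care is ensuring that integer representatives exist modulo $q^n R$ when $q \notin S$, which hinges on the coprimality of $q$ with each generator $p_i$ of $S$.
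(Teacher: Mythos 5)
Your proof is correct. Note that the paper itself gives no proof of this lemma, simply citing Davis--Kirk p.~312, so there is no "paper's approach" to compare against; your argument supplies the routine details the authors omitted. One small simplification: you do not need flatness of $R$ over $\Z$ to get $\Z/(q^n)\otimes_\Z R \cong R/q^n R$ --- right exactness of the tensor product applied to $\Z \xrightarrow{q^n} \Z \to \Z/(q^n) \to 0$ already gives this. The case analysis ($q$ a unit in $R$ when $q\in S$; coprimality of $q$ with the generators of $S$ giving the isomorphism $\Z/(q^n)\cong R/q^n R$ when $q\notin S$) is clean and complete.
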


In view of Lemma \ref{order} we may as well assume that no prime $q_i$ in the decomposition (\ref{homology}) is in $S$.

\begin{definition} The $\g$-{\it twisted $\L$-torsion number} $b_{\L, \g}$ is the order of the $\Z$-torsion subgroup of $H_1(X_\L; V)$. When $d=1$ and $\L=r\Z$, we write $b_{r,\g}$.
\end{definition}

Since $R^{\b_\L}$ is $\Z$-torsion-free, we have $b_{\L,\g} = q_1^{n_1}\cdots q_t^{n_t}$. By additivity of the Pontryagin dual operator and the well-known fact that the dual of any finite abelian group is isomorphic to itself, we have
$$\widehat{H_1(X_\L; V)} \cong \hat R^{\b_\L} \oplus \Z/(q_1^{n_1}) \oplus \cdots \oplus \Z/(q_t^{n_t}).$$
If $R=\Z$, then $\hat R$ is the circle. Otherwise, $\hat R$ is a solenoid; it is connected but not locally connected. 

\begin{cor}\label{components} The number of connected components of $\widehat{H_1(X_\L; V)}$ is equal to the $\g$-twisted $\L$-torsion number $b_{\L, \g}$. \end{cor}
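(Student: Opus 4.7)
The plan is to read off the result directly from the decomposition displayed immediately before the statement of the corollary, namely
$$\widehat{H_1(X_\L; V)} \cong \hat R^{\b_\L} \oplus \Z/(q_1^{n_1}) \oplus \cdots \oplus \Z/(q_t^{n_t}),$$
combined with the fact that the $q_i$ may be assumed outside $S$ (so the summands genuinely survive) and that $b_{\L,\g} = q_1^{n_1}\cdots q_t^{n_t}$ by definition.

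First I would recall that $\hat R$ is a connected topological group: when $R = \Z$, this is the circle $\T$; when $S \ne \emptyset$, $\hat R$ is a solenoid, which is the inverse limit, over the directed system of multiplications by elements of $S$, of circles. An inverse limit of connected compact Hausdorff spaces (with surjective bonding maps) is connected, so $\hat R$ is connected in either case. It follows that the finite product $\hat R^{\b_\L}$ is connected, and hence constitutes a single connected component of the dual group.

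Next I would note that each finite cyclic group $\Z/(q_i^{n_i})$ is discrete, so the finite direct sum $F := \Z/(q_1^{n_1}) \oplus \cdots \oplus \Z/(q_t^{n_t})$ is a finite discrete group of order $q_1^{n_1}\cdots q_t^{n_t}$. Translation by elements of $F$ in the direct sum $\hat R^{\b_\L} \oplus F$ is a homeomorphism, so the connected components of $\hat R^{\b_\L} \oplus F$ are precisely the cosets $\hat R^{\b_\L} \oplus \{f\}$ as $f$ ranges over $F$. Their number is therefore $|F| = q_1^{n_1}\cdots q_t^{n_t} = b_{\L,\g}$.

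The only mildly nontrivial point is the connectedness of $\hat R$ in the solenoid case; everything else is a formal consequence of Pontryagin duality applied to the structure theorem over the PID $R$. Since this connectedness is a standard fact about solenoids (or can be checked directly from the inverse limit description and the connectedness of $\T$), I do not expect any serious obstacle.
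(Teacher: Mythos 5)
Your proposal is correct and follows exactly the paper's own (implicit) argument: the paper records the decomposition of $\widehat{H_1(X_\L; V)}$ and then notes that $\hat R$ is the circle when $R=\Z$ and otherwise a connected solenoid, from which the count of components is the order of the finite summand $b_{\L,\g}$.
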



For any subgroup $\L$ of $\Z^d$, define $\<\L\>$ to be $\min\{ |\n| : 0 \ne \n \in \L\}$. Our first theorem on growth rates of twisted torsion numbers assumes that $R= \Z$ (that is, $S$ is empty). The second requires that $d=1$, but allows $S$ to be nonempty. We recall that 
$m(p)$ is the logarithmic Mahler measure of $p$. We extend to rational functions by defining $m(p/q) = m(p)-m(q).$ We recall also that the shift $\s''$ was defined following (\ref{exactseq1}).

\begin{theorem} \label{mainthm}  Let $\ell = \ell_1 \cup \cdots \cup \ell_d\subseteq {\mathbb S}^3$ be an oriented link and $\g: \pi \to {\rm GL}_N \Z$ a representation. Assume that $D_{\ell, \g} \ne 0$. Then
\begin{itemize} \item[\rm(1)] $$  \limsup_{\<\L\>\to \infty} {1 \over |\Z^d/\L|} \log b_{\L, \g} = h(\s) - h(\s'');$$
\item[\rm(2)] 
$$m(W_{\ell, \g})  \le  \limsup_{\<\L\>\to \infty} {1 \over |\Z^d/\L|} \log b_{\L, \g} \le m(D_{\ell, \g}).$$
\end{itemize}
Here the limit is taken over all finite-index subgroups $\L$ of $\Z^d$, and 
$\limsup$ can be replaced by an ordinary limit when $d=1$. 

 \end{theorem}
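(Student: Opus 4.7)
The plan is to convert $b_{\L,\g}$ into a count of connected components of ${\rm Fix}_\L\,\s'$, then combine an entropy computation on the based Alexander module ${\cal A}_\g^0$ (which admits a square presentation) with Yuzvinskii's addition formula (\ref{yuz}) to pin down $h(\s')$.

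The first step is the identification
$$\widehat{H_1(X_\L;V)} \cong {\rm Fix}_\L\,\s'.$$
This should follow from Shapiro's Lemma (compare Remark \ref{shapiro}), which recovers $H_*(X_\L;V)$ as the homology of the chain complex (\ref{chaincplx}) after replacing the coefficient ring $R[\Z^d]$ by $R[\Z^d/\L]$; dualizing converts this base change into the passage to $\L$-invariant elements on the Pontryagin side. Combined with Corollary \ref{components}, it gives that $b_{\L,\g}$ equals the number of connected components of ${\rm Fix}_\L\,\s'$. To obtain (1), I would then appeal to Theorem 21.1 of \cite{schmidt}, which asserts that for the algebraic $\Z^d$-action dual to a finitely generated Noetherian module, the exponential growth rate of the component count of ${\rm Fix}_\L$ as $\<\L\>\to\infty$ equals the topological entropy of that action, with $\limsup$ replaceable by an ordinary limit when $d=1$. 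This yields
$$\limsup_{\<\L\>\to\infty}\frac{1}{|\Z^d/\L|}\log b_{\L,\g} = h(\s'),$$
and an application of (\ref{yuz}) completes (1).

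For (2), the square presentation of ${\cal A}_\g^0$ together with the Lind--Schmidt--Ward entropy formula for algebraic $\Z^d$-actions yields $h(\s) = m(D_{\ell,\g})$. The upper bound then follows from $h(\s'') \ge 0$. For the lower bound, dualizing the short exact sequence $0\to\ker f\to\coker c\to H_0(X';V)\to 0$ makes $\widehat{\ker f}$ a quotient of $\widehat{\coker c}$, so by Yuzvinskii $h(\s'')\le h(\widehat{\coker c})$; since $\coker c$ is itself presented by the square matrix $t^{\e(x_0)}X_0-I$, this latter entropy equals $m(\det(t^{\e(x_0)}X_0-I))$. Combining these with the defining identity $W_{\ell,\g}=D_{\ell,\g}/\det(t^{\e(x_0)}X_0-I)$ and part (1) gives
$$\limsup\frac{1}{|\Z^d/\L|}\log b_{\L,\g} \;=\; m(D_{\ell,\g})-h(\s'') \;\ge\; m(W_{\ell,\g}).$$

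The main obstacle will be justifying the component-count entropy formula for $\s'$. Because $H_1(X';V)$ does not come with an obvious square matrix presentation, the one-variable Jensen-type argument underpinning Proposition \ref{entropy} is unavailable, and one must invoke the full Noetherian form of Schmidt's theorem. Extra care is needed when $R$ is a proper localization of $\Z$, in which case $\hat R$ is a solenoid and each free $R$-summand of $H_1(X_\L;V)$ contributes a connected but not locally connected factor; fortunately the component count, rather than a point count, remains the correct combinatorial invariant. The base-change identification $\widehat{H_1(X_\L;V)}\cong {\rm Fix}_\L\,\s'$ also demands verification, since homology and tensoring over $R[\Z^d]$ generally fail to commute; tracking the extension (\ref{exactseq1}) and the auxiliary sequences relating ${\cal A}_\g^0$, $\coker c$ and $H_0(X';V)$ under reduction modulo $\L$ should absorb the discrepancy.
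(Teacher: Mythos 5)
Your treatment of part (2), once part (1) is assumed, matches the paper's. The gap is in part (1): the claimed isomorphism $\widehat{H_1(X_\L;V)} \cong {\rm Fix}_\L\,\s'$ does not hold in general. Writing $J$ for the ideal implementing the base change $R[\Z^d]\to R[\Z^d/\L]$, the right-hand side is $\widehat{H_1(X';V)/JH_1(X';V)}$, so the identification requires homology to commute with that base change; it does not, because $H_1$ of the full (unbased) complex is a subquotient and $\ker$ is not right exact. You acknowledge this at the end, but the proposed patch --- reducing (\ref{exactseq1}) modulo $\L$ and tracking the extension --- reintroduces exactly the obstruction: applying $(-)/J(-)$ to (\ref{exactseq1}) yields an exact sequence whose leftmost term is ${\rm Tor}_1(\ker f,\,\Z[\Z^d/\L])$, whose image is the kernel of $H_1(X';V)/JH_1(X';V)\to {\cal A}_\g^0/J{\cal A}_\g^0$, and nothing in the proposal controls it. Without that control you cannot extract even an inequality relating $b_{\L,\g}$ to the component count of ${\rm Fix}_\L\,\s'$, since passing to a subgroup or a quotient of a compact abelian group can increase or decrease the number of components.

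The paper sidesteps this by never invoking $\s'$ in the periodic-point argument. It works with the relative complex $C_*(X',*';V)$, whose $H_1$ is ${\cal A}_\g$, a genuine cokernel of $\partial_2$ (there are no relative $0$-chains), and cokernels do commute with $(-)/J(-)$: this gives ${\cal A}_\g/J{\cal A}_\g \cong H_1(X_\L,*_\L;V)$. The long exact sequence of the pair then gives $0\to H_1(X_\L;V)\to H_1(X_\L,*_\L;V)\to {\rm Im}\,\bar\partial\to 0$ with ${\rm Im}\,\bar\partial$ inside a finitely generated free $\Z$-module, so the sequence splits over $\Z$; killing the $x_0$-coset yields ${\cal A}_\g^0/J{\cal A}_\g^0 \cong H_1(X_\L;V)\oplus \ker f/J\ker f$, whose dual is ${\rm Fix}_\L\s \cong \widehat{H_1(X_\L;V)}\oplus\widehat{\ker f/J\ker f}$. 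Component counts are multiplicative over the product, and Theorem 21.1 of \cite{schmidt} is applied to $\s$ and to $\s''$ (both of which dualize Noetherian modules with square presentations in view), giving the growth rate of $b_{\L,\g}$ as $h(\s)-h(\s'')$ directly. If you want to pursue your route through $\s'$, you would need to show the ${\rm Tor}$ discrepancy has negligible exponential growth, and that argument is not present.
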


\begin{cor} \label{mainthmcor} If $H_1(X'; V)$ has a square presentation matrix or if the spectral radius of $X_0$ does not exceed 1, then $$ m(\D_{\ell, \g})=\limsup_{\<\L\>\to \infty} {1 \over |\Z^d/\L|} \log b_{\L, \g}.$$ \end{cor}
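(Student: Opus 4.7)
The plan is to invoke Theorem~\ref{mainthm}(1) together with Yuzvinskii's Addition Formula (\ref{yuz}) to reduce the corollary to a computation of $h(\s')$, the topological entropy of the shift on $\widehat{H_1(X';V)}$. Since (\ref{yuz}) gives $h(\s)-h(\s'')=h(\s')$, Theorem~\ref{mainthm}(1) may be rewritten as
$$\limsup_{\<\L\>\to\infty}\frac{1}{|\Z^d/\L|}\log b_{\L,\g}=h(\s').$$
Thus in both cases of the corollary it suffices to prove $h(\s')=m(\D_{\ell,\g})$.

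For the square-presentation case, let $M$ be a square $R[\Z^d]$-presentation matrix for $H_1(X';V)$. The $0$th elementary divisor of a square-presented module is simply the determinant of that matrix, so $\D_{\ell,\g}=\det M$ (up to a unit). The standard entropy formula for the Pontryagin dual of a $\Z[\Z^d]$-module with square presentation---a case of Schmidt's Theorem~21.1 in \cite{schmidt}, refining the theorem of Lind-Schmidt-Ward \cite{lsw}---then gives $h(\s')=m(\det M)=m(\D_{\ell,\g})$.

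For the spectral-radius case, let $\lambda_1,\ldots,\lambda_N$ be the eigenvalues of $X_0$. Since $X_0\in{\rm GL}_N\Z$, $|\det X_0|=1$, so $\prod_i|\lambda_i|=1$; combined with $|\lambda_i|\le 1$ this forces $|\lambda_i|=1$ for all $i$. Setting $u=t^{\e(x_0)}$, a monomial in a single variable, the factorization $\det(uX_0-I)=\prod_i(u\lambda_i-1)$ together with the elementary identity $m(u\lambda-1)=\log\max(|\lambda|,1)$ yields
$$m\bigl(\det(t^{\e(x_0)}X_0-I)\bigr)=0.$$
By (\ref{ses2}) this determinant factors as $\D_0(\ker f)\cdot\D_0(H_0(X';V))$, and since both factors are integer polynomials with nonnegative Mahler measure, each is zero. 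Hence by (\ref{ses1}), $m(D_{\ell,\g})=m(\D_{\ell,\g})$, and by the definition of $W_{\ell,\g}$ we obtain $m(W_{\ell,\g})=m(D_{\ell,\g})-0=m(\D_{\ell,\g})$. The sandwich in Theorem~\ref{mainthm}(2) then forces $\limsup=m(\D_{\ell,\g})$.

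The main obstacle is in the first case, which rests on the entropy-Mahler-measure correspondence for square-presented modules over $R[\Z^d]$; this is deep algebraic-dynamical machinery, although the same body of results already underlies Theorem~\ref{mainthm}. The second case is essentially arithmetic once one notices that integrality of $X_0$ together with the unit-modulus of its determinant promotes the spectral-radius bound to equality, after which the multiplicative structure of Mahler measure does the rest.
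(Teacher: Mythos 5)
Your proposal is correct and follows essentially the same route as the paper. For the square-presentation case you invoke the entropy--Mahler-measure correspondence for square-presented $\Z[\Z^d]$-modules exactly as the paper does, though the relevant result in Schmidt's monograph is Example 18.7, not Theorem 21.1 (which concerns growth of periodic-point components). For the spectral-radius case your chain of reasoning---$m\bigl(\det(t^{\e(x_0)}X_0-I)\bigr)=0$, hence both factors in (\ref{ses2}) have zero Mahler measure, hence $m(W_{\ell,\g})=m(D_{\ell,\g})=m(\D_{\ell,\g})$ and the sandwich in Theorem \ref{mainthm}(2) closes the argument---is a slight repackaging of the paper's, which instead deduces $h(\s'')=0$, applies (\ref{yuz}) to get $h(\s')=h(\s)=m(D_{\ell,\g})$, and then uses (\ref{ses1}); the two are equivalent. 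Your intermediate observation that $|\lambda_i|=1$ for all eigenvalues of $X_0$ is unnecessary (since $|\lambda_i|\le 1$ already kills each factor's Mahler measure, which is the paper's reasoning after its change of variable), but it does no harm.
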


If we restrict to the case of a knot $k$ (that is, $d=1$), then we can say more. 

\begin{cor}  \label{mainthmknots} Assume that $k \subset {\mathbb S}^3$ is a knot, and $\g: \pi \to {\rm GL}_N\Z$ is a representation such that $\D_{k, \g} \ne 0$. Then 
$$m(\D_{k, \g}) = \lim_{r \to \infty} {1 \over r} \log b_{r, \g}.$$ \end{cor}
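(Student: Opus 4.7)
The strategy is to combine Theorem~\ref{mainthm}(1) with the Yuzvinskii formula (\ref{yuz}) to reduce the claim to a one-variable algebraic-dynamics computation of $h(\s')$, and then to invoke the classical identification of dual entropy with the Mahler measure of the $0$th elementary divisor.

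First, because $d=1$, Theorem~\ref{mainthm}(1) supplies an honest limit
$$\lim_{r \to \infty} \frac{1}{r}\log b_{r,\g}=h(\s)-h(\s'').$$
Pontryagin dualizing the exact sequence (\ref{exactseq1}) produces the short exact sequence $0\to\widehat{\ker f}\to {\rm Col}^0_\g\to\widehat{H_1(X';V)}\to 0$ that defines $\s''$ and $\s'$. The addition formula (\ref{yuz}) then gives $h(\s)-h(\s'')=h(\s')$, so the limit equals $h(\s')$, the topological entropy of the shift on $\widehat{H_1(X';V)}$. Since $\D_0(H_1(X';V))=\D_{k,\g}$ is nonzero by hypothesis, $H_1(X';V)$ is a finitely generated torsion $\Z[t^{\pm 1}]$-module.

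It remains to show $h(\s')=m(\D_{k,\g})$. This is the classical one-variable entropy--Mahler-measure correspondence: for any finitely generated torsion $\Z[t^{\pm 1}]$-module $M$, the shift on $\hat M$ has topological entropy $m(\D_0(M))$. One proves this by filtering $M$ by cyclic prime subquotients $\Z[t^{\pm 1}]/\mathfrak{p}$, computing the dual entropy of each directly (a full $\Z/q$-shift, a solenoidal shift defined by an irreducible polynomial, or a zero-entropy finite system, according as $\mathfrak{p}=(q)$, $(p(t))$, or a maximal ideal), and accumulating via Yuzvinskii; alternatively, it is the $d=1$ specialization of the Lind--Schmidt--Ward theorem \cite{lsw}. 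This last identification is the main obstacle: it fails in general for $d>1$, where pseudo-null submodules invisible to $\D_0$ can contribute additional entropy---exactly what the square-presentation and spectral-radius hypotheses of Corollary~\ref{mainthmcor} are designed to exclude. For a knot, the UFD $\Z[t^{\pm 1}]$ leaves no such room, and the desired identification is unconditional once $\D_{k,\g}\ne 0$.
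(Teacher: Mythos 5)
Your proof is correct, but it takes a genuinely different route from the paper's. The paper reduces to Corollary~\ref{mainthmcor} by first establishing the structural fact that $H_1(X';V)$ has a \emph{square} presentation matrix: since $\D_{k,\g}\ne 0$ implies $D_{k,\g}\ne 0$, the based module $\A_\g^0$ is torsion; $\A_\g^0$ has a square presentation matrix by construction, hence by Hillman's criterion (a f.g.\ torsion $\Z[t^{\pm 1}]$-module has a square presentation matrix iff it has no nonzero finite submodule) $\A_\g^0$ has no finite submodule, so neither does its submodule $H_1(X';V)$, which therefore also has a square presentation matrix; Example~18.7 of Schmidt then gives $h(\s')=m(\D_{k,\g})$. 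You instead go straight from Theorem~\ref{mainthm}(1) plus Yuzvinskii to $h(\s')$ and then invoke the unconditional one-variable identity $h(\widehat M)=m(\D_0(M))$ for any f.g.\ torsion $\Z[t^{\pm 1}]$-module $M$. That identity is indeed available for $d=1$: pseudo-null here means finite, finite duals carry no entropy, $\D_0$ is a pseudo-isomorphism invariant equal to the characteristic polynomial, and the cyclic pieces are handled by Yuzvinskii/Lind--Ward --- exactly the structure-theoretic argument you sketch, and your remark that pseudo-null positive-entropy modules only exist when $d>1$ is the right diagnosis of why Corollary~\ref{mainthmcor} needs extra hypotheses. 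The trade-off: the paper's detour through the square-presentation-matrix lemma isolates a concrete algebraic fact about $H_1(X';V)$ that is reused later (e.g.\ Lemma~\ref{square} and Corollary~\ref{D(1)}), whereas your argument is shorter and more clearly explains why the knot case is unconditional; either way the hypothesis $\D_{k,\g}\ne 0$ is what licenses the appeal to Theorem~\ref{mainthm}(1), via $D_{k,\g}\ne 0$, so no gap there.
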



Corollary \ref{mainthmknots} generalizes for the rings $S^{-1}\Z$. First, we express $D_{k, \g}$ uniquely as $d \tilde D_{k, \g}$, where $d \in R = S^{-1}\Z$ and $\tilde D_{k, \g}$ is a primitive polynomial in $\Z[t^{\pm 1}]$. Express $\D_{k, \g}$ similarly as $\delta \tilde \D_{k, \g}$. By Gauss's Lemma, $\tilde \D_{k, \g}$ divides $\tilde D_{k, \g}$. Moreover, $\delta$ divides $d$.

 \begin{theorem} \label{thm2} If $d$ is a unit in $R$, then $$m(\tilde \D_{k, \g})  =  \lim_{r \to \infty} {1 \over r} \log b_{r, \g}.$$ \end{theorem}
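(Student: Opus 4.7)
The plan is to adapt the proof of Corollary \ref{mainthmknots} to the ring $R = S^{-1}\Z$, using the unit hypothesis to pass between $R[t^{\pm 1}]$-polynomials and their primitive integer representatives in $\Z[t^{\pm 1}]$.

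First, since $\delta$ divides $d$ in $R$ and $d$ is a unit in $R$, the content $\delta$ of $\D_{k,\g}$ is itself a unit in $R$. Consequently $\D_{k,\g}$ and $\tilde \D_{k,\g}$ generate the same principal ideal in $R[t^{\pm 1}]$ and therefore determine the same elementary divisor of $H_1(X'; V)$ up to $R$-units; likewise for $D_{k,\g}$ and $\tilde D_{k,\g}$. Note that $m(\D_{k,\g})$ would depend on the choice of representative in $R[t^{\pm 1}]$, whereas $m(\tilde \D_{k,\g})$ is well-defined because $\tilde \D_{k,\g} \in \Z[t^{\pm 1}]$ is unique up to sign --- this is precisely why the theorem is phrased in terms of $\tilde \D_{k,\g}$.

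Next, I would run the dynamical argument underlying Theorem \ref{mainthm} and Corollary \ref{mainthmknots}, with the sole modification that the based coloring system ${\rm Col}^0_\g(k) = \widehat{{\cal A}_\g^0}$ now sits inside the solenoid $(\hat R^N)^{q\Z}$ rather than the torus $(\T^N)^{q\Z}$. The Yuzvinskii splitting $h(\s) = h(\s') + h(\s'')$ persists; Corollary \ref{components} continues to identify $|\pi_0({\rm Fix}_{r\Z}\s')|$ with $b_{r,\g}$; and Schmidt's Theorem 21.1, applied in the solenoid setting, yields
$$\lim_{r\to\infty} \frac{1}{r} \log b_{r,\g} = h(\s').$$
To evaluate $h(\s')$, I would invoke the Lind--Schmidt--Ward entropy formula for $R[t^{\pm 1}]$-modules: since the relevant elementary divisor of $H_1(X'; V)$ is $\tilde \D_{k,\g}$ up to $R$-units and $\tilde \D_{k,\g} \in \Z[t^{\pm 1}]$ is primitive, the entropy on $\widehat{R[t^{\pm 1}]/(\tilde \D_{k,\g})}$ evaluates to the ordinary integer Mahler measure $m(\tilde \D_{k,\g})$.

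The main obstacle is this final identification: extending the Lind--Schmidt--Ward entropy formula from $\Z[t^{\pm 1}]$-modules to $S^{-1}\Z[t^{\pm 1}]$-modules, and verifying that when the relevant elementary divisor admits a primitive integer representative $f$, the entropy of the shift on the solenoid $\widehat{R[t^{\pm 1}]/(f)}$ equals $m(f)$. The archimedean contributions from the roots of $f$ must combine with the $S$-adic contributions at primes in $S$ (where $\hat R$ acquires nontrivial local structure) so as to reproduce the classical integer Mahler measure; the unit hypothesis on $d$ is precisely what permits the reduction to such an integer representative, so that no generalized $S$-adic correction terms need to be incorporated.
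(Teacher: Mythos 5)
You have correctly isolated the technical crux --- that some entropy formula must be made to work over the solenoid $\widehat{R[t^{\pm 1}]/(f)}$ --- but you do not resolve it, and the resolution you gesture at is a non sequitur. The hypothesis that $d$ is a unit does \emph{not} imply that ``no generalized $S$-adic correction terms need to be incorporated.'' Even when $f$ is a primitive integer polynomial, the shift on $\widehat{R[t^{\pm 1}]/(f)}$ with $R = S^{-1}\Z$ generally picks up genuine $p$-adic contributions at primes $p\in S$ dividing a leading or trailing coefficient of $f$; these are not made to vanish by the unit hypothesis, and in fact they are needed for the solenoid entropy to equal $m(f)$ rather than the smaller archimedean quantity one would get from the toral part alone. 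So the step you flag as ``the main obstacle'' remains an obstacle in your write-up.

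The paper's actual proof takes a different route, and two ingredients it uses have no analogue in your proposal. First, it invokes Crowell's theorem, using precisely the hypothesis that the content of $D_{k,\g}$ is a unit in $R$, to conclude that ${\cal A}_\g^0$ --- and hence its submodule $H_1(X';V)$ --- is $\Z$-torsion free; this is what legitimizes passing to $H_1(X';V)\otimes_\Z\Q$, and your proposal omits it. Second, rather than directly computing the entropy of the solenoid shift $\sigma'$, the paper sandwiches $H_1(X';V)$ with finite index inside ${\cal N}=\bigoplus R[t^{\pm 1}]/(f_j)$, with ${\cal N}'=\bigoplus\Z[t^{\pm 1}]/(f_j)\subset {\cal N}\subset {\cal M}=\bigoplus\Q[t^{\pm 1}]/(f_j)$ and $\prod f_j = \tilde\D_{k,\g}$, and applies Schmidt's Lemma 17.6 only to the \emph{toral} system $\widehat{{\cal N}'}$, where no $S$-adic subtleties arise, obtaining $\lim\frac1r\log b_r' = m(\tilde\D_{k,\g})$. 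Then Lemma \ref{order} identifies $b_{r,\g}$ as the largest factor of $b_r'$ prime to $S$, and the theorem of Riley (\cite{riley2}, see also \cite{swENS}) that the $p$-power parts of $b_r'$ grow subexponentially shows $b_{r,\g}$ and $b_r'$ have the same exponential growth rate. This last comparison --- not any extension of the Lind--Schmidt--Ward formula to solenoids --- is what completes the proof, and it is entirely missing from your argument.
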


 \begin{proof} [Proof of Theorem \ref{mainthm}] We regard the basepoint $*$ as a subspace of $X$. 
 The representation $\g: \pi \to {\rm GL}_N \Z$ restricts trivially, and 
 hence we can consider the $\g$-twisted chain complex
 $C_*(*'; V)=\{C_0(*'; V)\}$, defined as in (\ref{chaincplx}), where $*'$ denotes the preimage of $* \in X$ under the projection map 
 $X' \to X$. 
 
 Clearly, $C_0(*'; V) \cong \Z[\Z^d]\otimes_\Z V$. Since $C_*(*'; V)$ is a sub-chain complex of $C_*(X'; V)$, the quotient
chain complex $C_*(X', *'; V) = C_*(X'; V)/ C_*(*'; V)$ is defined. From it, relative $\g$-twisted homology groups $H_*(X', *'; V)$ are defined. It is immediate that $H_1(X', *'; V) \cong {\cal A}_\g$.

Let $J$ be the ideal of $\Z[\Z^d]$  generated by all elements of 
$\L $. Define a twisted chain complex  of $\Z[\Z^d]$-modules $$JC_*(X',*'; V) = (J \otimes_\Z V) \otimes_{\Z[\pi]}C_*(\tilde X).$$ Then  $JC_*(X',*'; V)$ can be considered as a subcomplex of $C_*(X',*'; V)$. We denote the quotient chain complex by $\bar C_*(X',*'; V)$. Shapiro's Lemma implies that the homology groups of the latter chain complex are isomorphic to $H_*(X_\L, *_\L; V)$, where $*_\L$ denotes the preimage of $* \in X$ under the projection map $X_\L \to X$. 

The short exact sequence of chain complexes $$0 \to JC_*(X',*'; V) \to C_*(X', *'; V) \to \bar C_*(X',*'; V) \to 0$$ gives rise to a long exact sequence of homology groups: 
$$\cdots \to H_1( JC_*(X', *'; V)) {\buildrel i \over \longrightarrow}\ {\cal A}_\g \to H_1(X_\L, *_\L; V) \to 0.$$
Since every chain in $JC_1(X', *'; V)$ and $C_1(X', *'; V)$ is a cycle, it follows that the image of $i$ is $J{\cal A}_\g$. Hence by exactness,
\begin{equation} \label{homology1} H_1(X_\L, *_\L; V) \cong {\cal A}_\g/J{\cal A}_\g.\end{equation}

Now consider the short exact sequence of chain complexes $$0 \to \bar C_*(*'; V) \to \bar C_*(X'; V) \to \bar C_*(X',*'; V) \to 0,$$ 
where $\bar C_*(*';  V)$ and $\bar C_*(X'; V)$ are defined as $\bar C_*(X',*'; V)$ was, by replacing the $\Z[\Z^d]$ with  $J$. The sequence gives rise to a long exact sequence of homology groups:
\begin{equation}\label{homology2} 0 \to H_1(X_\L; V) \to H_1(X_\L,*_\L; V) \ {\buildrel \bar\partial \over \longrightarrow}\ {\rm Im}\ \bar\partial \to 0. \end{equation}
The image of $\bar \partial$ is an $\Z$-submodule of $\bar C_0(*'; V) \cong J \otimes_\Z V$. The latter module is isomorphic to the direct sum of $|\Z^d/\L|$ copies of $V\cong \Z^N$, and therefore is free. Since $\Z$ is a PID, the submodule ${\rm Im}\ \bar\partial$ is also free, and the sequence (\ref{homology2}) splits. We have
$${\cal A}_\g/J{\cal A}_\g \cong H_1(X_\L; V)\oplus {\rm Im}\ \bar\partial.$$

We obtain ${\cal A}_\g^0/J{\cal A}_\g^0$ by killing the coset of $1\otimes v \otimes x_0 \in {\cal A}_\g^0$. In view of the splitting, this can be done by killing
its image in ${\rm Im}\ \bar\partial$, resulting in a summand that is easily seen to be isomorphic to ${\rm ker} f/J\,{\rm ker} f$. We have 
$${\cal A}_\g^0/J{\cal A}_\g^0 \cong H_1(X_\L; V) \oplus {\rm ker} f/J\,{\rm ker} f.$$
Pontraygin duality gives
\begin{equation}\label{product} {\rm Fix}_\L \s \cong \widehat{H_1(X_\L; V)}\oplus \widehat{{\rm ker}f/J\,{\rm ker} f}.\end{equation}

The decomposition (\ref{product}) implies that the number of connected components of  ${\rm Fix}_\L\s$ is the product of the numbers of connected components of  $\widehat{H_1(X_\L; V)}$ and $\widehat{{\rm ker}f/J\,{\rm ker} f}$. 

Theorem 21.1 of \cite{schmidt} implies that the exponential growth rate of the number of connected components of ${\rm Fix}_\L\s$   equal to $h(\s)$, while that of $\widehat{{\rm ker}f/J{\rm ker} f}$ is 
$h(\s'')$.  The number of connected components of $\widehat{H_1(X_\L; V)}$ is $b_{\L, \g}$, by Corollary \ref{components}. Hence equation (\ref{yuz}) completes the proof of the first statement of 
Theorem \ref{mainthm}.

In order to prove the second statement of Theorem \ref{mainthm}, 
we note that ${\cal A}_\g^0$ has a square presentation matrix with determinant $D_{\ell, \g}$. Example 18.7 of \cite{schmidt} implies that $h(\s) = m(D_{\ell, \g})$. Furthermore, ${\rm ker} f$ is a submodule of $\coker c$, which also has a square presentation matrix $t^{\e(x_0)} X_0 - I$. Since $\widehat{{\rm ker} f}$ is a quotient of $\widehat{\coker c}$,  the topological entropy $h(\s'')$ of its shift does not exceed that of $\widehat{\coker  c}$. The latter is
$m({\rm det}(t^{\e(x_0)} X_0 - I))$, again by Example 18.7 of \cite{schmidt}. Since $m(W_{\ell, \g})= m(D_{\ell, \g}) - m({\rm det}(t^{\e(x_0)} X_0 - I))$, the second statement is proved. 
 \end{proof}

 \begin{proof}[Proof of Corollary \ref{mainthmcor}] If $H_1(X'; V)$ has a square presentation matrix, then the desired conclusion follows immediately from Example 18.7 of \cite{schmidt}. 
 
 The logarithmic Mahler measure of ${\rm det}(t^{\e(x_0)}X_0 -I)$ is equal to that of ${\rm det}(t^{\e(x_0)}-X_0)$, by a simple change of variable in (\ref{integral}). The latter vanishes if no eigenvalue of $X_0$ has modulus greater than 1. In this case, $h(\s') = h(\s)$ by (\ref{yuz}). The latter is equal to $m(D_{\ell, \g})$ by Example 18.7 of \cite{schmidt}. However, in view of (\ref{ses1}), we have  $m(D_{\ell, \g})= m(\D_{\ell, \g})$. 
 
 \end{proof}
 
\begin{proof}[Proof of Corollary \ref{mainthmknots}] By Corollary \ref{mainthmcor} it suffices to show that  $H_1(X'; V)$ has a square presentation matrix. This follows from the fact that a finitely generated torsion $\Z[t^{\pm 1}]$-module has a square presentation matrix if and only if it has no nonzero finite submodule (see page 132 of \cite{hillman}). The hypothesis that $\D_{k, t} \ne 0$ implies that $D_{k, t} \ne 0$, and hence the based $\g$-twisted Alexander module $\A_\g^0$ is a finitely generated $\Z[t^{\pm 1}]$-torsion module. Since $\A_\g^0$ has a square presentation matrix, it has no nonzero finite submodule. The $\g$-twisted homology group $H_1(X'; V)$ is a finitely generated submodule of $\A_\g^0$, and so it too has no nonzero finite submodule. Hence $H_1(X'; V)$  has a square presentation matrix. 

 \end{proof}

 \begin{proof}[Proof of Theorem \ref{thm2}]

 The $R[t^{\pm 1}]$-module ${\cal A}_\g^0$ has a square presentation matrix, and the greatest common divisor of the coefficients of its determinant $D_{k, \g}$ is a unit in $R$. By Theorem (1.3) of \cite{crowell}, ${\cal A}_\g^0$ is $\Z$-torsion free.
 Since $H_1(X'; V)$ is a submodule of ${\cal A}_\g^0$, by the exact sequence (\ref{exactseq1}), it is $\Z$-torsion free as well. Hence $H_1(X'; V)$ embeds naturally in $H_1(X'; V) \otimes_\Z\Q$. 
 
 Since $\Q[t^{\pm 1}]$ is a PID, there exist primitive polynomials $f_1, \ldots, f_k \in \Z[t]$ such that $f_j$ divides $f_{j+1}$, for $j=1, \ldots, k-1$, and 
 $$H_1(X'; V)\otimes_\Z\Q \cong \Q[t^{\pm 1}]/(f_1) \oplus\cdots \oplus \Q[t^{\pm 1}]/(f_k) : = {\cal M}.$$
 The product $f_1\cdots f_k$ is equal to $\tilde \D_{k, \g}$, up to multiplication by $\pm t^\n$. Generalizing the argument of Lemma 9.1 
 of \cite{schmidt}, we see that $H_1(X'; V)$ embeds with finite index in 
 ${\cal N} = R[t^{\pm 1}]/(f_1)\oplus \cdots \oplus R[t^{\pm 1}]/(f_k).$
 
 Set ${\cal N}' = \Z[t^{\pm 1}]/(f_1) \oplus \cdots \oplus \Z[t^{\pm 1}]/(f_k)$. Let $b_r'$ be the number of connected components of  period-r points of the dual dynamical system. Lemma 17.6 of \cite{schmidt} implies
 $$m(f_1\cdots f_k) = \lim_{r\to \infty}{1 \over r} \log b_r'.$$
 Since ${\cal N}' \subset {\cal N} \subset {\cal M}$, and since the systems dual to ${\cal N}$ and ${\cal M}$ have the same entropy by Lemma 17.6 of \cite{schmidt}, we have $$h(\s) = \lim_{r\to \infty}{1 \over r} \log b_r'.$$ 
 Since ${\cal N} = {\cal N}' \otimes_\Z R$, we see from Lemma \ref{order} that $b_{r, \g}$ is the largest factor of $b_r'$ not divisible by any prime in $S$. It was shown in \cite{riley2} (see also 
\cite{swENS}) that for any prime $p$, 
$$  \lim_{r\to \infty}{1 \over r} \log b_r'^{(p)} =0,$$
where $ b_r'^{(p)}$ is the largest power of $p$ dividing $b_{r, \g}$. Thus
$b_{r, \g}$ has the same growth rate as $b_r'$, and this rate is $m(\tilde \D_{k, \g})$.
 
 \end{proof}

\section{Structure of twisted Alexander modules for knots} 

Assume that $k \subset {\mathbb S}^3$ is a knot and $\g: \pi \to {\rm GL}_NR$ is a representation of its group. As before $R = S^{-1}\Z$, where $S$ is a multiplicative subset of $\Z$ generated by finitely many primes, possibly empty. As in the previous theorem, we assume that $D_{k, \g}$ is primitive, and hence ${\cal A}_\g^0$ is $\Z$-torsion free. 

We show that the based $\g$-twisted Alexander module $\A_\g^0$ is completely described by a pair of embeddings $f,g: R^M \to R^M$. This simple description appears to be well known, at least in the untwisted case. However, we have not found a reference for it. 

From the description, the twisted Alexander polynomial is easily found. Also, the decomposition of entropy $h(\s)$ into $p$-adic and Euclidean parts \cite{lw} can be seen from this perspective. 

Recall that  $\A_\g^0$ is generated as an $R[t^{\pm 1}]$-module by the $1$-chains $t^n \otimes v\otimes x_i$, where $n \in \Z$, $v$ ranges over a fixed basis for $V$, and $i$ ranges over all arcs of a diagram ${\cal D}$ for $k$ except for a fixed arc. A defining set of relations, corresponding to the crossings of ${\cal D}$, is given as in (\ref{relations})
by 
\begin{equation}\label{rels}t^n\otimes v \otimes x_i + t^{n+1}\otimes vX_i \otimes x_j = t^n \otimes v \otimes x_k + t^{n+1}\otimes vX_k \otimes x_i.\end{equation}

Define $B$ to be the $R$-module with generators  $t^n  \otimes v\otimes x_i$, as above but with $n=0,1$. Relations are those of (\ref{rels}) with $n=0$. Let $U$ be the $R$-module freely generated by $1$-chains $1 \otimes v \otimes x_i$. Let $f: U \to B$ 
be the obvious  homomorphism. Define $g: U \to B$ to be the homomorphism sending each $1 \otimes v \otimes x_i$ to $t \otimes v \otimes x_i$.

If either $f$ or $g$ is not injective, then replace $U$ by the quotient $R$-module $U/(\ker f+\ker g)$. Replace $B$ by $B/(f(\ker g)+g(\ker f))$,
and $f,g$ by the unique induced homomorphisms. If again $f$ or $g$ fails to be injective, then we repeat this process. Since $R$ is Noetherian, we obtain injective maps $f,g$ after finitely many iterations.

\begin{lemma} \label{structure}  The module $\A_\g^0$ is isomorphic to the infinite sum
$$\cdots  \oplus_{R^M} R^M \oplus_{R^M} R^M \oplus_{R^M} \cdots,$$
with identical amalgamations given by the monomorphisms $R^M\ {\buildrel g  \over \leftarrow}\  R^M\  {\buildrel f \over \rightarrow}\  R^M$.  Moreover, $\D_{k, \g}(t)= {\rm det} (g-t f)$.

\end{lemma}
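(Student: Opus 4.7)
The plan is to exhibit $\A_\g^0$ as an amalgamated colimit of ``two-level slices'' of its Wirtinger presentation, and then to read off the twisted Alexander polynomial from the resulting square $R[t^{\pm 1}]$-module presentation.

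I would first observe that the crossing relations (\ref{rels}) are translation-equivariant: the relation at level $n$ is $t^n$ times the one at level $0$, and each relation involves only chains at two consecutive levels $n$ and $n+1$. Thus $\A_\g^0$ carries a staircase structure in which each level is a copy of $U$, and each pair of adjacent levels, subject to the $Nq$ crossing relations coupling them, is by construction the module $B$. The level-$n$ inclusion into $B$ is realized by $f$, and the level-$(n+1)$ inclusion by $g$.

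Next I would identify $\A_\g^0$ with the colimit of copies $B_n$, $n\in\Z$, of $B$, in which $g(U)\subset B_n$ is glued to $f(U)\subset B_{n+1}$, with the shift $t$ acting as $B_n\to B_{n+1}$. Every generator and every defining relation of $\A_\g^0$ lives inside some single $B_n$, so the colimit agrees with $\A_\g^0$. The iterative quotient is needed to make this amalgamation well-posed: an element $u\in\ker f$ would vanish in $B_n$ while its companion $g(u)\in B_{n-1}$ would persist, forcing an additional identification under translation. Quotienting $U$ by $\ker f+\ker g$ and $B$ by $f(\ker g)+g(\ker f)$ restores consistency, and the process terminates because $R$ is Noetherian. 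After stabilization, $f$ and $g$ are injective, and a rank count on the Wirtinger-type presentation of $B$ shows it is free of the same rank as $U$, so both can be written as $R^M$.

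Finally, to compute $\D_{k,\g}(t)$, note that under the amalgamated description $\A_\g^0$ is generated as an $R[t^{\pm 1}]$-module by a basis $e_1,\ldots,e_M$ of the common $R^M$, with defining relations $g(e_i)=t\,f(e_i)$ coming from the gluing of $B_n$ to $B_{n+1}$. The resulting presentation matrix is $g-tf$, an $M\times M$ matrix over $R[t^{\pm 1}]$, so its $0$-th elementary divisor is $\det(g-tf)$ up to a unit, yielding $\D_{k,\g}(t)=\det(g-tf)$. The main obstacle will be a careful justification of the iterative quotient and of the claim that the stabilized $B$ is free of the same rank as $U$; once these are settled, the rest is direct bookkeeping on the presentation matrix.
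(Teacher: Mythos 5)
Your overall strategy---decomposing $\A_\g^0$ as the amalgamated colimit of two-level slices $B_n$ joined along $U_n$, iteratively quotienting to force the amalgamating maps to be injective, and then reading off the square $R[t^{\pm1}]$-presentation $g-tf$---matches the paper's proof in its essentials. But the gap you flag at the end is real, and the remedy you sketch does not close it. A ``rank count on the Wirtinger-type presentation of $B$'' can at best determine the rank of $B$ modulo torsion; it cannot show that $B$ is torsion-free, which is what is actually needed to conclude $B\cong R^M$. The paper instead invokes the standing hypothesis of the section: $D_{k,\g}$ is assumed primitive, and by Crowell's theorem this forces $\A_\g^0$ to be $\Z$-torsion free. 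After the iterative quotient makes $f_n,g_n$ injective, the module analogue of the factor-injectivity of amalgamated free products shows each $B_n$ embeds in $\A_\g^0$, hence is $\Z$-torsion free; being finitely generated over the PID $R=S^{-1}\Z$, it is therefore free, of a rank $M$ independent of $n$ by the shift symmetry. This torsion-freeness input is not optional: if $D_{k,\g}$ fails to be primitive the slices need not be free, and the claimed $R^M\leftarrow R^M\rightarrow R^M$ description can break down. You should also note that once $f:U\hookrightarrow B\cong R^M$ is injective, $U$ is automatically free of rank at most $M$; a short argument (or the fact that $\D_0(\A_\g^0)\neq 0$) is then needed to see that the rank is exactly $M$, so that $g-tf$ really is square.
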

 
\begin{proof} For $n \in \Z$, let $B_n$ be the $R$-module generated by $1$-chains $t^n \otimes v \otimes x_i$ and $t^{n + 1} \otimes v \otimes x_i$, and relations given by (\ref{rels}). (As above, $v$  ranges over a basis of $V$ and $i$ ranges over all arcs of ${\cal D}$ except the fixed arc.) We regard the generators $t^n \otimes v \otimes x_i\in B_{n -1}$ as distinct from  
$t^n \otimes v \otimes x_i \in  B_n$. 

Let $U_n$ be the $R$-module freely generated by the $1$-chains  $t^n \otimes v \otimes x_i$. 

Define homomorphisms $f_n: U_n \to B_n$  by 
$t^n \otimes v \otimes x_i\mapsto t^n \otimes v \otimes x_i.$ 
Define $g_n: U_n \to B_{n-1}$ by $t^n \otimes v \otimes x_i\mapsto t^n \otimes v \otimes x_i$. It is clear that $\A_\g^0$ is isomorphic to the infinite amalgamated sum 
\begin{equation}\label{structure} \cdots \oplus_{U_n} B_n \oplus_{U_{n +1}} B_{n+1} \oplus_{U_{n+2}} \cdots, \end{equation}
where the amalgamation maps $B_{n-1}\ {\buildrel g_n \over \leftarrow}\  U_n\  {\buildrel f_n\over \rightarrow}\  B_n$ are not necessarily injective. 
However, the image under $f_n$ (resp. $g_n$) of any element that is in the kernel of $g_n$ (resp. $f_n$) is trivial in $\A_\g^0$. Hence we can apply the operation described prior to the statement of Lemma \ref{structure} simultaneously for all $n$, to ensure that the maps $f_n$ and $g_n$ are injective.

In any amalgamated free product $G= B*_UB'$, with amalgamating maps $B\ {\buildrel g \over \leftarrow}\  U\  {\buildrel f \over \rightarrow}\  B$, the natural maps $B \to G$ and $B' \to G$ are injections. The analogous statement holds for amalgamated sums $B \oplus_UB'$ in which the amalgamating maps are injective. The proof is easy and left to the reader. Consequently, each $B_n$ is naturally embedded in $\A_\g$. 

Since $\A_\g^0$ is $\Z$-torsion free,  $B_n$ are finitely generated free $R$-modules isomorphic to $R^M$ for some positive integer $M$, independent of $n$. 

The decomposition (\ref{structure}) induces an $R[t^{\pm 1}]$-module structure, with generators $b_1, \ldots, b_M \in B$ corresponding to module generators, and the amalgamation $B\ {\buildrel g \over \leftarrow}\  U\  {\buildrel f\over \rightarrow}\  B$ inducing a defining set of module relations $g(u_i)-t f(u_i),$ where $u_1, \ldots, u_M$ generate $U$. Clearly $\D_{k, \g} = {\rm det}(g-tf)$. 
 \end{proof}

\begin{remark} \label{coefficient} If $\D_{k, \g} \ne 0$, then after multiplication by a unit in $R[t^{\pm 1}]$, we can assume that $\D_{k, \g}$ is an integer polynomial with with leading and trailing coefficients 
$c_1, c_0$ not divisible by any primes in $S$. Then   $|c_1| =  |R^M: g(R^M)|$ and $|c_0|= |R^M: f(R^M)|$. 

When $c_1=c_0= 1$, the maps
$f$  and $g$  are isomorphisms. In this case, $\A_\g^0$ is isomorphic to $R^M$ with the action of $t$ given by $g\circ f^{-1}$. 
This is the case whenever $k$ is a fibered knot
(see \cite{cha}, \cite{swJKTR}). If, in addition, $R=\Z$, then $\sigma$ is a toral automorphism and its entropy is the sum of the logs of moduli of eigenvalues outside the unit circle, corresponding to expansive directions on the torus.

For non-fibered knots $k$, the coefficients $c_0, c_1$ measure how far $f, g$ are from being isomorphisms, and, in this sense, provide a measure of the non-fiberedness of $k$. The map $\sigma$ is an automorpism of a solenoid, and the contribution of the leading coefficient to topological entropy reflects expansion in p-adic directions \cite{lw}. Work of S. Friedl and S. Vidussi \cite{fv} suggests that $c_0$ and $c_1$ are complete fibering obstructions with $R = \Z$. 

\begin{conj} A knot $k$ is fibered iff for every representation
$\g: \pi \to {\rm GL}_N \Z$,  both $c_0$ and $c_1$ have absolute value 1.  \end{conj}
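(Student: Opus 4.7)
The forward implication is essentially established by the arguments of \cite{cha,swJKTR}. If $k$ is fibered with genus-$g$ fiber and monodromy $\phi$, then $\pi' \cong F_{2g}$ is finitely generated free and $\pi \cong \pi' \rtimes_\phi \Z$. For any $\g: \pi \to {\rm GL}_N\Z$, Shapiro's Lemma (Remark~\ref{shapiro}) identifies $H_*(X'; V)$ with the $V$-coefficient homology of $\pi'$; freeness of $\pi'$ makes $\A_\g^0$ a free $\Z$-module of rank $2gN$ on which $t$ acts via a unimodular matrix combining $\phi$ with $\g$. In the amalgamated-sum description of Lemma~\ref{structure}, both $f$ and $g$ are then isomorphisms, so $|c_0|=|c_1|=1$.

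For the converse, the plan is to invoke the theorem of Friedl and Vidussi \cite{fv}: $k$ is fibered if and only if, for every epimorphism $\alpha: \pi \to G$ onto a finite group $G$, the corresponding twisted Alexander polynomial is monic. Given such $\alpha$, I would compose with the left regular representation $\rho: G \hookrightarrow {\rm GL}_{|G|}\Z$ to obtain $\g := \rho \circ \alpha$, and then apply the conjecture's hypothesis to $\g$. The remaining task is to identify $\D_{k,\g}$, as produced by our framework, with the polynomial appearing in \cite{fv} (up to units in $\Z[t^{\pm 1}]$), so that the conclusion $|c_1(\D_{k,\g})|=1$ delivers monicity in their sense.

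The principal obstacle is this identification, which requires managing two subtleties. First, Proposition~\ref{alexcompute} relates $\D_{k,\g}$ to the Wada invariant $W_{k,\g}$ by the factor $\det(tX_0 - I)$; when $\g$ comes from a finite quotient via the regular representation, $X_0$ is a permutation matrix, so $\det(tX_0-I)$ has leading coefficient $\det X_0 = \pm 1$ and constant term $\pm 1$, and the correction does not affect whether the top and bottom coefficients are units. Second, twisted Alexander polynomials generally lack the classical reciprocity, so the two hypotheses $|c_0|=1$ and $|c_1|=1$ are genuinely independent assumptions, both needed to obtain monicity after any normalization and to cover the analogous condition for the mirror image. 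Once these technicalities are dispatched, the converse reduces to \cite{fv}, whose proof rests on Agol's virtual RFRS fibering theorem and the residual-finiteness machinery of Wise; the contribution of the present approach is a concrete reformulation of the fibering obstruction as the invertibility of the two explicit monomorphisms $f$ and $g$ of Lemma~\ref{structure}.
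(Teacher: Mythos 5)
The statement you set out to prove is labeled \verb|\begin{conj}| in the paper: it is an open conjecture, not a theorem, and the paper supplies no proof. Remark~\ref{coefficient} merely records that the forward direction (fibered implies $|c_0|=|c_1|=1$ for all $\g$) is known from~\cite{cha} and~\cite{swJKTR}, and that the work of Friedl and Vidussi~\cite{fv} \emph{suggests} the converse. There is therefore no proof in the paper to compare against, and a complete argument would settle an open question.

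Your treatment of the forward direction is in line with what the paper cites: for a fibered knot the commutator subgroup is free of finite rank, ${\cal A}_\g^0$ becomes a finitely generated free $\Z$-module with $t$ acting unimodularly, and in Lemma~\ref{structure} both $f$ and $g$ are isomorphisms.

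For the converse you have correctly identified the relevant external input (the Friedl--Vidussi fibering criterion, resting on Agol's RFRS theorem and Wise's machinery), but the reduction as you sketch it leaves a substantive gap beyond the convention-matching you flag. The Friedl--Vidussi criterion requires, for every epimorphism $\alpha:\pi\to G$ onto a finite group, \emph{two} things: that the twisted Alexander polynomial $\Delta^\alpha_{k,\phi}$ be monic, and that its degree equal the value dictated by the Thurston norm of $\phi$ (essentially $|G|\cdot\|\phi\|_T$ up to correction terms). Monicity alone does not detect fiberedness --- this is the same failure already visible for the untwisted polynomial, where a monic $\Delta_k$ of degree $2g(k)$ does not imply $k$ is fibered. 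Your hypothesis $|c_1|=1$ (and $|c_0|=1$) delivers monicity of $\D_{k,\g}$ for $\g=\rho\circ\alpha$, but you have not shown that the hypothesis forces the degree equality, and there is no reason from Lemma~\ref{structure} alone that $M=\mathrm{rk}\,{\cal A}^0_\g$ must equal $|G|\cdot\|\phi\|_T$ plus the appropriate corrections. Either the degree condition must be derived from the assumption that $|c_0|=|c_1|=1$ holds for \emph{all} $\mathrm{GL}_N\Z$ representations (not merely permutation ones), which is not obvious, or the conjecture as stated may need a supplementary degree hypothesis. This is the mathematical heart of why the statement remains a conjecture in the paper rather than a theorem.
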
 

\end{remark}

\section{Examples} \label{examples}

Any finite group can be regarded as a subgroup of permutation matrices in ${\rm GL}_N \Z$, for some $N$. Hence finite-image representations of a link group $\pi$ are a natural source of examples of representations $\g: \pi \to {\rm GL}_N\Z$ for which the techniques above can be applied.  Examples can be found throughout the literature (see  \cite{fv2}, \cite{hln}, \cite{swAGT},  for example). 

Here we focus on a less obvious source of examples, coming from representations  $\g$ of $\pi$ in ${\rm SL}_2 \C$. Following \cite{hs}, we say that $\g$ is {\it parabolic} if the image of any meridian is a matrix with trace 2. The terminology is motivated by the fact that $\g$ projects to a representation $\pi \to {\rm PSL}_2 \C = {\rm SL}_2 \C/\<-I\>$ sending each meridian to a parabolic element. A theorem of Thurston \cite{thurston} ensures that if $k$ is a hyperbolic knot, then the discrete faithful representation $ \pi \to {\rm PSL}_2 \C$ describing the hyperbolic structure of ${\mathbb S}^3\setminus k$ lifts to a parabolic representation $\g: \pi \to {\rm SL}_2 \C$.

Let $k$ be a $2$-bridge knot. Such knots are parameterized by pairs of relatively prime integers $\a, \b$ such that  $\b$ is odd and $0< \b < \a$. The knots $k(\a, \b)$  and $k(\a', \b')$ have the same type if and only if $\a = \a'$ and  $\b' = \b$ or $\b\b' \equiv 1 \mod \a$. (See \cite{mur}, for example.) 

The group of $k(\a, \b)$ has a presentation of the form $\pi= \< x, y \mid W x = y W\>$, where $x, y$ are meridians and $W$ is a word in $x^{\pm 1}, y^{\pm 1}$. As Riley showed in \cite{riley1}, any nonabelian parabolic representation $\g: \pi \to {\rm SL}_2 \C$  is conjugate to one that maps 

\begin{equation} x\mapsto\begin{pmatrix} 1 & 1 \\  0 & 1\end{pmatrix},\ y\mapsto \begin{pmatrix} 1 & 0 \\ w & 1\end{pmatrix} \label{standard}\end{equation} 
for some $0 \ne w \in \C$. In fact $w$ is an algebraic integer, a zero of a monic integer polynomial $\Phi_{\a, \b}(w)$ that is the $(1,1)$-entry of the matrix $\g(W)$. We call $\Phi_{\a, \b}(w)$ a {\it Riley polynomial} of $k$. 

Similar ideas apply to 2-bridge links, which have presentations of the form $\<x, y \mid Wy=yW\>$  (see, for example, Section 4.5 of  \cite{mr}). 

\begin{remark} It is clear from what has been said that a 2-bridge knot $k(\a, \b)$ has at most two Riley polynomials $\Phi_{\a, \b}, \Phi_{\a, \b'}$, where $\b \b' \equiv 1 \mod \a$. Either can be used to determine the set of conjugacy classes of nonabelian parabolic representations of the knot group. \end{remark} 

Let $\phi(w)$ be an irreducible factor of $\Phi_{\a, \b}(w)$. We define the {\it total representation} $\g_\phi: \pi \to {\rm GL}_N\Z$ to be the representation determined by 

$$x\mapsto\begin{pmatrix} I & I \\  0 & I\end{pmatrix},\ y\mapsto \begin{pmatrix} I & 0 \\ C & I\end{pmatrix},$$
where $C$ is the companion matrix of $\phi$. Note that $N$ is $2 \cdot \deg \phi$. The corresponding twisted invariant $\D_{k, \g_\phi}$ is the {\it total $\g_\phi$-twisted Alexander polynomial of} $k$. Like the classical Alexander polynomial, it is well defined up to multiplication by $\pm t^i$. Our terminology is motivated by the fact, easily proved, that $\D_{k, \g_\phi}$ is the product of the $\D_{k, \g}$ as we let $w$ range over the roots of $\phi$. (This observation, which is a consequence of \cite{ksw}, yields a useful strategy for computing total twisted Alexander polynomials when the degree of $\phi$, and hence the size of the companion matrix, is large.) 

\begin{remark} \label{companion} (1) For $2$-bridge knots, the algebraic integer $w$ is in fact an algebraic unit \cite{riley1}.  Hence the trailing coefficient of $\Phi_{\a, \b}(w)$ is $\pm 1$, and so the companion matrix $C$ is unimodular. However, this need not be true for $2$-bridge links (see Example \ref{whiteheadex}).

(2) The notions of total representation and total twisted Alexander polynomial generalize easily. Assume that $\g: \pi \to {\rm SL}_2 \bar\Q$ is a representation  with image $\Z[w_1, \ldots, w_n]$. (Up to conjugation, the parabolic ${\rm PSL}_2 \C$ representation describing the complete hyperbolic structure of a knot complement projects to such a representation \cite{thurston}.) Let 
$\phi_1, \ldots, \phi_n$ be the minimal polynomials of the algebraic numbers $w_1, \ldots, w_n$ with leading coefficients $c_1, \ldots, c_n$. Over the ring $R=\Z[1/lcm(c_1\cdots c_n)]$, the polynomials are monic, and we can consider their companion matrices. In order to define the {\it total representation} of $\g$ we embed ${\rm SL}_2(\Z[w_1, \ldots, w_n])$ in ${\rm GL}_N R$ by  replacing $2 \times 2$ matrices with appropriate block matrices, as above. Here $w_i$ is replaced by a block diagonal matrix of the form $I\oplus \cdots \oplus C_i \oplus \cdots \oplus I$ and $N = 2 \Sigma_i \deg \phi_i$. In practice, it might be possible to find a more economical embedding. 
\end{remark}

For any total representation $\g_\phi$ associated to a nonabelian parabolic representation $\g: \pi \to {\rm SL}_2 \C$, the boundary homomorphism  
$C_1(X'; V)$ $ {\buildrel \partial_1 \over \longrightarrow}\ 
 C_0(X'; V)$ sends the $1$-chain $1 \otimes v \otimes x- 1 \otimes v \otimes y$ to $$t\otimes v(\g_\phi(x)-\g_\phi(y))\otimes * =
t \otimes v \begin{pmatrix} 0 & I \\-C & 0\end{pmatrix}\otimes *.$$
Such chains generate $C_0(X'; V)$, and hence $H_0(X'; V)$ vanishes. Since $\D_0(\coker c) = {\rm det}\ (t X - I) = (t-1)^{2  \deg \phi}$, Proposition \ref{alexcompute} implies that the total $\g_\phi$-twisted Alexander polynomial of $k$ is
\begin{equation} \label{compute}\D_{k, \g_\phi} = \D_0({\cal A}_\g^0)/(t-1)^{ 2 \deg \phi}.\end{equation}

\begin{example}\label{trefoil} The group of the trefoil knot $k=3_1$
has presentation $\pi = \<x, y \mid xyx=yxy\>$ and Riley polynomial $\Phi_{3,1}(w) = w+1$. Up to conjugation, $\pi$ admits a single nonabelian parabolic representation,
$$x\mapsto\begin{pmatrix} 1 & 1 \\  0 & 1\end{pmatrix},\ y\mapsto \begin{pmatrix} 1 & 0 \\ -1 & 1\end{pmatrix}.$$
Using Equation \ref{compute} the total $\g_\Phi$-twisted Alexander polynomial of $k$ is seen to be $t^2+1$.  

The based $\g_\Phi$-twisted Alexander module ${\cal A}_\g^0$ is isomorphic to 
$$\< 1\otimes v \otimes y \mid t \otimes vX \otimes y = 1\otimes v \otimes y + t^2 \otimes vYX \otimes y\>,$$
where $v$ ranges over a basis for $\Z^2$. The relations can be replaced by 
$$t^2 \otimes v \otimes y = t\otimes v \bar X\bar Y X\otimes y - 1\otimes v \bar X \bar Y  \otimes y,$$
where \ $\bar {}$ \ denotes inversion. 
As a $\Z[t^{\pm 1}]$-module, $\A_{\g_\Phi}^0$ is isomorphic to $\Z^4$ with action of $t$ given by right multiplication  by the matrix 
$$M=\begin{pmatrix} 0 & I\\  -\bar X\bar Y& \bar X\bar Y X\end{pmatrix}=\begin{pmatrix} 0 & 0 &1&0\\  0 & 0 &0&1\\ 0 & 1 & 0 & -1\\ -1&-1&1& 2\end{pmatrix}.$$
The dual group $\hat \A^0_{\g_\Phi}$, which is topologically conjugate to ${\rm Col}_{\g_\Phi}^0$, is a 4-torus $\T^4$ with shift $\s$ given by left multiplication by $M$, and entropy zero. We discuss general torus knots in Section 7. 

We remark that the (untwisted) coloring dynamical system ${\rm Col}^0$ is a 2-torus $\T^2$ with shift 
$$\begin{pmatrix} 0 & 1 \\ -1 & 1 \end{pmatrix}.$$
\end{example} 

\begin{example} \label{figure8} We consider another fibered knot, the figure-eight knot $k= 4_1$ (``Listing's knot"). Its group has presentation $\pi= \<x, y \mid y x \bar y x y= x y \bar x y x \>$ and Riley polynomial $\Phi_{5,3}(w)=w^2-w+1$. Up to conjugation, $\pi$ admits two nonabelian parabolic representations
$$x\mapsto\begin{pmatrix} 1 & 1 \\  0 & 1\end{pmatrix},\ y\mapsto \begin{pmatrix} 1 & 0 \\ w & 1\end{pmatrix},$$
where $w = (1 \pm \sqrt {-3})/2$. (The representation with $w=(1 - \sqrt {-3})/2$ projects to the discrete faithful representation corresponding to the complete hyperbolic structure on the figure-eight knot complement.) 
Using (\ref{compute}), the total $\g_\Phi$-twisted Alexander polynomial of $k$ is seen to be $(t^2-4 t+1)^2$.  

The dynamical system ${\rm Col}_{\g_\Phi}^0$ is an 8-torus $\T^8$. The shift $\s$ is described by 
$$M=\begin{pmatrix} 0 & 0 & 0 & 0 &  1 & 0 & 0 & 0\\
                                  0 & 0 & 0 & 0  & 0 & 1 & 0 & 0\\
                                  0 & 0 & 0 & 0 &  0 & 0 & 1 & 0 \\
                                  0 & 0 & 0 & 0 & 0 & 0 & 0 & 1  \\
                                  -2 & 0&2&-1&4&1&-2&1\\
                                  0&-2&1&1&-1&5&-1&-1\\
                                  1&-1&-1&1&-2&2&2&-1\\
                                  1&0&-1&0&-2&0&1&1\end{pmatrix}.$$
The entropy of the shift is $2 m(t^2-4t+1)$, approximately $2.63392$.                             
\end{example}

\begin{example} The group of $k=5_2$ has presentation
$\< x, y \mid xyx\bar y \bar x y x =  y xyx\bar y \bar x y\>$ and Riley
polynomial $\Phi_{7,3}(w) = w^3+ w^2+ 2 w +1$.  The based $\g_\Phi$-twisted Alexander module $\A_{\g_\Phi}^0$ has a presentation of the form 
$$\<1\otimes v \otimes y \mid t^2 \otimes v P \otimes y = 
t \otimes v Q \otimes y + 1 \otimes v R \otimes y \>,$$
where $v$ ranges over a basis for $\Z^6$, and $P, Q$ and $R$ are the $6 \times 6$ integral matrices
$$P= XYX\bar Y+ Y X \bar Y \bar X Y X,\ Q= -I-Y X \bar Y \bar X,\ R= X + XYX\bar Y \bar X+Y X \bar Y.$$

The module can be expressed as
$$\cdots  \oplus_{\Z^{12}} \Z^{12} \oplus_{\Z^{12}} \Z^{12} \oplus_{\Z^{12}} \cdots,$$
where the amalgamating maps $g$ and $f$ are given by right multiplication by the matrices
$$ G = \begin{pmatrix} 0 & I\\ R& Q \end{pmatrix},\  F= \begin{pmatrix} I & 0 \\ 0 & P \end{pmatrix}.$$
 The images of $f$ and $g$ have index 25, and the total twisted Alexander polynomial is 
$$\D_{k, \g_\Phi} = {\rm det}(G-t F) = 25 t^6 - 104 t^5+ 219 t^4- 272 t^3+ 219 t^2 - 104 t + 25.$$
If replace $\Z$ by $\Z[1/5]$, then $f, g$ are invertible.  The dynamical system
$({\rm Col}_{\g_\Phi}^0,\s)$ is conjugate to an automorphism of the 12-dimensional solenoid $\widehat {\Z[1/5]^{12}}$ with entropy $m(\D_{k, \g_\Phi})$, approximately
$\log 25 + \log 1.82996 = 3.82317.$

 \end{example}

\begin{example} Riley concludes \cite{riley1} with ``an account of the representations of our favourite knot," a knot $k$ that ``has a confluence of noteworthy properties." Riley observed, for example, that $k$ is  the equatorial cross-section  of an unknotted 2-sphere in ${\mathbb R}^4$. His guess that $k$ is S-equivalent to the square knot was confirmed in \cite{hsi}. Hence abelian invariants (untwisted Alexander polynomials, homology of cyclic and branched covers) cannot distinguish $k$ from the square knot. 

A diagram of $k$ with Wirtinger generators for its group appears in Figure \ref{rileyknot}. Riley noted that, for any complex number $w$, the assignment
$$x_0\mapsto\begin{pmatrix} 1 & 1\\  0 & 1\end{pmatrix},\ x_1\mapsto \begin{pmatrix} 1 & 0 \\ -1 & 1\end{pmatrix}$$
$$x_2\mapsto\begin{pmatrix} 1-w & w^2\\  -1 & w+1\end{pmatrix},\ x_3\mapsto \begin{pmatrix} 1 & 0 \\ -w^2 & 1\end{pmatrix},$$
determines a nonabelian parabolic representation $\g$. 

\begin{figure}
\begin{center}
\includegraphics[height=2.5 in]{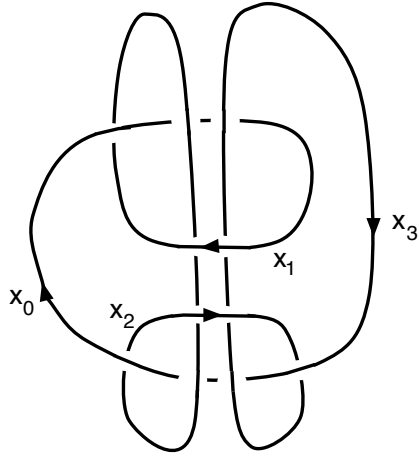}
\caption{Riley's knot}
\label{rileyknot}
\end{center}
\end{figure}

For any $w$, the determinant of the $3 \times 3$ Jacobian matrix
$$\biggl ( {  {\partial x_i} \over {\partial x_j} }\biggr)_{1 \le i, j \le 3}$$
with entries evaluated at $\g$ gives the same result, $(t-1)^6(t^2+1)$. If we let $w$ be an integer, then we obtain a (total) representation in ${\rm SL}_2 \Z$. The $\g$-twisted Alexander polynomial of $k$ is $\D_{k, \g}(t) = (t-1)^4 (t^2+1)$. We show that this polynomial does not arise from the square knot, for any nonabelian parabolic representation of its group.

Consider the square knot $k'$ in Figure \ref{squareknot}, with Wirtinger generators 
$x_0, x_1, x_2$ indicated. Its group has presentation 
$$\<x_0, x_1, x_2 \mid x_1 x_0 x_1 = x_0 x_1 x_0,\  x_2 x_0 x_2 = x_0 x_2 x_0 \>.$$ 
As shown in \cite{riley1}, any nonabelian parabolic representation is conjugate to one of the form:
$$x_0\mapsto\begin{pmatrix} 1 & 1\\  0 & 1\end{pmatrix},\ x_1\mapsto \begin{pmatrix} 1 & 0 \\ w & 1\end{pmatrix}, \ x_2\mapsto\begin{pmatrix} 1+u v & v^2\\  -u^2 & 1-u v\end{pmatrix}.$$
One easily verifies that $w=-1$ while $u=\pm 1$ or $u=0$. 
If $u=\pm 1$, then $v$ is arbitrary, and the twisted Alexander polynomial is 
$(t- 1)^2(t^2+1)^2$.  
If $u=0$,  then $v= 1$, and the polynomial is $(t^2-t+1)^2(t^2+1)$.
\end{example}
\begin{figure}
\begin{center}
\includegraphics[height=1.5 in]{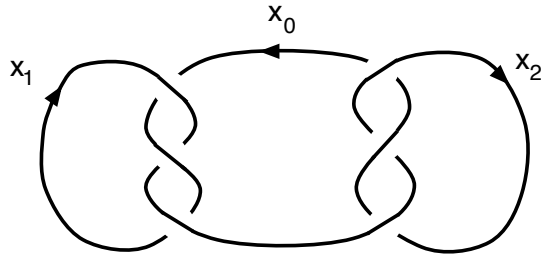}
\caption{Square knot}
\label{squareknot}
\end{center}
\end{figure}

\begin{example} \label{whiteheadex} Consider the Whitehead link $\ell$ in Figure \ref{whitehead} with Wirtinger generators indicated. Its group $\pi$ has presentation 
$\< x, y \mid  w y = y w\>,$
where $w= \bar x \bar y x y x \bar y \bar x.$ 

The associated Riley polynomial 
is $\Phi(w)= w^2+ 2 w +2$. Let $\g_\Phi: \pi \to
{\rm GL}_4 \Z$ denote the total representation. It is a straightforward
matter to check that the based $\g_\Phi$-twisted Alexander module is generated by the $1$-chains $1 \otimes e_i \otimes y\  (1\le i \le 4)$ with relators
\begin{equation*}\begin{split}
-1\otimes &v \bar X \bar Y \otimes y+
t_1 \otimes v \bar X \bar Y X \otimes y - 
t_1^2 \otimes v \bar X \bar Y XY X \bar Y \otimes y \\
&+ t_1 \otimes v \bar X \bar Y X Y X \bar Y \bar X \otimes y -
 t_1 t_2 \otimes v \otimes y + t_2 \otimes  v Y\bar X \bar Y \otimes y\\
 & - t_1 t_2 \otimes v Y \bar X \bar Y X \otimes y + t_1^2 t_2 \otimes v Y\bar X \bar Y X Y X \bar Y \otimes y.
\end{split} \end{equation*}

Proposition \ref{finite} below implies that $H_0(X', V)$ is finite. Hence Proposition \ref{alexcompute} enables us to  compute the $\g_\Phi$-twisted Alexander polynomial of $\ell$. It is given by 
the following array in which the number in position $i, j$ is the coefficient of $t_1^i t_2^j.$
$$\begin{array} {ccccc} 1 & -4 & 6 & -4 &1\\ 
                                      -4 & 12 & -16 & 12 & -4 \\
                                      6 & -16 & 28 & -16 & 6 \\
                                      -4 & 12 & -16 & 12 & -4\\
                                      1 & -4 & 6 & -4 & 1 \end{array}$$
                                      
By comparison, the untwisted Alexander polynomial is $(t_1 -1)(t_2-1)$, with coefficient array:
$$\begin{array} {cc} -1 & 1 \\ 1 & -1 \end{array}$$
\end{example}

\begin{figure}
\begin{center}
\includegraphics[height=2 in]{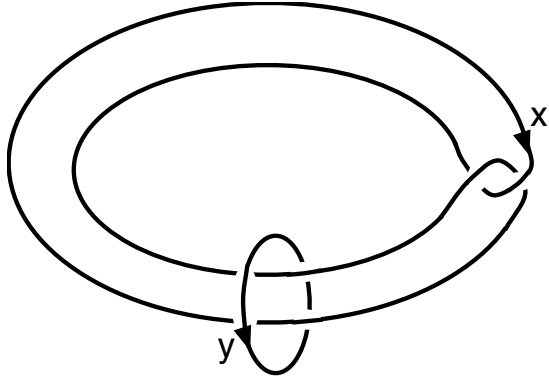}
\caption{Whitehead link}
\label{whitehead}
\end{center}
\end{figure}

While the untwisted Alexander polynomial is a product of cyclotomic polynomials, with trivial Mahler measure, the twisted polynomial is not.
The logarithmic Mahler measure of $\D_{\ell, \g_\Phi}$ is approximately 2.5. In particular, Theorem \ref{mainthm} ensures the growth of torsion of $\g_\Phi$-twisted homology in the cyclic covers of the link complement.

\section{Evaluating the twisted Alexander polynomial for 2-bridge knots and links}

The following establishes the nontriviality of twisted Alexander polynomials for any 2-bridge knot and nonabelian parabolic representation of that group. 

\begin{theorem}\label{eval} Let $k$ be a 2-bridge knot, and let $\g_\phi$ be a total representation associated to a nonabelian parabolic representation $\g: \pi \to {\rm SL}_2{\mathbb C}$. Then $|\D_{k, \g_\phi}(1)| = 2^{{\rm deg}\,\phi}$. \end{theorem}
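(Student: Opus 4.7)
The plan is to reduce to a single $\mathrm{SL}_2\mathbb{C}$ representation and compute $\D_{k,\g_w}(1)$ explicitly from the two-generator one-relator presentation of the 2-bridge knot group, then recover the total statement by taking a norm.

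First, by the multiplicative factorization of the total twisted Alexander polynomial noted in Section~\ref{examples}, we have $\D_{k,\g_\phi}(t) = \prod_{w :\, \phi(w)=0} \D_{k,\g_w}(t)$, where each $\g_w \colon \pi \to \mathrm{SL}_2\mathbb{C}$ is a nonabelian parabolic representation associated to a root $w$ of $\phi$. Writing $\g_w(W) = \bigl(\begin{smallmatrix} 0 & p \\ q & d \end{smallmatrix}\bigr)$, where the vanishing of the $(1,1)$-entry is the Riley condition $\phi(w) = 0$, a direct comparison of $\g_w(W)X = Y\g_w(W)$ with $\det \g_w(W) = 1$ forces $q = pw$ and $p^2 w = -1$; in particular $p = \g_w(W)_{12}$ is a unit in $\Z[w]/(\phi)$ of norm $\pm 1$, since $w$ is an algebraic unit by Remark \ref{companion}(1). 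Hence the theorem follows from the claim $\D_{k,\g_w}(1) = \pm 2\,\g_w(W)_{12}$ by taking the norm from $\Z[w]/(\phi)$ to $\Z$: one obtains $|\D_{k,\g_\phi}(1)| = 2^{\deg \phi}$.

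Second, to set up the computation for a single representation: the invertibility of $w$ makes $X - Y$ invertible, so $H_0(X'; V_{\g_w}) = 0$, and Proposition~\ref{alexcompute} gives $\D_{k,\g_w}(t) = D_{k,\g_w}(t)/\det(tX-I) = D_{k,\g_w}(t)/(t-1)^2$. Using the 2-bridge presentation $\pi = \langle x, y \mid WxW^{-1}y^{-1}\rangle$ with $x$ as the base arc, Fox calculus yields $D_{k,\g_w}(t) = \det[(I - tY)\tilde M(t) - I]$, where $\tilde M(t) = \Phi(\partial W/\partial y)$. Expanding as a $2\times 2$ determinant gives
$$D_{k,\g_w}(t) = (1-t)^2 \det \tilde M(t) - (1-t)\,\mathrm{tr}\,\tilde M(t) + tw\,\tilde M(t)_{12} + 1.$$
Divisibility by $(t-1)^2$ forces $w\tilde M(1)_{12} = -1$ and $\mathrm{tr}\,\tilde M(1) + w\tilde M'(1)_{12} = 1$, and $\D_{k,\g_w}(1)$ is the second-order Taylor coefficient of the expression above at $t = 1$. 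Equivalently, via the long exact sequence coming from multiplication by $t - 1$, one can identify $|\D_{k,\g_w}(1)|$ with $|H_1(X; V_{\g_w})|$ (after verifying that $H_0(X;V)$ and $H_2(X;V)$ also vanish for nonabelian $\g_w$); this reframes the problem as computing a $2\times 2$ determinant built from $\g_w(W)$ and the entries of $\g_w(\partial W/\partial x), \g_w(\partial W/\partial y)$, whose structure is constrained by the Fox--Jacobian identity $\g_w(\partial r/\partial x)(X - I) + \g_w(\partial r/\partial y)(Y - I) = \g_w(W) - I$ coming from $\g_w(r) = I$.

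The main obstacle is the final algebraic step: establishing the identity $\D_{k,\g_w}(1) = \pm 2\,\g_w(W)_{12}$ for an arbitrary 2-bridge word $W$. It is verified by direct calculation in the examples of Section~\ref{examples} (trefoil, figure-eight, $5_2$). For the general case, the natural strategy is induction on the length of $W$, equivalently on the length of the continued-fraction expansion of $\b/\a$ defining $k(\a, \b)$. The inductive step should exploit the recursive syllable structure of 2-bridge words together with the Cayley--Hamilton relation $\g_w(W)^{-1} = \mathrm{tr}(\g_w(W))\cdot I - \g_w(W)$, which linearly expresses all powers of $\g_w(W)$ and therefore controls how the relevant entries and Fox derivatives transform when $W$ is extended. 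Once the identity is established, the reduction in the first paragraph yields the theorem.
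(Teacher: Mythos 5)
Your proposal takes a genuinely different route from the paper: the paper's proof is entirely homological, resting on Lefschetz duality for twisted homology, a $\Z/2$-coefficient Euler characteristic computation, and a careful analysis of the boundary maps $H_1(X,T;V)\to H_0(T;V)$ and $H_1(T;V)\to H_1(X;V)$ to show that $H_1(X;V)$ is $2$-torsion of rank $m$. You instead propose to reduce to a single $\mathrm{SL}_2\C$ representation $\g_w$, establish an explicit determinant identity $\D_{k,\g_w}(1) = \pm 2\,\g_w(W)_{12}$ via Fox calculus, and recover the integer statement by taking a norm. The reduction framework is sound: the multiplicative factorization over roots of $\phi$ is noted in Section \ref{examples}, the observation that $\g_w(W)_{12}$ is a unit of $\Z[w]$ (via $p^2w = -1$ and $w$ an algebraic unit) is correct, and the identification $|\D_{k,\g_w}(1)| = |H_1(X;V_{\g_w})|$ via a square presentation is parallel to Lemma \ref{square} and Corollary \ref{D(1)} in the paper.

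However, there is a genuine gap at exactly the point you flag: the identity $\D_{k,\g_w}(1) = \pm 2\,\g_w(W)_{12}$ is stated, verified in three examples, but not proven. The suggested induction on the length of the $2$-bridge word $W$ is only a sketch, and it is not clear it closes. The difficulty is that $2$-bridge normal-form words do not extend by a simple syllable recursion that propagates both the entries of $\g_w(W)$ and the relevant Fox partial derivative expressions in a controllable way; the Cayley--Hamilton trick controls powers of a \emph{fixed} matrix but not the word grown one letter at a time, since each new letter is $X^{\pm1}$ or $Y^{\pm1}$ rather than $\g_w(W)$. The Fox--Jacobian constraint $(\partial r/\partial x)^{\g}(X-I) + (\partial r/\partial y)^{\g}(Y-I) = 0$ (note: the right-hand side is $0$, not $\g_w(W)-I$, since $r=1$ in $\pi$; the paper's Lemma \ref{partials} relies on this) gives linear information about the Fox derivatives but does not by itself pin down the second-order Taylor coefficient at $t=1$ that you need. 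Until that inductive step is carried out in full, the core of the theorem remains unestablished. The paper sidesteps this entirely by working with the total representation $\g_\phi$ and computing ranks over $\Z/2$, where Lemma \ref{partials} forces enough of the boundary matrices to vanish modulo $2$ that the answer falls out of Euler characteristic and duality, with no induction on word length needed.
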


\begin{cor} \label{nontrivial} Let $k$ be a $2$-bridge knot. For any nonabelian parabolic representation $\g$, the $\g$-twisted Alexander polynomial $\D_{k, \g}(t)$ is nontrivial. \end{cor}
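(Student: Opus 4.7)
The plan is to use Equation~(\ref{compute}), which gives $\D_{k, \g_\phi}(t) = D_{k, \g_\phi}(t)/(t-1)^{2\deg \phi}$, where $D_{k, \g_\phi}(t) = \det \Phi(\partial r/\partial y)$. Applying Fox calculus to the Wirtinger relator $r = W x W^{-1} y^{-1}$ gives
\[
\Phi(\partial r/\partial y) = (I - tY)\,A(t) - I, \qquad A(t) := \Phi(\partial W/\partial y),
\]
and the plan is to expand the right side about $t = 1$ and extract the leading nonvanishing coefficient, which equals $\pm \D_{k, \g_\phi}(1)$.

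The first key simplification is that the blocks of $X$, $Y$, and $A(t)$ all lie in the commutative subring $\Z[C] \cong \Z[w]$, so the integer $(2N) \times (2N)$ determinant collapses to the $\Z[w]/\Z$-norm of the determinant of a $2 \times 2$ matrix over $\Z[w][t^{\pm 1}]$. A direct computation in the underlying $2$-dimensional representation $\g_w$ yields
\[
D_{k, \g_\phi}(t) = \mathrm{Norm}_{\Z[w]/\Z}\!\bigl((1-t)^2 \det A(t) - (1-t)\,\mathrm{tr}\, A(t) + t\,w\, A_{12}(t) + 1\bigr).
\]

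I would then show that the scalar inside the norm, viewed in $(\Z[w]/(\phi(w)))[t^{\pm 1}]$, vanishes to order $2$ at $t = 1$. The Fox fundamental identity $\partial W/\partial x \cdot (x - 1) + \partial W/\partial y \cdot (y - 1) = W - 1$ in $\Z[\pi]$, combined with the Wirtinger relation $W x W^{-1} = y$, gives the entry-wise identities $w A_{12}(1) = m_{11}(w) - 1 \equiv -1 \pmod{\phi}$ and $A_{22}(1) \equiv m_{12} \pmod{\phi}$, where $m_{ij}$ denote the entries of $\g_w(W)$. The first identity kills the constant term of the Taylor expansion; vanishing of the linear term is forced by the integrality of $D_{k, \g_\phi}(t)$, which requires order-2 vanishing at $t = 1$ for each Galois conjugate of $w$. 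The quadratic coefficient is thus $\D_{k,\g_w}(1) \in \Z[w]/(\phi)$.

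The crucial step is to show $\D_{k, \g_w}(1) = \pm 2\, m_{12}$ in $\Z[w]/(\phi)$. Since $m_{12}^2 = -w^{-1}$ with $w$ a unit (Remark \ref{companion}(1)), $m_{12}$ is itself a unit in $\Z[w]/(\phi)$, so $|\mathrm{Norm}(\pm 2 m_{12})| = 2^{\deg \phi}$, which together with $|\D_{k,\g_\phi}(1)| = |\mathrm{Norm}(\D_{k,\g_w}(1))|$ yields the theorem. The main obstacle is verifying this identity: the plan is to exploit the symmetry $\overline{W} = W$ of the $2$-bridge word under the antihomomorphism of $F_2$ that reverses a word and swaps $x \leftrightarrow y$ (arising from the topological involution exchanging the two bridges of $k$), which constrains the otherwise-free Fox entries $A_{11}(1)$ and $G_{12}(1)$ appearing in the quadratic coefficient and forces the stated collapse modulo $\phi(w)$.
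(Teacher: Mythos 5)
Your proposal takes a genuinely different route from the paper, but it has a concrete unresolved gap. The paper's proof of this corollary is short: it takes Theorem~\ref{eval} ($|\D_{k,\g_\phi}(1)| = 2^{\deg\phi}$) as given, and then observes that if $\D_{k,\g}$ were trivial, so would be each Galois conjugate $\D_{k,\g_{w_i}}$ (the Galois group of $\Q(w)/\Q$ acts transitively on the roots of $\phi$), hence so would be their product $\D_{k,\g_\phi}$, contradicting Theorem~\ref{eval}. The paper, in turn, proves Theorem~\ref{eval} topologically, via the square-presentation Lemma~\ref{square}, a Wang sequence identifying $|\D_{k,\g_\phi}(1)|$ with $|H_1(X;V)|$, Lefschetz duality over $\Z/2$, computation of $H_*(T;V)$ from Lemma~\ref{partials}, and a twisted Euler characteristic count. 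Your approach bypasses all of this and instead attempts to compute $\D_{k,\g_w}(1)$ directly in $\Z[w]/(\phi)$ by Fox calculus. Your reduction to the $2\times 2$ scalar $(1-t)^2\det A(t) - (1-t)\operatorname{tr}A(t) + twA_{12}(t) + 1$ is correct, the vanishing of the constant term follows as you say from Riley's condition $m_{11}\equiv 0$, and the integrality/Galois argument for order-$2$ vanishing at $t=1$ is sound (each conjugate factor vanishes to the same order, and the total must be divisible by $(t-1)^{2m}$). If completed, this would reprove Theorem~\ref{eval} (by taking norms) and give the corollary directly, since $\pm 2m_{12}$ has norm $\pm 2^m$ and so cannot be a unit.

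The gap is exactly the step you flag as ``the main obstacle'': the identity $\D_{k,\g_w}(1) = \pm 2m_{12}$ in $\Z[w]/(\phi)$. You have a plausible strategy (exploit the palindromic symmetry $\overline{W}=W$ of the $2$-bridge word under the reversal-plus-swap antiautomorphism of $F_2$, which constrains the Fox entries $A_{11}(1)$ and the leftover $B$-entries that appear in the quadratic Taylor coefficient), and numerical checks support it (trefoil: $m_{12}=1$, $\D(1)=2$; figure-eight: $\D_{\g_w}(1)=-2$, $|N(m_{12})|=1$), but the verification is not carried out and is not obviously short. As written, the proposal does not constitute a proof of the corollary. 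It is worth noting that the statement you need is much easier to obtain if you simply invoke Theorem~\ref{eval}, which is stated immediately before the corollary and is available for use; the Galois-transitivity argument is then a two-line deduction. Your computational route, if completed, would be a nice alternative proof of Theorem~\ref{eval} itself, but as a proof of the corollary it is doing far more work than necessary and still leaves the key identity open.
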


\begin{proof} Assume after conjugation that $\g$ has the standard form given by (\ref {standard}) above. Let $\phi$ be the minimal polynomial of $w$, an irreducible factor of the Riley polynomial. 
Denote its roots by $w_1= w, w_2, \ldots, w_m$, where $m$ is the degree of $\phi$. The Galois group of $\Q[w]$ over $\Q$ is transitive on the roots. It follows that if $\D_{k, \g}(t)$ is trivial, then it is trivial 
whenever any $w_i, (i>1)$ is substituted for $w$ in (\ref{standard}). However, the total twisted polynomial $\D_{k, \g_\phi}(t)$ is the product of such polynomials. Hence $\D_{k, \g}(t)$ is nontrivial. 
\end{proof}

\begin{remark} Corollary \ref{nontrivial} implies that the Wada invariant of any 2-bridge knot corresponding to a nonabelian parabolic representation is nontrivial. 

Whether or not the Wada invariant of a knot corresponding to any nonabelian representation is nontrivial is unknown (cf. \cite{morifuji}, \cite{friedl}). One might suspect a negative answer in view of  M. Suzuki's result \cite{suzuki} that the Lawrence-Krammer representation of the braid group $B_4$,  a faithful nonabelian representation, yields a trivial Wada invariant.

\end{remark} 

\begin{conj} \label{Conj} For any $2$-bridge knot $k$ and total representation $\g_\phi$ as in Theorem \ref{eval}, $|\D_{k, \g_\phi}(-1)| = 2^{{\rm deg}\,\phi} \mu^2,$ for some integer $\mu$. \end{conj}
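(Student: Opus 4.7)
My plan is to relate $|\D_{k, \g_\phi}(-1)|$ to the order of twisted homology of the $2$-fold branched cover $M_2 = L(\alpha, \beta)$ (a lens space, since $k = k(\alpha, \beta)$ is $2$-bridge), and then apply Poincar\'e duality via a symplectic structure on $V_\phi$ inherited from the ${\rm SL}_2 \C$ origin of the representation.

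First, by Lemma \ref{structure} we have $\D_{k, \g_\phi}(-1) = \det(g + f)$, where $f, g$ are the amalgamation maps of the decomposition of $\A_{\g_\phi}^0$ from that lemma; one checks directly in Example \ref{trefoil} that $\det(g+f) = 2$, in Example \ref{figure8} that $\det(g+f) = 36 = 2^2 \cdot 3^2$, and in the $5_2$ example that $\det(g+f) = 968 = 2^3 \cdot 11^2$, consistent with $\mu = 1, 3, 11$. To extract structure, I would use a Milnor-type formula expressing the product of $|\D_{k, \g_\phi}(\zeta)|$ over $\zeta^2 = 1$ as the $\Z$-torsion order of $H_1(X_2; V_\phi)$ (corrected by $H_0$ contributions, which vanish here by the computation in Section \ref{examples}). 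Combined with Theorem \ref{eval}'s value $|\D_{k, \g_\phi}(1)| = 2^{{\rm deg}\,\phi}$, this would identify $|\D_{k, \g_\phi}(-1)|$ with $|H_1(M_2; V_\phi)_{\rm tors}|$ up to a factor of $2^{{\rm deg}\,\phi}$ absorbed by the branching correction $X_2 \to M_2$.

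Second, I would equip $V_\phi = \Z^{2 \, {\rm deg}\,\phi}$ with a $\g_\phi$-invariant skew-symmetric bilinear form. The underlying ${\rm SL}_2 \C$ matrices $\g(x), \g(y)$ preserve the standard symplectic form on $\C^2$. To lift this to the total representation one requires a symmetric matrix $T$ conjugating the companion matrix $C$ of $\phi$ to its transpose $C^T$, yielding a block symplectic form
\[
\Omega = \begin{pmatrix} 0 & T \\ -T & 0 \end{pmatrix}
\]
on $V_\phi$ that is preserved by both $\g_\phi(x)$ and $\g_\phi(y)$. Such a $T$ exists over $\Q$, and since $C$ is unimodular over $\Z$ by Remark \ref{companion}(1), one can take $T$ defined over $\Z[1/N]$ for a suitable odd $N$. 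Poincar\'e duality on the closed oriented $3$-manifold $M_2$ then yields a non-degenerate skew-symmetric linking pairing on $H_1(M_2; V_\phi)_{\rm tors}$; such a pairing on a finite abelian group forces its order to be a perfect square $\mu^2$.

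The chief obstacle is the integrality of the symplectic structure: the symmetric conjugator $T$ between $C$ and $C^T$ need not exist over $\Z$, and inverting primes to land in $\Z[1/N]$ must be done without contaminating the $2$-primary part of the torsion count, so that the factor $2^{{\rm deg}\,\phi}$ emerges cleanly. A parallel subtlety is pinning down the exact power of $2$: one must carefully assemble the contributions of branching, of the (vanishing) $H_0(X'; V_\phi)$ at $t=-1$, and of the square pairing, then confirm they combine to $2^{{\rm deg}\,\phi}$ rather than another power. The values $\mu = 1, 3, 11$ from the trefoil, figure-eight, and $5_2$ examples of Section \ref{examples} serve as sanity checks throughout.
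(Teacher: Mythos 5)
The statement you set out to prove is Conjecture~\ref{Conj}, which the paper poses as \emph{open}: no proof of it appears anywhere in the paper. The authors remark only that Hirasawa and Murasugi have verified it in the special case where the $2$-bridge knot group maps onto a torus knot group (preserving meridians) and $\gamma$ is the pullback of a parabolic representation. There is therefore nothing in the paper to compare your argument against, and no implicit claim that the statement is within reach by the tools of Sections 5--6.

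As to your sketch itself, the numerical consistency checks are correct (you get $\mu = 1, 3, 11$ for $3_1$, $4_1$, $5_2$), and the idea of combining a Milnor-style evaluation of $\prod_{\zeta^2=1}|\D_{k,\g_\phi}(\zeta)|$ with a twisted linking form is a reasonable strategy, but two gaps are more serious than you indicate. First, $H_1(M_2; V_\phi)$ is not automatically defined: the paper stresses (in the paragraph following Corollary~\ref{mainthmknots}) that $\g$ does \emph{not} induce representations of fundamental groups of \emph{branched} covers, and indeed for a nonabelian parabolic $\g$ the restriction to $\pi_1 X_2$ need not factor through $\pi_1 M_2$, since a meridian squared need not lie in the kernel. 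Any argument must therefore be carried out entirely on the unbranched cover $X_2$, whose boundary contributions (a Lefschetz rather than Poincar\'e duality situation) must be tracked explicitly; the phrase ``absorbed by the branching correction'' papers over exactly this. Second, even granting a $\g_\phi$-invariant integral skew form and a nondegenerate linking pairing on torsion, the inference ``skew-symmetric $\Rightarrow$ order is a perfect square'' is only clean for odd-order groups; for $2$-primary torsion, skew-symmetry (as opposed to the stronger alternating condition) is nearly vacuous, and the whole content of the conjectured formula lies in the $2$-primary part. Isolating the precise power $2^{\deg\phi}$, and proving the quotient is a square, is exactly where the difficulty sits; the Section~6 computations show $H_1(X;V)\cong(\Z/2)^{\deg\phi}$ at $t=1$, but no analogous structural control at $t=-1$ is established in the paper, which is presumably why the authors left it a conjecture.
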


\begin{remark} Theorem \ref{eval} and Conjecture \ref{Conj} have been shown by M. Hirasawa and K. Murasugi \cite{hm} in the case that the group of $k$ maps onto a torus knot group, preserving meridian generators, and $\g$ is the pullback of a nonabelian parabolic representation. \end{remark}

In order to prove Theorem \ref{eval}, we consider an arbitrary $2$-bridge knot $k$ and total representation $\g_\phi$ associated to 
a nonabelian parabolic representation $\g: \pi \to {\rm SL}_2{\mathbb C}$. Denote the degree of $\phi$ by $m$. The following lemma follows from the proof of Corollary \ref{mainthmknots}. However, the direct argument below yields extra, useful information. 

\begin{lemma} \label{square} The $\Z[t^{\pm 1}]$-module $H_1(X'; V)$ has a square matrix presentation. \end{lemma}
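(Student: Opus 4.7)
The plan is to exploit the two-generator one-relator presentation $\pi=\langle x,y\mid Wx=yW\rangle$ of a 2-bridge knot group, which yields a twisted cellular chain complex with three nontrivial terms of ranks $N$, $2N$, $N$ (where $N=2\deg\phi$):
\[ 0\to R[t^{\pm 1}]^N \xrightarrow{\partial_2} R[t^{\pm 1}]^{2N} \xrightarrow{\partial_1} R[t^{\pm 1}]^N \to 0. \]
Writing $\partial_1=(A\ B)$ with $A=tX-I$, $B=tY-I$, and $\partial_2=\binom{P}{Q}$ built from the twisted Fox Jacobians of $r=WxW^{-1}y^{-1}$, the based module $\A_{\g_\phi}^0$ is by construction the cokernel of $b=Q$, so it is already presented by the single $N\times N$ matrix $Q$.

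The strategy is to follow the proof of Corollary \ref{mainthmknots}: by the Hillman criterion cited there, a finitely generated torsion $\Z[t^{\pm 1}]$-module has a square presentation matrix if and only if it has no nonzero finite submodule. The exact sequence
\[ 0\to H_1(X';V)\to \A_{\g_\phi}^0\to \ker f\to 0 \]
from Section \ref{twistedhomology} realizes $H_1(X';V)$ as a submodule of $\A_{\g_\phi}^0$. Since $\A_{\g_\phi}^0$ has a square presentation, it has no nonzero finite submodule, a property that passes to all submodules; hence $H_1(X';V)$ also has no nonzero finite submodule, supplying one half of the criterion.

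It remains to show $H_1(X';V)$ is $\Z[t^{\pm 1}]$-torsion, or equivalently that $D_{k,\g_\phi}=\det Q\neq 0$, so that $\A_{\g_\phi}^0$ itself is torsion. The main obstacle is to prove this nonvanishing without a circular appeal to Theorem \ref{eval}. I would attempt the multiplicative decomposition $\det Q_{\g_\phi}=\prod_{\phi(w)=0}\det Q_{\g_w}$ over the Galois conjugates of $w$, then verify the nonvanishing of each 2-dimensional factor by deforming $\g_w$ toward the abelian representation (as $w\to 0$): the twisted Fox Jacobian reduces there to a power of the classical Alexander polynomial $\D_k(t)$, which is nonzero for any 2-bridge knot. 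The subtlety lies in ensuring that the resulting polynomial in the formal parameter $w$ is not divisible by the Riley polynomial $\phi(w)$; the fact that $\phi$ has nonzero constant term (Remark \ref{companion}(1)) and that the $w=0$ specialization is nonzero as a polynomial in $t$ should suffice, though a careful degree bookkeeping on the two-bridge relation $r$ is needed. Once $\det Q\neq 0$ is in hand, the Hillman criterion produces the square presentation of $H_1(X';V)$, and the explicit form of $Q$ furnishes an explicit presentation matrix --- the ``extra, useful information'' alluded to in the paper.
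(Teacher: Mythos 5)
Your route is genuinely different from the paper's, and it runs into exactly the obstacle that makes the paper's direct argument preferable. You propose to invoke Hillman's criterion via the proof of Corollary \ref{mainthmknots} (a torsion $\Z[t^{\pm 1}]$-module has a square presentation iff it has no nonzero finite submodule, and the property of having no nonzero finite submodule passes from $\A_{\g_\phi}^0$ to its submodule $H_1(X';V)$). That logic is sound, but it is conditional on $D_{k,\g_\phi}\neq 0$, and this nonvanishing is precisely what you fail to establish. Your proposed deformation to $w=0$ is only a sketch: you would have to verify that the resulting polynomial in $(w,t)$ is not divisible by the Riley factor $\phi(w)$, that the $w=0$ specialization actually reduces to (a power of) the classical Alexander polynomial, and that no degree collapse occurs. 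None of this is carried out, and the statement $D_{k,\g_\phi}\neq 0$ is essentially of the same difficulty as what Theorem \ref{eval} ultimately proves (indeed it is a strict weakening of it), so there is a real danger of circularity. The paper flags this in the sentence immediately preceding the lemma: the Hillman route is available in principle, but they deliberately avoid it.

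The paper instead gives a short direct construction that needs no torsion hypothesis at all. Using the $2$-generator, $1$-relator presentation, the chain complex is $0\to \Z[t^{\pm1}]^{2m}\to \Z[t^{\pm1}]^{2m}\oplus\Z[t^{\pm1}]^{2m}\to\Z[t^{\pm1}]^{2m}\to 0$. Because the representation is parabolic and $w$ is an algebraic unit, the block matrix $X-Y=\bigl(\begin{smallmatrix}0&I\\-C&0\end{smallmatrix}\bigr)$ is unimodular, so the chains $a_i=1\otimes e_i\otimes(x-y)$ have $\partial_1$-images generating $C_0$; a surjective endomorphism of a finitely generated module is an isomorphism, so $\partial_1$ restricted to $\mathrm{span}(a_i)$ is an isomorphism onto $C_0$. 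Subtracting the unique preimages $u_i$ from $1\otimes e_i\otimes y$ produces a second basis $a_1,\dots,a_{2m},b_1,\dots,b_{2m}$ of $C_1$ in which $b_1,\dots,b_{2m}$ freely generate the cycles; writing $\partial_2$ in the $b$-basis gives the $2m\times 2m$ presentation matrix $A$ of $H_1(X';V)$. Beyond sidestepping the nonvanishing question, this construction yields the explicit matrix $A$, which is what Corollary \ref{D(1)} needs (the specialization $A(1)$ presents $H_1(X;V)$) --- the ``extra, useful information'' the paper alludes to. If you want to salvage your approach you would need an independent proof that $D_{k,\g_\phi}\neq 0$; absent that, the argument is incomplete.
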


\begin{proof} Consider the chain complex (\ref{chaincplx}):
$$0 \to C_2(X'; V)\ {\buildrel \partial_2 \over \longrightarrow}\   C_1(X'; V)\  {\buildrel \partial_1 \over \longrightarrow}\   C_0(X'; V)\ \to 0.$$
It can be written as 
$$0 \to \Z[t^{\pm 1}]^{2m} \ {\buildrel \partial_2 \over \longrightarrow}\    \Z[t^{\pm 1}]^{2m} \oplus  \Z[t^{\pm 1}]^{2m}\  {\buildrel \partial_1 \over \longrightarrow}\    \Z[t^{\pm 1}]^{2m}\ \to 0.$$
The module of $1$-chains is freely generated by 
$1 \otimes e_i \otimes x$ and $1 \otimes e_i \otimes y$, where $e_i$ ranges over a basis for $V = \Z^{2m}$. 

The $\partial_1$-images of the chains $a_i= 1 \otimes e_i \otimes (x - y)$ generate $C_0(X'; V)$ (see the discussion preceding Example \ref{trefoil}). For each $i = 1, \ldots, 2m$, choose a $\Z[t^{\pm 1}]$-linear combination $u_i$ of the chains $a_i$ such that $b_i = 1 \otimes e_i \otimes y - u_i$ is a cycle. Then $a_1, \ldots, a_{2m}, b_1, \ldots, b_{2m}$ is a second basis of $C_1(X'; V)$ such that $b_1, \ldots, b_{2m}$ freely generate the submodule of 1-cycles.
Hence $H_1(X'; V)$ is isomorphic to a quotient
$\Z[t^{\pm 1}]^{2m}/  \Z[t^{\pm 1}]^{2m} A$, for some square matrix $A$ of size $2m$. \end{proof} 

\begin{cor} \label{D(1)} $|\D_{k, \g_\phi}(1)|$ is equal to the order 
of the twisted homology group $H_1(X_1; V)= H_1(X; V)$. \end{cor}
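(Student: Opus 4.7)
The plan is to exploit the square presentation matrix $A(t)$ for $H_1(X'; V)$ constructed in Lemma \ref{square} and specialize it at $t=1$, identifying the resulting integer cokernel with $H_1(X; V)$ via Shapiro's Lemma.

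Recall from the proof of Lemma \ref{square} that a $\mathbb{Z}[t^{\pm 1}]$-linear change of basis of $C_1(X'; V)$ from $\{1 \otimes e_i \otimes x, 1 \otimes e_i \otimes y\}$ to $\{a_i, b_i\}_{1 \le i \le 2m}$ was chosen so that $\partial_1$ restricts to an isomorphism from the free submodule $\langle a_1, \ldots, a_{2m}\rangle$ onto $C_0(X'; V)$, while $b_1, \ldots, b_{2m}$ freely generate the module of $1$-cycles. The resulting square matrix $A(t)$ of size $2m$ presents $H_1(X'; V)$, and $\det A(t)$ equals $\Delta_{k, \gamma_\phi}(t)$ up to a unit of $\mathbb{Z}[t^{\pm 1}]$.

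Next I would specialize at $t=1$. Taking $\Lambda = \mathbb{Z}$ gives $X_\Lambda = X_1 = X$, and tensoring the chain complex $C_*(X'; V)$ over $\mathbb{Z}[t^{\pm 1}]$ with the trivial module $\mathbb{Z}$ produces the twisted chain complex $C_*(X; V)$ (this is the Shapiro isomorphism discussed in Remark \ref{shapiro}). The direct-sum decomposition $C_1(X'; V) = \langle a_i \rangle \oplus \langle b_i \rangle$ specializes to an integer decomposition $C_1(X; V) = \langle a_i(1) \rangle \oplus \langle b_i(1) \rangle$. Because Remark \ref{companion} tells us $w$ is an algebraic unit for $2$-bridge knots, the companion matrix $C$ is unimodular, so $\gamma_\phi(x) - \gamma_\phi(y) = \bigl(\begin{smallmatrix} 0 & I \\ -C & 0 \end{smallmatrix}\bigr)$ remains invertible over $\mathbb{Z}$; hence $\partial_1$ is still surjective at $t=1$, and $b_1(1), \ldots, b_{2m}(1)$ freely generate the submodule of integer $1$-cycles.

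This yields $H_1(X; V) \cong \mathbb{Z}^{2m}/A(1)\mathbb{Z}^{2m}$, so $|H_1(X; V)| = |\det A(1)| = |\Delta_{k, \gamma_\phi}(1)|$, interpreted as simultaneously infinite if $\Delta_{k, \gamma_\phi}(1) = 0$. The only real obstacle is justifying that specialization at $t=1$ preserves the structural splitting used in Lemma \ref{square}; this reduces precisely to the unimodularity of $\gamma_\phi(x) - \gamma_\phi(y)$ at $t=1$, which is guaranteed for $2$-bridge knots by Remark \ref{companion}.
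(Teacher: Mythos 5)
Your proof is correct and follows essentially the same route as the paper: both rely on the square presentation matrix $A(t)$ from Lemma \ref{square} and both conclude that $A(1)$ presents $H_1(X;V)$, giving $|H_1(X;V)| = |\det A(1)| = |\Delta_{k,\gamma_\phi}(1)|$. The paper gets there via a Wang sequence (using $H_0(X';V)=0$ to deduce $H_1(X;V) \cong H_1(X';V)/(t-1)H_1(X';V)$), whereas you specialize the chain-level splitting $C_1(X';V) = \langle a_i\rangle \oplus \langle b_i\rangle$ directly at $t=1$; this is a minor repackaging of the same argument. One small point: the ``obstacle'' you flag at the end is actually a non-issue. Once Lemma \ref{square} produces the direct-sum decomposition over $\mathbb{Z}[t^{\pm 1}]$ with $\partial_1|_{\langle a_i\rangle}$ an isomorphism, tensoring with $\mathbb{Z}$ automatically preserves the decomposition, keeps $\partial_1|_{\langle a_i(1)\rangle}$ an isomorphism, and (since $C_0$ is free, so there is no Tor term) identifies $\langle b_i(1)\rangle$ with the integer $1$-cycles. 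The unimodularity of $\gamma_\phi(x)-\gamma_\phi(y)$ is what makes Lemma \ref{square} work in the first place; you do not need to re-invoke it when specializing.
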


\begin{proof} The argument above shows that $\partial_1$ is surjective. Hence $H_0(X'; V) =0$. A Wang sequence shows that 
$$H_1(X; V) \cong H_1(X'; V)/(t-1)H_1(X'; V).$$
From Lemma \ref{square}, the matrix $A(1)$ obtained from $A$ by setting $t=1$ presents $H_1(X; V)$. The order of $H_1(X; V)$ is $|{\rm det}\ A(1)|= |\D_{k, \g_\phi}(1)|.$
\end{proof} 

In view of the corollary, we focus our attention on $H_1(X; V)$. This finitely generated abelian group is determined by the 
$\g_\phi$-twisted chain complex
\begin{equation}\label{cplx} C_*(X; V) =V\otimes_{\Z[\pi]}C_*(\tilde X), \end{equation}
just as in Section \ref{twistedhomology} but with trivial $t$-action. 

The boundary of the knot exterior $X$ is an incompressible torus $T$, and the restriction of $\g_\phi$ to $\pi_1T$ determines $\g_\phi$-twisted absolute and relative homology groups $H_*(T; V_{\g_\phi})$ and $H_*(X, T; V)$. The latter groups are defined  
by the  quotient chain complex $C_*(X, T; V)=C_*(X; V)/C_*(T; V)$ (see, for example, \cite{kl}). A long exact sequence relating the absolute and relative homology groups is found from the natural short exact sequence of chain complexes
$$0 \to C_*(T; V) \to C_*(X; V) \to C_*(X, T; V) \to 0.$$

Lefschetz duality for twisted homology is presented in 
\cite{kl} (see also \cite{kv}). We apply a version in our situation.
Tensoring the chain complex \ref{cplx} with the field $\Z/(2)$, we obtain twisted homology groups that we denote by $H_*(X; V^{(2)})$ and $H_*(X, T; V^{(2)})$.  Let $\bar V$ be the 
module $V$ with right $\pi$-action given
$$v \cdot g = v (\g_\phi(g)^{-1})^\tau,$$
where $\tau$ denotes transpose. Substituting $\bar V$ for $V$ in the chain complex \ref{cplx}, and tensoring with $\Z/(2)$, yields a new complex with homology groups that we denote by 
$H_*(X; \bar V^{(2)})$ and $H_*(X, T; \bar V^{(2)})$. 

The inner product $\{\, , \}: V \times  \bar V \to \Z/(2)$  given by usual dot product is non-degenerate, and satisfies
$$\{r v, w\} = r \{v, w\} = \{v, rw\}$$
$$\{v \cdot g, w\} = \{ v, w g^{-1}\},$$
for all $r \in \Z/(2)$ and $g \in \pi$. (Readers of \cite{kl} should note the missing inverse sign in Equation (2.5).) As in Section 5.1 of \cite{kl}, we have $H_q(X; V^{(2)})\cong H_{3-q}(X, T; \bar V^{(2)})$ for all $q$. 

Taking inverse transpose of matrices $X, Y$ modulo 2 merely shifts the off-diagonal blocks to the opposite side. The ranks of the resulting homology groups are unaffected. We have

\begin{prop}\label{ranks} ${\rm rk}\,\, H_i(X; V^{(2)}) = 
{\rm rk}\,\,H_{3-i}(X, T; V^{(2)})$, for each $i$. \end{prop}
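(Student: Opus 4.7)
The plan is to combine the twisted Lefschetz duality just cited from \cite{kl} with a direct change-of-basis argument that identifies the two coefficient systems modulo~$2$. Over the field $\Z/(2)$, Lefschetz duality gives a natural isomorphism
$$H_i(X; V^{(2)}) \cong H_{3-i}(X, T; \bar V^{(2)}),$$
so it suffices to prove that the $\Z/(2)$-dimensions of $H_{3-i}(X, T; \bar V^{(2)})$ and $H_{3-i}(X, T; V^{(2)})$ agree. I would establish this by exhibiting an $\Z/(2)[\pi]$-module isomorphism $V^{(2)} \to \bar V^{(2)}$.

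The key observation is that every meridian generator of $\pi$ is sent by $\g_\phi$ to a block matrix of the form $I + N$ with $N^2 = 0$. Consequently, modulo $2$ inversion is trivial, and the bar action reduces to the transposed action:
$$\g_\phi(x) = \begin{pmatrix} I & I \\ 0 & I\end{pmatrix},\ \bar\g_\phi(x) = \begin{pmatrix} I & 0 \\ I & I\end{pmatrix};\qquad \g_\phi(y) = \begin{pmatrix} I & 0 \\ C & I\end{pmatrix},\ \bar\g_\phi(y) = \begin{pmatrix} I & C^\tau \\ 0 & I\end{pmatrix}.$$
Because $C$ is the companion matrix of $\phi$, it is cyclic with minimal polynomial equal to its characteristic polynomial, and the same holds for $C^\tau$; hence $C$ and $C^\tau$ are similar over $\Z/(2)$, so there exists $P \in \mathrm{GL}_m(\Z/(2))$ with $PC = C^\tau P$. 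Set $J = \begin{pmatrix} 0 & P \\ P & 0 \end{pmatrix}$. A direct block calculation then verifies $J\,\g_\phi(x)\,J^{-1} = \bar\g_\phi(x)$ and $J\,\g_\phi(y)\,J^{-1} = \bar\g_\phi(y)$ in $\mathrm{GL}_{2m}(\Z/(2))$. Since $x$ and $y$ generate $\pi$, conjugation by $J$ gives the required isomorphism of right $\Z/(2)[\pi]$-modules, hence an isomorphism of the twisted chain complexes and the claimed equality of relative homology dimensions. Composing with Lefschetz duality yields the proposition.

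The main obstacle is the middle step: assembling the correct conjugating matrix $J$. It rests on two elementary but essential facts --- the parabolic form of $\g_\phi$, which forces the image matrices to have nilpotent part of square zero and thus collapses the bar involution to transposition modulo $2$, and the similarity of the companion matrix $C$ to its transpose over $\Z/(2)$. Note that the argument is genuinely mod-$2$: over $\Z$ the matrix $(I+N)^{-1} = I - N + N^2 - \cdots$ differs from $I + N$, and the bar representation is not in general conjugate to $\g_\phi$, so the duality of ranks expressed by Proposition \ref{ranks} is a special feature of the characteristic-$2$ setting.
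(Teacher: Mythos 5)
Your proof is correct, and it follows the same outline as the paper's: apply the twisted Lefschetz duality $H_i(X;V^{(2)})\cong H_{3-i}(X,T;\bar V^{(2)})$ and then argue that $V^{(2)}$ and $\bar V^{(2)}$ give the same Betti numbers. Where the paper asserts only that ``taking inverse transpose of $X,Y$ modulo $2$ merely shifts the off-diagonal blocks to the opposite side'' and that ``the ranks of the resulting homology groups are unaffected,'' you supply the justification they leave implicit: the two mod-$2$ representations are actually conjugate, via the explicit matrix $J=\bigl(\begin{smallmatrix}0&P\\P&0\end{smallmatrix}\bigr)$ with $PCP^{-1}=C^\tau$. (Your observation that meridian images are of the form $I+N$ with $N^2=0$, so that inversion is the identity mod $2$ and the bar action reduces to transposition on the generators, is the correct reading of the paper's phrase about shifting blocks.) Your conjugation computation checks out, and since $x,y$ generate $\pi$ this gives an isomorphism of $\Z/(2)[\pi]$-modules, hence isomorphic relative chain complexes and equal Betti numbers. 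One small simplification available to you: the existence of $P$ does not really need the cyclicity of the companion matrix — every square matrix over a field is similar to its transpose — though your companion-matrix argument is of course also valid. In short, this is the paper's proof with the missing detail made explicit.
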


The following lemma will enable us to determine several boundary maps in appropriate chain complexes. 
Recall that the group $\pi$ of $k$ has a presentation of the form
$\<x, y \mid r\>$, where $r$ has the form $Wx =yW$. We denote the class of an oriented longitude by $l$. 

\begin{lemma} \label{partials} 
\begin{enumerate}
\item[\rm(1)] $\g_\phi(l)$ is a block matrix of $m \times m$ integer matrices  
$$L=\begin{pmatrix} - I & E \\  0 & - I\end{pmatrix},$$
where $I, 0$ are the identity and zero matrices, respectively, and $E$ has only even entries. 
\item[\rm(2)] The matrix of Fox partial derivative matrix $(\partial r/\partial x)^{\g_\phi}$ has the form 
$$\begin{pmatrix} 0 & *\\  0 & *\end{pmatrix},$$
where all blocks are $m \times m$. Similarly, 
$(\partial r/\partial y)^{\g_\phi}$ has (up to sign) the form 
$$\begin{pmatrix} I & 0\\  * & 0\end{pmatrix}.$$
\end{enumerate}
\end{lemma}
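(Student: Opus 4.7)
The plan is to handle (1) and (2) separately. For (1), I would proceed in three stages: constrain the form of $\g(l)$ by commutativity, show evenness via a mod-$2$ reduction through a dihedral quotient, then pin down the sign $-1$ by an explicit longitude computation. For (2), I would use the Fox fundamental identity together with a short direct derivative calculation.

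Since $l$ commutes with $x$ and lies in the commutator subgroup, $\g(l)$ commutes with $\g(x)=\left(\begin{smallmatrix}1&1\\0&1\end{smallmatrix}\right)$ and has determinant $1$.  The centralizer of a nontrivial unipotent in $\mathrm{SL}_2$ forces $\g(l)=\left(\begin{smallmatrix}a&e(w)\\0&a\end{smallmatrix}\right)$ with $a=\pm 1$ and $e(w)\in\Z[w]$, and substituting $C$ for $w$ gives $\g_\phi(l)=\left(\begin{smallmatrix}aI&e(C)\\0&aI\end{smallmatrix}\right)$.  For the evenness of $e(C)$: both $\g(x)^2$ and $\g(y)^2$ are congruent to $I$ modulo $2$, so the mod-$2$ reduction $\bar\g$ factors through $\pi/\langle\langle x^2,y^2\rangle\rangle$, which for a $2$-bridge knot $k(\a,\b)$ with $\a$ odd is the dihedral group $D_\a$.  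In $D_\a$ any element commuting with a reflection and lying in the commutator subgroup $\langle r\rangle\cong\Z/\a$ must be trivial, since $r^j s = s r^j$ forces $r^{2j}=e$, hence $j=0$ (using $\a$ odd).  Therefore $\bar\g(l)=I$, giving $e(w)\equiv 0\pmod 2$ and $a\equiv 1\pmod 2$ (consistent with $a=\pm 1$).  To pin down $a=-1$, I would invoke a standard longitude formula $l = W^*W\cdot x^{-2\lambda}$, where $W^*$ denotes the reverse of $W$ with the roles of $x$ and $y$ interchanged; using the vanishing of the $(1,1)$-entry of $\g(W)$ (the defining property of the Riley polynomial) together with $\det\g(W)=1$, an explicit matrix multiplication yields $\g(W^*W)=\left(\begin{smallmatrix}-1&*\\0&-1\end{smallmatrix}\right)$, and the diagonal sign is preserved by the even power $\g(x)^{-2\lambda}$.

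For (2), the identity $\g(r)=I$ combined with the Fox fundamental formula yields
$$\g_\phi(\partial r/\partial x)(X-I)+\g_\phi(\partial r/\partial y)(Y-I)=0,$$
where $X-I=\left(\begin{smallmatrix}0&I\\0&0\end{smallmatrix}\right)$ and $Y-I=\left(\begin{smallmatrix}0&0\\C&0\end{smallmatrix}\right)$.  Writing both Fox derivatives in block form and comparing the four block entries in the resulting matrix equation, one finds that the first block-column of $\g_\phi(\partial r/\partial x)$ and the second block-column of $\g_\phi(\partial r/\partial y)$ must vanish; invertibility of $C$ (since $w$ is a unit for $2$-bridge knots, Remark~\ref{companion}(1)) is essential here.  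To identify the $(1,1)$-block of $\g_\phi(\partial r/\partial y)$ as $\pm I$, I would compute directly from $r=WxW^{-1}y^{-1}$: the Fox formula gives $\partial r/\partial y=(1-WxW^{-1})\partial W/\partial y-WxW^{-1}y^{-1}$.  Under $\g_\phi$ the relation $WxW^{-1}=y$ in $\pi$ turns this into $(I-Y)\g_\phi(\partial W/\partial y)-I$; since the first block-row of $I-Y$ is zero, the $(1,1)$-block equals $-I$, giving the claimed form up to sign.

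The main obstacle is step three of (1)---establishing the precise sign $a=-1$.  The mod-$2$ argument only yields $a\equiv 1\pmod 2$, which is compatible with both $\pm 1$, so the sign must be extracted from the specific combinatorial structure of $2$-bridge presentations, either through an explicit longitude formula as sketched above or through a more topological argument concerning the lift of the peripheral torus to an appropriate cover.
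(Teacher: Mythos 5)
Your proof of part (2) is essentially identical to the paper's: apply the Fox fundamental formula to $r$, use the block forms of $X-I$ and $Y-I$ together with the invertibility of $C$ to kill the appropriate block columns, then differentiate $r$ directly (using $WxW^{-1}=y$ in $\pi$) to pin the $(1,1)$ block of $(\partial r/\partial y)^{\g_\phi}$ down to $-I$. That part is fine.

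For part (1), the paper does not argue at all: it simply cites Theorem~2 of Riley~\cite{riley1}, which gives the form of $\g(l)$ (including the sign) for all parabolic representations of 2-bridge knot groups. Your reconstruction is a genuinely different route, and the first two stages of it are correct and informative. The centralizer observation ($\g(l)$ commutes with a nontrivial unipotent, hence has the form $\left(\begin{smallmatrix}a&e(w)\\0&a\end{smallmatrix}\right)$ with $a=\pm1$) is clean. The mod-$2$ dihedral argument for evenness is also sound: $\g(x)^2\equiv\g(y)^2\equiv I\pmod 2$ forces $\bar\g$ to factor through $\pi/\langle\langle x^2\rangle\rangle\cong D_\a$ (with $\a$ odd for a knot), and since $l$ is peripheral and lies in $\pi'$, it maps into the cyclic rotation subgroup while also centralizing a reflection, hence to the identity; expressing $e(w)$ in the basis $1,w,\dots,w^{m-1}$ then yields even coefficients and hence even entries in $E=e(C)$.

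The gap is exactly where you flag it: the sign $a=-1$. Your argument establishes only $a=\pm1$ (from the centralizer) and $a\equiv 1\pmod 2$ (from the dihedral quotient), which does not distinguish the two cases. The longitude formula $l=W^{*}W\,x^{-2\lambda}$ and the matrix computation you sketch are plausible but not verified, and this is the crux of the statement -- the subsequent computation in the proof of Theorem~\ref{eval} (where $L-I=\left(\begin{smallmatrix}-2I&E\\0&-2I\end{smallmatrix}\right)$ is used to determine $H_*(T;V)$) fails if $a=+1$. As submitted, part (1) is not proved. The cleanest fix is the one the paper uses: cite Riley's Theorem~2, which computes $\g(l)$ explicitly and shows the trace is $-2$ for every nonabelian parabolic representation of a 2-bridge knot group. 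If you wish to avoid that citation, you must fully execute the longitude computation, verifying both the formula for $l$ in terms of $W$ and the claimed $(1,1)$ entry of $\g(W^{*}W)$; this requires more care than the sketch provides, since the relation between $\g(W^{*})$ and $\g(W)$ is not an obvious transpose or reversal identity.
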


\begin{proof} Statement (1) follows immediately from Theorem 2 of \cite{riley1}. 

We prove statement (2). Fox's fundamental formula (2.3) of \cite{fox} implies that 
$$(r-1)^{\g_\phi} = (\partial r/ \partial x)^{\g_\phi}(X-I) +  (\partial r/ \partial y)^{\g_\phi}(Y-I).$$
The left hand side is trivial, since $r =1$ in $\pi$. Using the forms for $X,Y$, we have
$$\begin{pmatrix} 0 & 0\\  0 & 0\end{pmatrix} = (\partial r/ \partial x)^{\g_\phi}\begin{pmatrix} 0 & 1\\  0 & 0\end{pmatrix} +  (\partial r/ \partial y)^{\g_\phi}\begin{pmatrix} 0 & 0\\  C & 0\end{pmatrix}.$$
It follows immediately that $(\partial r/\partial x)^{\g_\phi}$ and $(\partial r/\partial y)^{\g_\phi}$ have the forms $\begin{pmatrix} 0 & *\\  0 & *\end{pmatrix}$ and $\begin{pmatrix} * & 0\\  * & 0\end{pmatrix},$
respectively. 

In order to see that the upper left-hand block of $(\partial r/\partial y)^{\g_\phi}$ is the identity matrix, we use form $Wx= yW$ of the relation $r$. Fox calculus yields
$$(\partial r/\partial y)^{\g_\phi} = (\partial W/\partial y)^{\g_\phi}-I - Y(\partial W/\partial y)^{\g_\phi}.$$
The right-hand side can be written as 
$$\begin{pmatrix} 0 & 0\\  -C & 0\end{pmatrix}(\partial W/\partial y)^{\g_\phi}-I,$$
and from this the desired form follows immediately.
\end{proof}

\begin{remark} Similar arguments show that, in fact, the upper right-hand block of $(\partial r/\partial x)^{\g_\phi}$ is unimodular. We will not require this fact here. \end{remark} 

We compute $H_i(T; V)$, for $i=0,1$. Beginning with the presentation $\pi_1T = \<x, l \mid xl=lx\>$, we construct a canonical $2$-complex with a single 0-cell *, $1$-cells $x$, $l$ corresponding to generators and a single $2$-cell $s$ corresponding to the relation. Consider the associated $\g_\phi$-twisted chain complex
$$0 \to C_2(T; V)\cong V\ {\buildrel \partial_2 \over \longrightarrow}\
C_1(T; V)\cong V \oplus V \ {\buildrel \partial_1 \over \longrightarrow}\
C_0(T; V)\cong V\to 0,$$
with 2-chain generators $v \otimes s$; 
$1$-chain generators $v\otimes x$ and $v \otimes l$; 
and $0$-chain generators $v\otimes *$. Here $v$ ranges freely over a basis $e_1, \ldots, e_{2m}$ for $V=\Z^{2m}$. 

We have $\partial_1(v \otimes x) = v\otimes \partial x = v\otimes (x * -*) = v(X-I)\otimes *$. Writing $v = (v_1, v_2) \in \Z^m \oplus Z^m$, we can express the $0$-chain boundaries obtained this way as $(0,v_1)\otimes *$, where $v_1$ is arbitrary. Similarly, $\partial_1(w\otimes l) = v(L-I)\otimes * =
(-2 w_1, w_1 E-2 w_2)\otimes *$, where $w=(w_1, w_2)\in V\oplus V$. 
It follows that $H_0(T; V) \cong (\Z/2)^m$, with generators
represented by the $0$-chains $e_i\otimes *$, where $1 \le i \le m$.

The computation of $H_1(X; V)$ is similar. From the previous description of $\partial_1(v\otimes x)$ and $\partial_1(w \otimes l)$, 
it is easy to see that $1$-cycles have the form \begin{equation}\label{1}(2 w_2, v_2)\otimes x + (0, w_2)\otimes l. \end{equation} 
Applying Fox calculus to the relation $xl=lx$, we find that $\partial_2(v \otimes s) =
v \otimes \partial_2 s = v\otimes (1-l)x + v\otimes (x-1)l =
v(I-L)\otimes x + v(X-I)\otimes l.$
By Lemma \ref{partials}, the latter expression reduces to 
\begin{equation}\label{2}(2v_1, -v_1 E+2 v_2)\otimes x + (0, v_1)\otimes l.  \end{equation} Comparing the expressions (\ref{1}) and (\ref{2}),
we see that $H_1(T; V) \cong (\Z/2)^m$, with 
generators represented by $1$-chains $e_i\otimes x$, where $m< i \le 2m$.

Consider part of the long exact sequence of $\g_\phi$-twisted homology groups associated to the pair $(X, T)$:
$$H_1(T; V) \to H_1(X; V) \to H_1(X, T; V) \to H_0(T; V) \to H_0(X; V)$$
Recall that $H_0(X; V)$ vanishes. 

We compute $H_1(X, T; V)$. Consider the following presentation of $\pi_1 X$:
$$\<x, y, l \mid q, r, s\>,$$
where $r$ and $s$ are given above, and $q$ is the relation $l=u$ expressing the longitude $l$ as  a word $u$ in  $x^{\pm 1}$ and $y^{\pm 1}$. An explicit word $u$ is not difficult to write (see \cite{riley1}), but we will not require this. The $\g_\phi$-twisted chain complex associated to this presentation contains the $\g_\phi$-twisted chain complex for $T$ as a subcomplex. For the purpose of computing homology below dimension 2, we may substitute this complex for $C_*(X; V)$.  

The relative complex $C_*(X, T; V)$ has the form
$$0 \to C_2(X, T; V)\cong V \oplus V\ {\buildrel \partial_2 \over \longrightarrow}\
C_1(X, T; V)\cong V \ \to\ 
0,$$
with relative 2-chain generators represented by $v \otimes r$ and $v \otimes s$; relative $1$-chain generators represented by $v \otimes y$. As before $v$ ranges freely over a basis for $\Z^{2m}$. 

We have 
$\partial_2(v \otimes r) = v \otimes (\partial r/\partial y) y = v(\partial r/\partial y) ^{\g_\phi} \otimes y$. By Lemma \ref{partials}, $\partial_2(v \otimes r)$ has the form $(v_1, 0) \otimes y$, where $v_1 \in \Z^m$ is arbitrary. Hence the rank of $H_1(X, T; V_{\g_\phi})$  cannot exceed $m$.
 
Similarly, $\partial_2(v \otimes s) = v \otimes (\partial s/\partial y) y = v(\partial s/\partial y) ^{\g_\phi} \otimes y$. Recall that the relation $s$ has the form 
$l = u$. $\partial_2(v \otimes s)= v(\partial u/\partial y) ^{\g_\phi} \otimes y$.  Hence by Fox's fundamental formula, 
$$(u-1)^{\g_\phi} = (\partial u/ \partial x)^{\g_\phi}(X-I) +  (\partial u/ \partial y)^{\g_\phi}(Y-I).$$
The left-hand side is equal to $(l-1)^{\g_\phi} = L-I$, since $u=l$ in $\pi$; by Lemma \ref{partials}, this matrix has the form
$$ \begin{pmatrix} -2 I & E\\  0 & -2 I\end{pmatrix}.$$
Writing 
$$(\partial u/\partial x)^{\g_\phi} = \begin{pmatrix} U_x^{11} & U_x^{12} \\ U_x^{21}  & U_x^{22} \end{pmatrix},\quad (\partial u/\partial y)^{\g_\phi} = \begin{pmatrix} U_y^{11} & U_y^{12} \\ U_y^{21}  & U_y^{22} \end{pmatrix}, $$
and using the forms for $X,Y$, the right-hand side has the form
$$\begin{pmatrix} 0 & U_x^{11}\\  0 & U_x^{21}\end{pmatrix}  +  \begin{pmatrix} U_y^{12}C & 0\\  U_y^{22}C & 0\end{pmatrix}.$$
Since $C$ is unimodular (see Remark \ref{companion}), we see that 
that $(\partial u/\partial y)^{\g_\phi}$ has the form
$$\begin{pmatrix} * &-2 C^{-1}\\  * & 0\end{pmatrix}.$$
Hence
$\partial_2(v \otimes s)$ has the form $(*, w_2)\otimes y$ , where $w_2$ ranges over $(2\Z)^m$ as $v$ varies over $\Z^{2m}$. It follows that $H_1(X, T; V)\cong (\Z/2)^m$, with generators represented by $e_i \otimes y,$ where $m< i\le 2m.$ 

We see also that the map $H_1(X, T; V) \to H_0(T; V)$ in the long exact sequence of the pair $(X, T)$ is an isomorphism. From this it follows that the map $H_1(T; V) \to H_1(X; V)$ is surjective. Since $H_1(T; V)$ is 2-torsion, so is $H_1(X; V)$. 

We can now prove Theorem \ref{eval}. 

\begin{proof} 
By Lefschetz duality, 
$${\rm rk}\, H_2(X; V^{(2)})={\rm rk}\, H_1(X, T; V^{(2)})= m.$$
The universal coefficient theorem for homology implies that $H_0(X; V^{(2)})\cong H_0(X; V)\otimes \Z/(2)$. Since $H_0(X; V)$ is  trivial,  ${\rm rk}\,H_0(X; V^{(2)}) =0$. Also, ${\rm rk}\, H_3(X; V^{(2)})= {\rm rk}\, H_0(X, T; V^{(2)}) =0$. 

The alternating sum 
${\rm rk}\, H_0(X; V^{(2)})- {\rm rk}\, H_1(X; V^{(2)})+ {\rm rk}\, H_2(X; V^{(2)})- {\rm rk}\, H_3(X; V^{(2)})$ coincides with the Euler characteristic of $X$, which is 0. Hence ${\rm rk}\, H_1(X; V^{(2)})=m$.   By the universal coefficient theorem for homology, $H_1(X; V^{(2})  \cong H_1(X; V)\otimes \Z/2$. Since $H_1(X; V)$ is $2$-torsion, it is isomorphic to $(\Z/(2))^m$. Therefore
$|\D_{k, \g_\phi}(1)|= |H_1(X; V)|= 2^m$. \end{proof} 

In the beginning of Section \ref{examples} we saw that for a $2$-bridge knot $k$ with exterior $X$, the group $H_0(X'; V)$ vanishes for every 
total representation $\g_\phi$ associated to a nonabelian parabolic representation of the group of $k$. The result depends on the fact that the roots $w$ of any Riley polynomial are algebraic units. 

For $2$-bridge links, the algebraic integer $w$ need not be a unit (see Example \ref{whiteheadex}). Nevertheless, we have the following, which is useful for computation in view of Proposition \ref{alexcompute}. 

\begin{prop}\label {finite} Assume that $\ell$ is  a $2$-bridge link with exterior $X$. Let $\g_\phi$ be a total representation associated to a nonabelian parabolic ${\rm SL}_2 \C$ representation of the group of $\ell$. Then 
$H_0(X'; V)\cong \Z/(\phi(0)) \oplus \Z/(\phi(0))$. In particular, the group is finite. \end{prop}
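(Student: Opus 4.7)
The plan is a direct computation of $H_0(X'; V)$ from the chain complex, using the standard $2$-generator presentation $\pi = \langle x, y \mid Wy = yW\rangle$ of the link group, with $x, y$ meridians of the two components. Since $H_0 = \coker \partial_1$ involves only the generators of $\pi$, we have $C_0(X'; V) \cong \Z[t_1^{\pm 1}, t_2^{\pm 1}] \otimes_\Z V$, free of rank $2m$ over $\Z[\Z^2]$ (where $m = \deg \phi$), and $\im \partial_1$ is the submodule generated by the rows of the matrices $t_1 X - I$ and $t_2 Y - I$.

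Write $V = V_1 \oplus V_2$ with $V_1, V_2 \cong \Z^m$ matching the block forms
$$X = \begin{pmatrix} I & I \\ 0 & I \end{pmatrix}, \qquad Y = \begin{pmatrix} I & 0 \\ C & I \end{pmatrix},$$
and label the corresponding $\Z[\Z^2]$-generators of $C_0(X'; V)$ as $e_1, \ldots, e_m$ (from $V_1$) and $f_1, \ldots, f_m$ (from $V_2$). Reading off the four blocks of rows yields
\begin{align*}
(t_1 - 1) e_i + t_1 f_i &= 0, & (t_1 - 1) f_i &= 0, \\
(t_2 - 1) e_i &= 0, & t_2 (C \vec e)_i + (t_2 - 1) f_i &= 0,
\end{align*}
for $i = 1, \ldots, m$, with $\vec e = (e_1, \ldots, e_m)^T$ and $(C \vec e)_i = \sum_j C_{ij} e_j$. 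The first pair gives $f_i = (1 - t_1) e_i$ together with $(t_1 - 1)^2 e_i = 0$; substituting into the fourth and applying the third then collapses it to $C \vec e = 0$.

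After eliminating the $f_i$, the module is generated by $e_1, \ldots, e_m$ over $\Z[t_1^{\pm 1}]$ (with $t_2$ trivial) subject to $(t_1 - 1)^2 e_i = 0$ and $C \vec e = 0$. Because $C$ is the companion matrix of $\phi$, the first $m - 1$ rows of $C$ kill $e_2, \ldots, e_m$, and the final row then reduces to $\phi(0) e_1 = 0$. Thus the module collapses to the cyclic quotient $\Z[t_1^{\pm 1}] / (\phi(0),\, (t_1 - 1)^2)$. Setting $s = t_1 - 1$, one has $(1+s)^{-1} \equiv 1 - s \pmod{s^2}$, so this ring is $(\Z/\phi(0))[s] / (s^2)$, free of rank $2$ over $\Z/\phi(0)$ with basis $\{1, s\}$, i.e.\ $\Z/\phi(0) \oplus \Z/\phi(0)$.

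The only real obstacle is keeping the row-versus-column bookkeeping straight when passing from the right $\pi$-action on $V$ to a presentation of $\coker \partial_1$; once set up, the remaining steps are routine linear algebra with block-triangular matrices.
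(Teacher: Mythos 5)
Your proof is correct, and it takes a genuinely different route from the paper's. You perform a direct row reduction on the $4m\times 2m$ presentation matrix built from the rows of $t_1X - I$ and $t_2Y - I$, eliminating the $f_i$'s and isolating the single generator $e_1$ subject to $(t_1-1)^2e_1 = 0$ and $\phi(0)e_1 = 0$, then identify $\Z[t_1^{\pm1}]/((t_1-1)^2,\phi(0))$ with $(\Z/\phi(0))[s]/(s^2)$. The paper instead exploits the fact that $H_0$ is the cokernel of $\partial_1$ and that $t_1,t_2$ act invertibly: it defines a map sending $1\otimes v\otimes *$ to $v\bmod V(XY-YX)$ and checks well-definedness using $t_1t_2 = t_2t_1$, reducing the whole computation to the single matrix $XY - YX = \begin{pmatrix}C & 0\\ 0 & -C\end{pmatrix}$, whose cokernel is visibly $\Z/\phi(0)\oplus\Z/\phi(0)$. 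The paper's argument is shorter and makes the origin of the two $\Z/\phi(0)$ factors transparent (one from each diagonal block of the commutator), whereas your argument produces them as the two ``jet'' coefficients $\{1,s\}$ in the quotient ring. Your approach has the minor cost of depending on a specific normal form of the companion matrix (you implicitly use the convention where the first $m-1$ rows are $(0,1,0,\dots)$, etc.; another standard convention would instead single out $e_m$), and of requiring a small bookkeeping aside to justify $f_i = (1-t_1)e_i$ --- strictly one has $f_i = t_1^{-1}(1-t_1)e_i$, which agrees with $(1-t_1)e_i$ only after $(t_1-1)^2e_i = 0$ has been established. Neither issue is a gap, but both are worth a sentence in a final write-up.
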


\begin{proof} We use notation similar to that above. Beginning with the presentation $ \<x, y \mid Wx=xW\>$ for the link group, we construct a canonical $2$-complex with a single 0-cell *, $1$-cells $x, y$ corresponding to generators, and a single $2$-cell corresponding to the relation. We consider the associated $\g_\phi$-twisted chain complex
$$C_*(X'; V) = (\Z[\Z^2]\otimes V)\otimes_{\Z[\pi]}C_*(\tilde X),$$
with $1$-chain generators $1 \otimes v\otimes x$ and $1 \otimes v \otimes y$; 
and $0$-chain generators $1 \otimes v\otimes *$. As before, $v$ ranges freely over a basis $e_1, \ldots, e_{2m}$ for $V=\Z^{2m}$, where $m$ is the degree of $\phi$. We will not need to consider $2$-chains.

The $\g_\phi$-twisted homology group $H_0(X'; V)$ is the quotient of $\Z[\Z^2]\otimes V$,  the $0$-chains, modulo the image of the boundary operator $\partial_1$. Since $\partial_1(1\otimes v\otimes x) = 
t_1\otimes vX \otimes * - 1\otimes v \otimes *$ and 
$\partial_1(1\otimes v\otimes y) = 
t_2\otimes vY \otimes * - 1\otimes v \otimes *$, it follows immediately that $t_1^n \otimes v \otimes * = 1 \otimes v X^{-n}\otimes *$ and 
$t_2^n \otimes v \otimes * = 1 \otimes v Y^{-n} \otimes *$ for all $n \in \Z$. We can define an abelian group homomorphism 
$$f: \Z[\Z^2]\otimes V/(\im \partial_1) \to V/V(XY-YX)$$
by sending $p(t_1, t_2) \otimes v \otimes * \mapsto v p(\bar X, \bar Y)$, for any polynomial $p \in \Z[\Z^2]$, and extending linearly. The assignment $v \mapsto 1 \otimes v \otimes *$, for $v \in V$, 
induces an inverse homomorphism that is well defined, since
$$1 \otimes vXY \otimes * = t_2^{-1}t_1^{-1}\otimes v \otimes *=t_1^{-1}t_2^{-1}\otimes v \otimes *= 1 \otimes vYX\otimes * $$
for all $v\in V$. Hence $f$ is an isomorphism. 

The matrix
$XY-YX$ has the form
$$\begin{pmatrix} C& 0\\ 0 & -C\end{pmatrix},$$
where $C$ is the $m \times m$-companion matrix of $\phi$ and $0$
is the zero matrix of that size. The determinant of $XY-YX$ is 
$({\rm det}\ C)^2$, which is the square of the constant coefficient of $\phi$. It follows that $H_0(X'; V) \cong \Z/({\phi(0)}) \oplus \Z/({\phi(0)})$.

\end{proof} 

\section{Twisted Alexander polynomials of torus knots}

The group $\pi$ of any fibered knot has a presentation of the form
$$\<x,  a_1, \ldots, a_{2g} \mid x a_i x^{-1} = \mu (a_i)\ (1 \le i \le 2g)\>,$$ where $a_1, \ldots, a_{2g}$ are free generators of the fundamental group of the fiber $S$, and $\mu: \pi_1 S \to \pi_1 S$ is the automorphism induced by the monodromy. 

Let $\g: \pi \to {\rm GL}_N R$ be a representation, where $R$ is a Noetherian UFD. The twisted homology computations in Section \ref{twistedhomology} can be performed using the above presentation. We find that the based $\g$-twisted Alexander module $\A_\g^0$ is a free $R$-module of rank $2g N$, with generators $1 \otimes v \otimes a_i$, where $v$ ranges over a basis for $R^N$. Multiplication by $t$ induces an automorphism described by an $N\times N$-block matrix $M$ with $2g \times 2g$ blocks  
$$ X^{-1} \Biggl({ {\partial \mu(a_i)}\over{\partial a_j} }^\g\Biggr).$$
As before, $X$ is the image of the meridian $x$, and the notation ${}^\g$ indicates that the terms of the Fox partial derivatives are to be evaluated by $\g$.  The characteristic polynomial is the $\g$-twisted Alexander polynomial of $k$. 

In \cite{riley1}, Riley describes nonabelian parabolic representations of any torus knot $k$. The $(2n+1,2)$ torus knots are 2-bridge knots with $\a = 2n+1, \b=1$. Their Riley polynomials $\Phi_{2n+1, 1}(w)$ can be found recursively:
$\Phi_{1,1}(w)=1, \Phi_{3,1}(w)=w+1$, and 
$$ \Phi_{2n+5, 1}(w) = -\Phi_{2n+1, 1}(w) + (w+2) \Phi_{2n+3,1}(w), \quad n \ge 0.$$
For arbitrary torus knots, the situation is more complicated. Nevertheless, Riley shows that any parabolic representation for a torus knot group is conjugate to a representation with image in 
${\rm SL}_2 {\mathbb A}$, where $\mathbb A$ is the ring of algebraic integers. Consequently, any parabolic representation $\g$ determines a total representation $ \g_\phi$ and hence a total twisted Alexander polynomial $\Delta_{k, \g_\phi}(t)$, as above. 

\begin{theorem} \label{torusknot} For any parabolic representation $\g$ of the group of a torus knot $k$, the total twisted Alexander polynomial $\D_{k, \g_\phi}(t)$ is a product of cyclotomic polynomials. 

\end{theorem}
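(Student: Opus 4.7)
The plan is to exploit the fact that torus knots are fibered with periodic monodromy and that their fundamental groups have nontrivial center. Write $\pi = \<a, b \mid a^p = b^q\>$ for $T(p,q)$, so that the center $Z(\pi) = \<c\>$ is infinite cyclic with $c = a^p = b^q$. The total representation $\g_\phi$ is defined from a nonabelian parabolic $\g : \pi \to {\rm SL}_2 \C$ (via Riley's standard form), so in particular $\g$ has nonabelian image. The element $\g(c)$ commutes with the nonabelian subgroup $\g(\pi) \subset {\rm SL}_2 \C$; since the centralizer in ${\rm SL}_2 \C$ of any nonabelian subgroup is $\{\pm I\}$, we conclude $\g(c) = \pm I$. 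The block-companion-matrix construction defining $\g_\phi$ preserves $\pm I$, so $\g_\phi(c) = \pm I$ as well.

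Because the complement of $T(p,q)$ is Seifert fibered over the orbifold $D^2(p,q)$, the monodromy of the fibration $X \to S^1$ is periodic of order $pq$. I would take a finite-order diffeomorphism representative $\phi : S \to S$ of the monodromy class, so that $\phi_* : \pi_1 S \to \pi_1 S$ satisfies $\phi_*^{pq} = \mathrm{id}$. This yields a presentation
$$\pi = \<x_1, a_1, \ldots, a_{2g} \mid x_1 a_i x_1^{-1} = \phi_*(a_i)\>,$$
in which the stable letter $x_1$ has abelianization $1$, and $x_1^{pq}$ commutes with $\pi_1 S$. Hence $x_1^{pq} \in Z(\pi) = \<c\>$; comparing abelianizations (both equal $pq$) forces $x_1^{pq} = c$. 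In this presentation, the setup of Section 7 identifies $\D_{k, \g_\phi}(t)$ with the characteristic polynomial of the $2gN \times 2gN$ matrix $M$ whose $(i,j)$-block is $X_1^{-1}(\partial \phi_*(a_i)/\partial a_j)^{\g_\phi}$, where $X_1 = \g_\phi(x_1)$.

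A straightforward induction using the Fox chain rule and the equivariance $\g_\phi \circ \phi_*(h) = X_1 \g_\phi(h) X_1^{-1}$ yields $M^n = \mathrm{diag}(X_1^{-n}) \cdot J^{(n)}$, where $J^{(n)}$ is the block matrix of $\g_\phi$-evaluated Fox Jacobians of $\phi_*^n$. For $n = pq$, $\phi_*^{pq} = \mathrm{id}$ makes $J^{(pq)}$ the block identity, and $X_1^{pq} = \g_\phi(c) = \pm I$, so $M^{pq} = \pm I$ and $M^{2pq} = I$. Every eigenvalue of $M$ is thus a $2pq$-th root of unity, and since $\det(tI - M)$ is monic with integer coefficients, $\D_{k, \g_\phi}(t)$ is a product of cyclotomic polynomials (up to the usual $\pm t^k$ unit). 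The chief obstacle is the switch from the geometric meridian-based presentation to one whose monodromy automorphism is literally of finite order on $\pi_1 S$: the geometric monodromy, given by conjugation by a meridian of $k$, is generally not periodic on $\pi_1 S$, so one must replace it by an inner-automorphism-related $\phi_*$ and confirm that the intrinsic module $\A_{\g_\phi}^0$ (which depends only on $(\pi, \g_\phi)$) is computed correctly from the new presentation.
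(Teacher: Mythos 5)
Your key observation that $\g_\phi(c)=\pm I$ for the central element $c$ of a torus knot group is correct (the centralizer in ${\rm SL}_2\C$ of a nonabelian subgroup is $\{\pm I\}$, and this passes to the block construction), and the Fox chain rule computation $M^n = \mathrm{diag}(X_1^{-n})\cdot J^{(n)}$ is also right. But the presentation you need does not exist, and this is a fatal gap, not just a technicality you can defer. You require a stable letter $x_1$ with $\e(x_1)=1$ and $x_1^{pq}=c$ (equivalently, conjugation by $x_1$ restricted to $\pi'=\pi_1 S$ has order exactly $pq$). Pass to the quotient $\pi/\<c\>\cong \Z/p * \Z/q$. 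If $x_1^{pq}=c$ then $\bar x_1$ has finite order in $\Z/p*\Z/q$, hence by the Kurosh subgroup theorem is conjugate into $\Z/p$ or $\Z/q$, so its image in the abelianization $\Z/p\times\Z/q\cong\Z/pq$ lies in $\Z/p\times 0\,\cup\,0\times\Z/q$, i.e.\ is a multiple of $p$ or of $q$. But $\e(x_1)=1$ forces $\bar x_1$ to map to a unit of $\Z/pq$, which is neither (for $p,q\ge 2$). So no such $x_1$ exists; the geometric monodromy of a torus knot cannot be normalized to a finite-order automorphism of $\pi_1 S$ by an inner twist, for precisely the reason you flagged as the ``chief obstacle.''

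The paper's proof sidesteps this by never asking $\mu$ to have finite order in ${\rm Aut}(\pi_1 S)$. It works directly with the actual monodromy, using $\mu^{pq}=\mathrm{conj}_l$ where $l$ is the longitude, so that the Fox chain rule for $M^{pqr}$ produces terms involving $L=\g_\phi(l)$. The crucial input is that $L$ has the unipotent-up-to-sign form $\begin{pmatrix}-I & E\\ 0 & -I\end{pmatrix}$ (Lemma \ref{partials}(1), from Riley's Theorem 2; alternatively, this follows from your own observation: $l=c^{-1}x^{pq}$ gives $L=\pm X^{pq}$, unipotent since $\g$ is parabolic). Hence $L^r$ grows polynomially in $r$, so $M^r$ does too, so the spectral radius of $M$ is $1$, and Kronecker's theorem finishes the argument. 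In short: where you try to force $M^{pq}=\pm I$ (false), the paper settles for polynomial growth of $M^r$ (true), which is exactly what's needed. If you want to salvage your idea, the relation $L=\pm X^{pq}$ derived from $\g_\phi(c)=\pm I$ is a clean substitute for the citation to Riley, and plugs directly into the paper's polynomial-growth argument.
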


\begin{proof}  The monodromy $\mu$ of a $(p,q)$-torus knot is periodic of order $pq$. Moreover, $\mu^{pq}$ is inner automorphism by the longitude $l \in \pi'$. Hence, for any generator $a_i$ of the fundamental group of the fiber $S$, as above, we have  
$$\mu^{pqr}(a_i) = l^r \mu(a_i) l^{-r}.$$
Let $M$ be the matrix describing the action of $t$ on $\A_{\g_\phi}^0$, as above. A standard calculation using Fox calculus shows that the $ij$th block of
$$ \Biggl({ {\partial \mu^{pqr}(a_i)}\over{\partial a_j} }^{\g_\phi}\Biggr)$$
is equal to 
$$(I+L +\cdots + L^{r-1}){\partial l \over \partial a_j}^{\g_\phi} + L^r {\partial \mu(a_i)\over \partial a_j}^{\g_\phi}- L^r A_i (L^{-1} + \cdots + L^{-r}) {\partial l \over \partial a_j}^{\g_\phi},$$
where $A_i$ denotes $\mu(a_i)$ and $L=\g_\phi(l)$. The sum can be rewritten as 
$$(I - L^r A_i L^{-r})(I + L + \cdots +L^{r-1}){\partial l \over \partial a_j}^{\g_\phi} + L^r {\partial \mu(a_i) \over \partial a_j}^{\g_\phi}.$$
Recall from Lemma \ref{partials} that $L$ has the form
$$\begin{pmatrix} -I & E \\ 0 & -I \end{pmatrix}.$$ Hence the entries of $L^r$ have polynomial growth as $r$ goes to infinity. Consequently, the entries of $M^r$ have polynomial growth. It follows that the spectral radius of $M$ is $1$. Since the characteristic polynomial, which is
$\D_{k, \g_\phi}(t)$, has integer coefficients, it is a product of cyclotomic polynomials by Kronecker's theorem. 
\end{proof}

\begin{remark} Hirasawa and Murasugi  \cite{hm} have  shown that
the total twisted Alexander polynomial of a $(2q+1,2)$-torus knot $k$
has the form 
$$\D_{k, \g_\Phi}(t) = (t^2+1) (t^{4q+2}+1)^{q-1},$$
a product of cyclotomic polynomials, as predicted by Theorem \ref{torusknot}. \end{remark}

Since the $\g$-twisted Alexander polynomial $\D_{k, \g}(t)$, which is well defined up to a unit in $\C[t^{\pm 1}]$,  divides the total twisted Alexander polynomial $\D_{k, \g_\phi}(t)$, the following is immediate. 

\begin{cor} For any parabolic representation $\g$ of a torus knot $k$, the $\g$-twisted Alexander polynomial is a product of cyclotomic polynomials.  
\end{cor}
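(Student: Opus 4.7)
The plan is to deduce this corollary directly from Theorem~\ref{torusknot} by exploiting the factorization of the total twisted Alexander polynomial $\D_{k, \g_\phi}(t)$ into contributions from the Galois conjugate parabolic representations, as introduced in Section~\ref{examples}.

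First I would invoke Theorem~\ref{torusknot} to obtain that $\D_{k, \g_\phi}(t)$ is a product of cyclotomic polynomials, so every complex root of $\D_{k, \g_\phi}(t)$ is a root of unity. Next, I would recall the observation from Section~\ref{examples} that $\D_{k, \g_\phi}(t)$ equals the product of the polynomials $\D_{k, \g_w}(t)$ as $w$ ranges over the roots of the minimal polynomial $\phi$. In particular, $\D_{k, \g}(t)$ appears as one of these factors, so $\D_{k, \g}(t)$ divides $\D_{k, \g_\phi}(t)$ in $\C[t^{\pm 1}]$; this divisibility is already noted in the paragraph preceding the corollary.

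Consequently, the multiset of roots of $\D_{k, \g}(t)$ is a submultiset of the roots of $\D_{k, \g_\phi}(t)$, and therefore consists entirely of roots of unity. Since $\D_{k, \g}(t)$ is only determined up to multiplication by a unit in $\C[t^{\pm 1}]$, we may write it as such a unit times a product of linear factors $(t-\zeta)$ with each $\zeta$ a root of unity; grouping Galois conjugates then exhibits $\D_{k, \g}(t)$ as a product of cyclotomic polynomials, which is the desired conclusion.

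Because the substantive work has been encapsulated in Theorem~\ref{torusknot} together with the defining identity $\D_{k, \g_\phi}(t) = \prod_w \D_{k, \g_w}(t)$, there is no genuine analytic or algebraic obstacle here. The only mildly delicate point is the notational one of interpreting the phrase ``product of cyclotomic polynomials'' for an object that is only defined up to a unit in $\C[t^{\pm 1}]$; this is handled by passing to roots and invoking the elementary fact that any polynomial in $\C[t]$ whose roots are all roots of unity is, up to a nonzero constant, a product of cyclotomic factors.
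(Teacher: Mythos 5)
Your proposal is correct and follows exactly the paper's route: Theorem \ref{torusknot} gives that $\D_{k,\g_\phi}(t)$ is a product of cyclotomic polynomials, the factorization $\D_{k,\g_\phi}=\prod_w \D_{k,\g_w}$ (noted in Section \ref{examples}) gives divisibility of $\D_{k,\g}$ into $\D_{k,\g_\phi}$, and therefore the zeros of $\D_{k,\g}(t)$ are roots of unity. The paper records this as "immediate" from the divisibility; your extra remark about the up-to-unit normalization and passing to roots simply spells out what the paper leaves tacit, and matches the abstract's phrasing that the zeros are roots of unity.
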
 

With minor modifications, the above arguments apply for any  permutation representation $\g: \pi \to {\rm GL}_N{\mathbb Z}$.

\begin{cor} For any finite-image permutation representation $\g$ of a torus knot $k$, the $\g$-twisted Alexander polynomial is a product of cyclotomic polynomials.  
\end{cor}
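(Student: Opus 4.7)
The plan is to mimic the proof of Theorem~\ref{torusknot} essentially verbatim, the key point being that the finite-image hypothesis trivializes the growth estimate on $L^r$ that in the parabolic case required the unipotent form of $\g_\phi(l)$ from Lemma~\ref{partials}. Let $k$ be a $(p,q)$-torus knot and let $\g:\pi\to {\rm GL}_N\Z$ be a finite-image permutation representation. From the fibered presentation $\pi=\langle x,a_1,\ldots,a_{2g}\mid xa_ix^{-1}=\mu(a_i)\rangle$ used at the start of Section~7, the based $\g$-twisted Alexander module $\A_\g^0$ is a free $\Z$-module of rank $2gN$, and multiplication by $t$ is realized by an integer matrix $M$ whose characteristic polynomial is $\D_{k,\g}(t)$.

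The first step is to reuse the Fox calculus identity established in the proof of Theorem~\ref{torusknot}: since $\mu^{pq}$ is inner automorphism by the longitude $l$, the $ij$-block of the Fox derivative matrix of $\mu^{pqr}$, evaluated under $\g$, equals
$$(I-L^rA_iL^{-r})(I+L+\cdots+L^{r-1})(\partial l/\partial a_j)^\g+L^r(\partial\mu(a_i)/\partial a_j)^\g,$$
with $L=\g(l)$ and $A_i=\g(\mu(a_i))$. This identity is derived at the level of $\Z[\pi]$ and only then pushed through $\g$, so it transfers verbatim. The new input is that $L$ is now a permutation matrix, hence of finite multiplicative order: $\{L^r:r\in\Z\}$ is a finite set, so the displayed expression is bounded in $r$. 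Tracing this back, the entries of $M^{pqr}$ are bounded in $r$, and so $M^n$ is bounded for all $n$. In particular the spectral radius of $M$ is at most $1$.

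To conclude, since $t^{-1}$ also acts on the free $\Z$-module $\A_\g^0$ by an integer matrix, $M\in {\rm GL}_{2gN}(\Z)$, so $\det M=\pm 1$ and $\D_{k,\g}(t)$ is a monic integer polynomial with nonzero constant term. Every root is therefore a nonzero algebraic integer whose Galois conjugates lie on the closed unit disk, and Kronecker's theorem implies each root is a root of unity. Hence $\D_{k,\g}(t)$ is a product of cyclotomic polynomials. The only point deserving care is confirming representation-independence of the Fox calculus identity, which is immediate since it is derived in $\Z[\pi]$ before $\g$ is applied; otherwise the argument is strictly easier than the parabolic case of Theorem~\ref{torusknot}, where polynomial growth of $L^r$ required the unipotent shape given by Lemma~\ref{partials}, whereas here finite order of $L$ delivers bounded growth for free.
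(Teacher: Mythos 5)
Your proposal is correct and is essentially the argument the paper has in mind; the paper merely asserts that Theorem~\ref{torusknot}'s proof ``applies with minor modifications,'' and you have correctly identified the modification: since $\g$ has finite image, $L=\g(l)$ has finite multiplicative order, so the powers $L^r$ (and hence the Fox Jacobians of $\mu^{pqr}$ and the powers $M^n$) are bounded, giving spectral radius $\le 1$ directly rather than via the polynomial-growth estimate that required the unipotent shape of $L$ in Lemma~\ref{partials}. Your added observation that $M\in{\rm GL}_{2gN}(\Z)$ (hence $\det M=\pm1$ and no zero eigenvalues) is a worthwhile detail that makes the appeal to Kronecker's theorem airtight.
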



 \bigskip

\noindent {\sl Address for both authors:} Department of Mathematics and  Statistics, ILB 325, University of South Alabama, Mobile AL  36688 USA \medskip

\noindent {\sl E-mail:} silver@jaguar1.usouthal.edu; swilliam@jaguar1.usouthal.edu

\end{document}